\theoremstyle{plain}
\newtheorem*{theorem*}{Theorem}
\newtheorem{theorem}{Theorem}[section]
\newtheorem*{lemma*}{Lemma}
\newtheorem{lemma}[theorem]{Lemma}
\newtheorem*{proposition*}{Proposition}
\newtheorem{proposition}[theorem]{Proposition}
\newtheorem*{corollary*}{Corollary}
\newtheorem{corollary}[theorem]{Corollary}
\newtheorem*{claim*}{Claim}
\theoremstyle{definition}
\newtheorem*{definition*}{Definition}
\newtheorem*{example*}{Example}
\newtheorem{example}[theorem]{Example}
\newtheorem*{remark*}{Remark}
\newtheorem*{remarks*}{Remarks}
\newtheorem{remark}[theorem]{Remark}
\def\al{\alpha}
\def\be{\beta}
\def\de{\delta}
\def\ve{\varepsilon}
\def\la{\lambda}
\def\si{\sigma}
\def\om{\omega}
\def\C{\mathbb{C}}
\def\CC{\mathbb{C}}
\def\N{\mathbb{N}}
\def\NN{\mathbb{N}}
\def\R{\mathbb{R}}
\def\RR{\mathbb{R}}
\def\Z{\mathbb{Z}}
\def\cC{\mathcal{C}}
\def\cE{\mathcal{E}}
\def\cH{\mathcal{H}}
\def\cL{\mathcal{L}}
\def\cM{\mathcal{M}}
\def\cN{\mathcal{N}}
\def\cR{\mathcal{R}}
\def\cV{\mathcal{V}}
\def\fF{\mathfrak{F}}
\def\fG{\mathfrak{G}}
\def\fc{\mathfrak{c}}
\def\<{\langle}
\def\>{\rangle}
\renewcommand{\preceq}{\preccurlyeq}
\let\on=\operatorname
\def\RR{\mathbb R}
\def\NN{\mathbb N}
\def\A{\;\forall}
\def\E{\;\exists}
\def\oo{\infty}
\title{Ultradifferentiable classes of entire functions}
\author{David Nicolas Nenning and Gerhard Schindl}
\address{D. N.~Nenning: Fakult\"at f\"ur Mathematik, Universit\"at Wien, Oskar-Morgenstern-Platz~1, A-1090 Wien, Austria.}
\email{david.nicolas.nenning@univie.ac.at}
\address{G.~Schindl: Fakult\"at f\"ur Mathematik, Universit\"at Wien, Oskar-Morgenstern-Platz~1, A-1090 Wien, Austria.}
\email{gerhard.schindl@univie.ac.at}
\begin{document}
\begin{abstract}
We study classes of ultradifferentiable functions defined in terms of small weight sequences violating standard growth and regularity requirements. First, we show that such classes can be viewed as weighted spaces of entire functions for which the crucial weight is given by the associated weight function of the so-called conjugate weight sequence. Moreover, we generalize results from M. Markin from the so-called small Gevrey-setting to arbitrary convenient families of (small) sequences and show how the corresponding ultradifferentiable function classes can be used to detect boundedness of normal linear operators on Hilbert spaces (associated to an evolution equation problem). Finally, we study the connection between small sequences and the recent notion of dual sequences introduced in the PhD-thesis of J. Jim\'{e}nez-Garrido.
\end{abstract}

\thanks{D. N. Nenning and G. Schindl are supported by Austrian Science Fund (FWF)-Project P33417-N}
\keywords{Weight sequences, associated weight functions, growth and regularity properties for sequences, weighted spaces of entire functions, boundedness of linear operators}
\subjclass[2020]{26A12, 30D15, 34G10, 46A13, 46E10, 47B02}
\date{\today}

\maketitle

\section{Introduction}
Spaces of ultradifferentiable functions are sub-classes of smooth functions with certain restrictions on the growth of their derivatives. Two classical approaches are commonly considered, either the restrictions are expressed by means of a weight sequence $M=(M_p)_{p\in\NN}$, also called {\itshape Denjoy-Carleman classes} (e.g. see \cite{Komatsu73}), or by means of a weight function $\omega$ also called {\itshape Braun-Meise-Taylor classes}, see \cite{BraunMeiseTaylor90}. In this work we are exclusively dealing with the weight sequence approach.

More precisely (in the one-dimensional case) for each compact set $K$, the set
	\begin{equation}\label{introequ}
		\left\{\frac{f^{(p)}(x)}{h^pM_p}\,\,:\,\, p\in\NN,\,\, x\in K \right\}
	\end{equation}
	is required to be bounded. Naturally, one can consider two different types of spaces: For the {\itshape Roumieu-type} the boundedness of the set in \eqref{introequ} is required for {\itshape some} $h>0$, whereas for the {\itshape Beurling-type} it is required for {\itshape all} $h>0$.\vspace{6pt}

 In the literature standard growth and regularity conditions are assumed for $M$, roughly speaking one is interested in sufficiently fast growing sequences $M$ in order to ensure that $M_p$ is (much) larger than $p!$ for all $p\in\NN$. This is related to the fact that for such sequences the corresponding function spaces are lying between the real-analytic functions and the class of smooth functions. So classes being (strictly) smaller than the spaces corresponding to the sequence $(p!)_{p\in\NN}$ are excluded due to these basic requirements. Moreover, the regularity condition {\itshape log-convexity}, i.e. $(M.1)$ in \cite{Komatsu73}, is more or less standard and even $M\in\hyperlink{LCset}{\mathcal{LC}}$ is basic, see Section \ref{definitionweightsequsect} for the definition of this set. (Formally, if log-convexity for $M$ fails, then one might avoid technical complications by passing to its so-called log-convex minorant.) The analogous notion of {\itshape log-concavity} has not been used in the ultradifferentiable setting.\vspace{6pt}

 The (most) well-known examples are the so-called {\itshape Gevrey sequences} of type $\alpha>0$ with $G^{\alpha}_p:= p!^{\alpha}$ (or equivalently use $M^{\alpha}_p:=p^{p\alpha}$) and this one-parameter family illustrates this behavior when considering different values of the crucial parameter $\alpha$: Usually in the literature one is interested in $\alpha>1$ and the limiting case $\alpha=1$ for the Roumieu-type precisely yields the real-analytic functions. Indices $0<\alpha<1$ give a non-standard setting and the corresponding function classes are tiny (''small Gevrey setting''). At this point let us make aware that we are using for the sequence $M$ the notation ''including the factorial term'' in \eqref{introequ} since in the literature occasionally authors also deal with $\frac{f^{(p)}(x)}{h^pp!M_p}$, e.g. in \cite{Thilliezdivision}, and so $M$ in these works corresponds to the sequence $m$ in the notation used in this paper (see Example \ref{Gevreyexample}). On the other hand the crucial conditions on the sequences appearing in this work illustrate the relevance of the difference between $m$ and $M$, see the assumptions in Section \ref{answersection}.\vspace{6pt}

However, from an abstract mathematical point of view it is interesting and makes sense to study also ultradifferentiable classes defined by non-standard/small sequences and to ask the following questions:

\begin{itemize}
\item[$(i)$] What are the differences between such small classes and spaces defined in terms of ``standard sequences''?

\item[$(ii)$] What is the importance of such small spaces and for which applications can they be useful?

\item[$(iii)$] Can we transfer known results from the standard setting, e.g. the characterization of inclusion relations for function spaces in terms of the corresponding weight sequences, to small spaces?

\item[$(iv)$] Does there exist a close resp. canonical relation between standard and non-standard sequences, or more precisely: Can one construct from a given standard sequence a small one (and vice versa)?
\end{itemize}

The aim of this article is to focus on these problems. Indeed, question $(iv)$ has served as the main motivation and the starting point for writing this work. Very recently, in \cite{dissertationjimenez} we have introduced the notion of the {\itshape dual sequence.} For each given standard $M$, e.g. if  $M\in\hyperlink{LCset}{\mathcal{LC}}$, it is possible to introduce the dual sequence $D$, see Section \ref{dualsection} for precise definitions and citations. In \cite{dissertationjimenez} this notion and the relation between $M$ and $D$ has been exclusively studied by considering growth and regularity indices (which are becoming relevant in the so-called ultraholomorphic setting). The aim is now to study further applications of this new notion and the conjecture is that for ``nice large standard sequences'' $M$ the corresponding dual sequence $D$ is a ``convenient small one'' which allows to study a non-standard setting.\vspace{6pt}

The literature concerning small ultradifferentiable function classes is non-exhaustive and to the best of our knowledge we have only found works by M. Markin treating the small Gevrey setting, see \cite{MarkinI}, \cite{MarkinII}, and \cite{MarkinIII}. More precisely, the goal there has been: Given a Hilbert space $H$ and a normal (unbounded) operator $A$ on $H$, then consider the associated evolution equation
$$y'(t) = A y(t),$$
and one asks the following question: Is a priori known smoothness of all (weak) solutions of this equation sufficient to get that the operator $A$ is bounded? Markin has studied this problem within the small Gevrey setting, i.e. it has been shown that if each weak solution of this evolution equation belongs to some {\itshape small Gevrey class,} then the operator $A$ is bounded. In order to proceed Markin considers (small) Gevrey classes with values in a Hilbert space. Based on this knowledge one can then study if for different small classes Markin's results also apply and if one can generalize resp. strengthen his approach.\vspace{6pt}

The paper is structured as follows: In Section \ref{definitionsection} we introduce the notion of the so-called conjugate sequence $M^{*}$ (see \eqref{Mstardef}), we collect and compare all relevant (non-)standard growth and regularity assumptions on $M$ and $M^{*}$ and define the corresponding function classes.

In Section \ref{ultradiffvsweightedentiresect} we treat question $(i)$ and show that classes defined by small sequences $M$ are isomorphic (as locally convex vector spaces) to weighted spaces of entire functions, see the main result Theorem \ref{thm:main}. Thus we are generalizing the auxiliary result \cite[Lemma 3.1]{MarkinIII} from the small Gevrey setting, see Section \ref{sec:Markinascor} for the comparison. The crucial weight in the weighted entire setting is given in terms of the so-called associated weight $\omega_{M^{*}}$ (see Section \ref{assoweightfctset}) and so expressed in terms of the conjugate sequence $M^{*}$.

As an application of this statement, concerning problem $(iii)$ above, we characterize for such small classes the inclusion relations in terms of the defining (small) sequences, see Theorem \ref{weightholombysequcharact}. This is possible by combining Theorem \ref{thm:main} with recent results for the weighted entire setting obtained by the second author in \cite{inclusion}.

Section \ref{boundednesssection} is dedicated to problem $(ii)$ and the study resp. the generalization of Markin's results. We introduce more general families of appropriate small sequences and extend the sufficiency testing criterion for the boundedness of the operator $A$ to these sets.

Finally, in the Appendix \ref{dualsection} we focus on $(iv)$ and show that dual sequences are serving as examples for non-standard sequences and hence this framework is establishing a close relation between known examples for weight sequences in the literature and small sequences for which the main results in this work can be applied (see Theorem \ref{dual14} and Corollary \ref{dual14cor}).\vspace{6pt}

\textbf{Acknowledgements.} The authors thank the anonymous referee for carefully reading this article and the valuable suggestions which have improved and clarified the presentation of the results. In particular, for the comments which we have summarized in $(ii)$ in Remark \ref{mainthmremark} and in Section \ref{Matuszeskasection}.

\section{Definitions and notations}\label{definitionsection}
\subsection{Basic notation}
We write $\NN:=\{0,1,2,\dots\}$ and $\NN_{>0}:=\{1,2,\dots\}$. Given a multi-index $\alpha=(\alpha_1,\dots,\alpha_d)\in\NN^d$ we set $|\alpha|:=\alpha_1+\dots+\alpha_d$. With $\mathcal{E}$ we denote the class of all smooth functions and with $\cH(\CC)$ the class of entire functions.

\subsection{Weight sequences}\label{definitionweightsequsect}
Let $M=(M_p)_p\in\RR_{>0}^{\NN}$, we introduce also $m=(m_p)_p$ defined by $m_p:=\frac{M_p}{p!}$ and $\mu=(\mu_p)_p$ by $\mu_p:=\frac{M_p}{M_{p-1}}$, $p\ge 1$, $\mu_0:=1$. $M$ is called {\itshape normalized} if $1=M_0\le M_1$ holds true. If $M_0=1$, then $M_p=\prod_{i=1}^p\mu_i$ for all $p\in\NN$.

$M$ is called {\itshape log-convex}, denoted by \hypertarget{lc}{$(\text{lc})$} and abbreviated by $(M.1)$ in \cite{Komatsu73}, if
$$\forall\;p\in\NN_{>0}:\;M_p^2\le M_{p-1} M_{p+1}.$$
This is equivalent to the fact that $\mu$ is non-decreasing. If $M$ is log-convex and normalized, then both $M$ and $p\mapsto(M_p)^{1/p}$ are non-decreasing. In this case we get $M_p\ge 1$ for all $p\ge 0$ and
\begin{equation}\label{mucompare1}
\forall\;p\in\NN_{>0}:\;\;\;(M_p)^{1/p}\le\mu_p.
\end{equation}
Moreover, $M_pM_q\le M_{p+q}$ for all $p,q\in\NN$.

In addition, for $M=(M_p)_p\in\RR_{>0}^{\NN}$ it is known that
\begin{equation}\label{mucompare}
\liminf_{p\rightarrow+\infty}\mu_p\le\liminf_{p\rightarrow+\infty}(M_p)^{1/p}\le\limsup_{p\rightarrow+\infty}(M_p)^{1/p}\le\limsup_{p\rightarrow+\infty}\mu_p.
\end{equation}

For convenience we introduce the following set of sequences:
$$\hypertarget{LCset}{\mathcal{LC}}:=\{M\in\RR_{>0}^{\NN}:\;M\;\text{is normalized, log-convex},\;\lim_{p\rightarrow+\infty}(M_p)^{1/p}=+\infty\}.$$
We see that $M\in\hyperlink{LCset}{\mathcal{LC}}$ if and only if $1=\mu_0\le\mu_1\le\dots$ with $\lim_{p\rightarrow+\infty}\mu_p=+\infty$ (see e.g. \cite[p. 104]{compositionpaper}) and there is a one-to-one correspondence between $M$ and $\mu=(\mu_p)_p$ by taking $M_p:=\prod_{i=0}^p\mu_i$.\vspace{6pt}

$M$ has {\itshape moderate growth}, denoted by \hypertarget{mg}{$(\text{mg})$}, if
$$\exists\;C\ge 1\;\forall\;p,q\in\NN:\;M_{p+q}\le C^{p+q+1} M_p M_q.$$
A weaker condition is {\itshape derivation closedness}, denoted by \hypertarget{dc}{$(\text{dc})$}, if
$$\exists\;A\ge 1\;\forall\;p\in\NN:\;M_{p+1}\le A^{p+1} M_p\Leftrightarrow\mu_{p+1}\le A^{p+1}.$$
It is immediate that both conditions are preserved under the transformation $(M_p)_p\mapsto(M_pp!^s)_p$, $s\in\RR$ arbitrary. In the literature (mg) is also known under {\itshape stability of ultradifferential operators} or $(M.2)$ and (dc) under $(M.2)'$, see \cite{Komatsu73}.\vspace{6pt}

$M$ has \hypertarget{beta1}{$(\beta_1)$} (named after \cite{petzsche}) if
$$\exists\;Q\in\NN_{>0}:\;\liminf_{p\rightarrow+\infty}\frac{\mu_{Qp}}{\mu_p}>Q,$$
and \hypertarget{gamma1}{$(\gamma_1)$} if
$$\sup_{p\in\NN_{>0}}\frac{\mu_p}{p}\sum_{k\ge p}\frac{1}{\mu_k}<+\infty.$$
In \cite[Proposition 1.1]{petzsche} it has been shown that for $M\in\hyperlink{LCset}{\mathcal{LC}}$ both conditions are equivalent and in the literature \hyperlink{gamma1}{$(\gamma_1)$} is also called ''strong non-quasianalyticity condition''. In \cite{Komatsu73} this is denoted by $(M.3)$. (In fact, there $\frac{\mu_p}{p}$ is replaced by $\frac{\mu_p}{p-1}$ for $p\ge 2$ but which is equivalent to having \hyperlink{gamma1}{$(\gamma_1)$}.)

A weaker condition on $M$ is \hypertarget{beta3}{$(\beta_3)$} (named after \cite{dissertation}, see also \cite{BonetMeiseMelikhov07}) which reads as follows:
$$\exists\;Q\in\NN_{>0}:\;\liminf_{p\rightarrow+\infty}\frac{\mu_{Qp}}{\mu_p}>1.$$

For two weight sequences $M=(M_p)_{p\in\NN}$ and $N=(N_p)_{p\in\NN}$ we write $M\le N$ if $M_p\le N_p$ for all $p\in\NN$ and $M\hypertarget{mpreceq}{\preceq}N$ if
$$\sup_{p\in\NN_{>0}}\left(\frac{M_p}{N_p}\right)^{1/p}<+\infty.$$
$M$ and $N$ are called equivalent, denoted by $M\hypertarget{approx}{\approx}N$, if
$$M\hyperlink{mpreceq}{\preceq}N\;\text{and}\;N\hyperlink{mpreceq}{\preceq}M.$$
Finally, we write $M\hypertarget{triangle}{\vartriangleleft}N$, if
$$\lim_{p\rightarrow+\infty}\left(\frac{M_p}{N_p}\right)^{1/p}=0.$$
In the relations above one can replace $M$ and $N$ simultaneously by $m$ and $n$ because $M\hyperlink{mpreceq}{\preceq}N\Leftrightarrow m\hyperlink{mpreceq}{\preceq}n$ and $M\hyperlink{triangle}{\vartriangleleft}N\Leftrightarrow m\hyperlink{triangle}{\vartriangleleft}n$.

For any $\alpha\ge 0$ we set
\[
G^{\alpha}:=(p!^{\alpha})_{p\in\NN}.
\]
So for $\alpha>0$ this denotes the classical {\itshape Gevrey sequence} of index/order $\alpha$.

\subsection{Classes of ultradifferentiable functions}\label{ultradiffclasssection}
	Let $M\in\RR_{>0}^{\NN}$, $U\subseteq\RR^d$ be non-empty open and for $K\subseteq\RR^d$ compact we write $K\subset\subset U$ if $\overline{K}\subseteq U$, i.e., $K$ is in $U$ relatively compact. We introduce now the following spaces of ultradifferentiable function classes.
	First, we define the (local) classes of {\itshape Roumieu-type} by
	$$\mathcal{E}_{\{M\}}(U):=\{f\in\mathcal{E}(U):\;\forall\;K\subset\subset U\;\exists\;h>0:\;\|f\|_{M,K,h}<+\infty\},$$
	and the classes of {\itshape Beurling-type} by
	$$\mathcal{E}_{(M)}(U):=\{f\in\mathcal{E}(U):\;\forall\;K\subset\subset U\;\forall\;h>0:\;\|f\|_{M,K,h}<+\infty\},$$
	where we denote
	\begin{equation*}\label{semi-norm-2}
		\|f\|_{M,K,h}:=\sup_{\alpha\in\NN^d,x\in K}\frac{|f^{(\alpha)}(x)|}{h^{|\alpha|} M_{|\alpha|}}.
	\end{equation*}
	
	For a sufficiently regular compact set $K$ (e.g. with smooth boundary and such that $\overline{K^\circ}=K$)
	$$\mathcal{E}_{M,h}(K):=\{f\in\mathcal{E}(K): \|f\|_{M,K,h}<+\infty\}$$
	is a Banach space and so we have the following topological vector spaces
	$$\mathcal{E}_{\{M\}}(K):=\underset{h>0}{\varinjlim}\;\mathcal{E}_{M,h}(K),$$
	and
	\begin{equation*}
		\mathcal{E}_{\{M\}}(U)=\underset{K\subset\subset U}{\varprojlim}\;\underset{h>0}{\varinjlim}\;\mathcal{E}_{M,h}(K)=\underset{K\subset\subset U}{\varprojlim}\;\mathcal{E}_{\{M\}}(K).
	\end{equation*}
	Similarly, we get
	$$\mathcal{E}_{(M)}(K):=\underset{h>0}{\varprojlim}\;\mathcal{E}_{M,h}(K),$$
	and
	\begin{equation*}
		\mathcal{E}_{(M)}(U)=\underset{K\subset\subset U}{\varprojlim}\;\underset{h>0}{\varprojlim}\;\mathcal{E}_{M,h}(K)=\underset{K\subset\subset U}{\varprojlim}\;\mathcal{E}_{(M)}(K).
	\end{equation*}

The spaces $\mathcal{E}_{\{M\}}(U)$ and $\mathcal{E}_{(M)}(U)$ are endowed with their natural topologies w.r.t. the above representations. We write $\mathcal{E}_{[M]}$ if we mean either $\mathcal{E}_{\{M\}}$ or $\mathcal{E}_{(M)}$ but not mixing the cases. We omit writing the open set $U$ if we do not want to specify the set where the functions are defined and formulate statements on the level of classes.\par
Usually one only considers real or complex valued functions, but we can analogously also define classes with values in Hilbert or even Banach spaces (for simplicity we assume in this case that the domain $U$ is contained in $\R$) by simply using
\[
\|f\|_{M,K,h}:= \sup_{p \in \N, x\in K}\frac{\|f^{(p)}(x)\|}{h^p M_p},
\]
in the respective definition, i.e. only the absolute value of $f^{(p)}(x)$ is replaced by the norm in the Banach space. Observe that the (complex) derivative of a function with values in a Banach space is defined in complete analogy to the complex valued case.
If we want to emphasize that the codomain is a Hilbert (or Banach) space $H$, we write $\cE_{[M]}(U,H)$. In analogy to that also $\cE(U,H)$ shall denote the $H$-valued smooth functions on $U$.

\begin{remark}
    \label{rem:inclusion}
Let $M,N\in\RR_{>0}^{\NN}$, the following is well-known, see e.g. \cite[Prop. 2.12]{compositionpaper}:

\begin{itemize}
\item[$(*)$] The relation $M\hyperlink{triangle}{\vartriangleleft}N$ implies $\mathcal{E}_{\{M\}}\subseteq\mathcal{E}_{(N)}$ with continuous inclusion. Similarly, $M\hyperlink{mpreceq}{\preceq}N$ implies $\mathcal{E}_{[M]}\subseteq\mathcal{E}_{[N]}$  with continuous inclusion.

\item[$(*)$] If $M\in\RR_{>0}^{\NN}$ is log-convex (and normalized) and $\mathcal{E}_{\{M\}}(\RR)\subseteq\mathcal{E}_{(N)}(\RR)$ (as sets) then by the existence of so-called $M$-characteristic functions, see \cite[Lemma 2.9]{compositionpaper}, \cite[Thm. 1]{thilliez} and the proof in \cite[Prop. 3.1.2]{diploma}, we get $M\hyperlink{triangle}{\vartriangleleft}N$ as well.
\end{itemize}
\end{remark}

\subsection{Ultradifferentiable classes of entire functions}
\label{sec:ultraasentire}

We shall tacitly assume that a holomorphic function on (an open subset of) $\C$ may have values in a Hilbert or even Banach space. The main theorems of one variable complex analysis (Cauchy integral formula, power series representation of holomorphic functions,...) hold mutatis mutandis, by virtue of the Hahn-Banach theorem, just as in the complex valued case.

First let us recall that for any open (and connected) set $U \subseteq \R$ the space $\mathcal{E}_{(G^1)}(U, H)$ can be identified with  $\cH(\C, H)$, the class of entire functions and both spaces are isomorphic as Fr\'echet spaces. The isomorphism $\cong$ is given by
\begin{equation*}
    \label{eq:extension}
    E:\cE_{(G^1)}(U,H) \rightarrow \cH(\C,H),\quad f \mapsto E(f):=\sum_{k=0}^{+\oo}\frac{f^{(k)}(x_0)}{k!}z^k,
\end{equation*}
where $x_0$ is any fixed point in $U$. The inverse is given by restriction to $U$, and its continuity follows easily from the Cauchy inequalities.

We apply the observation from Remark \ref{rem:inclusion}  to $N\equiv G^1$.

\begin{lemma}\label{smallclasseslemma}
Let $M\in\RR_{>0}^{\NN}$ be given.
\begin{itemize}
\item[$(i)$] If $\lim_{p\rightarrow+\infty}(m_p)^{1/p}=0$, then $\mathcal{E}_{\{M\}}\subseteq\mathcal{E}_{(G^1)} (\cong \cH(\C))$ with continuous inclusion.

\item[$(ii)$] Let $M$ be log-convex and normalized. Assume that
\[
\mathcal{E}_{\{M\}}(\RR)\subseteq \mathcal{E}_{(G^1)}(\RR) (\cong \cH(\C))
\]
holds (as sets), then $\lim_{p\rightarrow+\infty}(m_p)^{1/p}=0$ follows.
In particular, this implication holds for any $M\in\hyperlink{LCset}{\mathcal{LC}}$.
\end{itemize}
\end{lemma}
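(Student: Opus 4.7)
The statement translates directly between the condition $\lim_p (m_p)^{1/p}=0$ on the sequence $m_p=M_p/p!$ and the relation $M\vartriangleleft G^1$, since by definition
\[
M\vartriangleleft G^1 \iff \lim_{p\to\infty}\left(\frac{M_p}{p!}\right)^{1/p}=0 \iff \lim_{p\to\infty}(m_p)^{1/p}=0.
\]
Once this reformulation is in hand, both parts reduce to the two bullets of Remark \ref{rem:inclusion} applied with $N\equiv G^1$, together with the identification $\mathcal{E}_{(G^1)}\cong\mathcal{H}(\C)$ recorded in Section \ref{sec:ultraasentire}.

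For part $(i)$, the plan is to start from the hypothesis, rewrite it as $M\vartriangleleft G^1$, and invoke the first bullet of Remark \ref{rem:inclusion}, which gives $\mathcal{E}_{\{M\}}\subseteq\mathcal{E}_{(G^1)}$ with continuous inclusion. Composing with the topological isomorphism $E$ of Section \ref{sec:ultraasentire} yields the (continuous) inclusion into $\mathcal{H}(\C)$. No estimate beyond the one encapsulated in Remark \ref{rem:inclusion} is needed.

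For part $(ii)$, the hypothesis $\mathcal{E}_{\{M\}}(\R)\subseteq\mathcal{E}_{(G^1)}(\R)$ as sets, combined with $M$ being log-convex and normalized, is exactly the setup of the second bullet of Remark \ref{rem:inclusion}, now with $N\equiv G^1$. That bullet concludes $M\vartriangleleft G^1$, which, reinterpreted via the equivalence recalled above, is precisely $\lim_{p\to\infty}(m_p)^{1/p}=0$. For the final ``in particular'' clause, one only needs to remark that $\mathcal{LC}$ consists of normalized log-convex sequences (with the additional property $\lim_p (M_p)^{1/p}=+\infty$), so the previous implication applies without modification to any $M\in\mathcal{LC}$.

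The only genuine content lies in Remark \ref{rem:inclusion}, so the present lemma is really a repackaging for the specific sequence $N=G^1$; the main thing to be careful about is to verify the trivial but crucial identity $M\vartriangleleft G^1\Leftrightarrow\lim_p (m_p)^{1/p}=0$ and to make sure that the ``as sets'' hypothesis in $(ii)$ matches the hypothesis of the second bullet of the remark. No further analytic obstacle arises.
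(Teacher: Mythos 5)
Your proposal is correct and matches the paper's approach exactly: the paper gives no separate proof, stating only that the lemma is Remark \ref{rem:inclusion} applied to $N\equiv G^1$, which together with the identity $M\hyperlink{triangle}{\vartriangleleft}G^1\Leftrightarrow\lim_{p\rightarrow+\infty}(m_p)^{1/p}=0$ is precisely your argument.
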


Moreover, in the situation of Lemma \ref{smallclasseslemma} the inclusion always has to be strict.
Thus spaces $\cE_{[M]}$ for sequences with $\lim_{p\rightarrow+\infty}m_p^{1/p}=0$ form classes of entire functions. Subsequently, we show that those spaces are weighted classes of entire functions and the weight is given by the \emph{associated weight function} of the \emph{conjugate weight sequence}. We thoroughly define and investigate those terms in the following sections. We remark that the definition of the conjugate sequence has been inspired by the Gevrey case treated by M. Markin, see Example \ref{Gevreyexample} and Section \ref{sec:Markinascor}.

\subsection{Conjugate weight sequence}\label{conjugatesect}
Let $M\in\RR_{>0}^{\NN}$, then we define the {\itshape conjugate sequence} $M^{*}=(M^{*}_p)_{p\in\NN}$ by
\begin{equation}\label{Mstardef}
M^{*}_p:=\frac{p!}{M_p}=\frac{1}{m_p},\;\;\;p\in\NN,
\end{equation}
i.e. $M^{*}:=m^{-1}$ for short. Hence, for $p\ge 1$ the quotients $\mu^{*}=(\mu^{*}_p)_p$ are given by
\begin{equation}\label{Mstardefquot}
\mu^{*}_p:=\frac{M^{*}_p}{M^{*}_{p-1}}=\frac{m_{p-1}}{m_p}=\frac{p!M_{p-1}}{(p-1)!M_p}=\frac{p}{\mu_p},
\end{equation}
and we set $\mu^{*}_0:=1$. By these formulas it is immediate that there is a one-to-one correspondence between $M$ and $M^{*}$.

\subsection{Properties of conjugate weight sequences}
\label{sec:propertiesconjugate}
We summarize some immediate consequences for $M^{*}$. Let $M,N\in\RR_{>0}^{\NN}$ be given.

\begin{itemize}
\item[$(i)$] First, we immediately have
$$\forall\;p\in\NN:\;\;\;M^{**}_p=M_p,\hspace{15pt}M^{*}_p\cdot M_p=p!,$$
i.e.
$$M^{**}\equiv M,\hspace{30pt}M^{*}\cdot M\equiv G^1.$$
Moreover, (see also the subsequent Lemma \ref{Mstarfixpoint})
$$M^{*}\hyperlink{mpreceq}{\preceq}M\Longleftrightarrow G^{1/2}\hyperlink{mpreceq}{\preceq} M,\hspace{15pt}M\hyperlink{mpreceq}{\preceq}M^{*}\Longleftrightarrow M\hyperlink{mpreceq}{\preceq}G^{1/2},$$
and alternatively the relation \hyperlink{mpreceq}{$\preceq$} can be replaced by $\le$. We also get $M^{*}_0=M_0^{-1}$, i.e. $M^{*}$ is normalized if and only if $1=M_0\ge M_1$.

\item[$(ii)$] $M\hyperlink{mpreceq}{\preceq}N$ holds if and only if $N^{*}\hyperlink{mpreceq}{\preceq}M^{*}$ and so $M\hyperlink{approx}{\approx}N$ if and only if $M^{*}\hyperlink{approx}{\approx}N^{*}$.

\item[$(iii)$] We get the following:

\begin{itemize}
\item[$(*)$] $\lim_{p\rightarrow+\infty}(M^{*}_p)^{1/p}=+\infty$ holds if and only if $\lim_{p\rightarrow+\infty}(m_p)^{1/p}=0$ and this implies $\mathcal{E}_{\{M\}}\subseteq\mathcal{E}_{(G^1)}$ (with strict inclusion). If in addition $M$ is log-convex (and normalized) then all three assertions are equivalent, see Lemma \ref{smallclasseslemma}.

\item[$(*)$] If $\lim_{p\rightarrow+\infty}(M_p)^{1/p}=+\infty$, then by $\mu^{*}_p/p=\frac{1}{\mu_p}$, \eqref{mucompare} and Stirling's formula we get both $\lim_{p\rightarrow+\infty}\mu^{*}_p/p=0$ and $\lim_{p\rightarrow+\infty}(m^{*}_p)^{1/p}=0$.

\item[$(*)$] $\lim_{p\rightarrow+\infty}(m^{*}_p)^{1/p}=+\infty$ holds if and only if $\lim_{p\rightarrow+\infty}(M_p)^{1/p}=0$.
\end{itemize}

\item[$(iv)$] $M^{*}$ is log-convex, i.e. $\mu^{*}_{p+1}\ge\mu^{*}_p$ for all $p\in\NN_{>0}$, if and only if $m$ is {\itshape log-concave}, i.e.
    \begin{equation}\label{logconcave}
    \forall\;p\in\NN_{>0}:\;\;\;m_p^2\ge m_{p-1}m_{p+1}\Longleftrightarrow\mu^{*}_{p+1}\ge\mu^{*}_p,
    \end{equation}
   which in turn is equivalent to the map $p\mapsto\frac{\mu_p}{p}$ being non-increasing.

   Analogously as in \cite[Lemma 2.0.4]{diploma} we get: If a sequence $S\in\RR_{>0}^{\NN}$ is log-concave and satisfies $S_0=1$, then the mapping $p\mapsto(S_p)^{1/p}$ is non-increasing.

   Consequently, if $M^{*}$ is log-convex and if $1=M^{*}_0=m_0=M_0$, then $p\mapsto(m_p)^{1/p}$ is non-increasing.

\item[$(v)$] If $M$ is log-convex (and having $M_0=1$), then $M^{*}$ has \hyperlink{mg}{$(\on{mg})$}: In this case by \cite[Lemma 2.0.6]{diploma} for all $p,q\in\NN$ we get $M_pM_q\le M_{p+q}\Leftrightarrow m_pm_q\le\frac{(p+q)!}{p!q!}m_{p+q}$ and so $m_pm_q\le 2^{p+q}m_{p+q}$. Hence $M^{*}_{p+q}\le 2^{p+q}M^{*}_pM^{*}_q$ holds true.

\item[$(vi)$] $M^{*}$ has \hyperlink{dc}{$(\on{dc})$} if and only if $\mu^{*}_p\le A^p\Leftrightarrow\frac{p}{\mu_p}\le A^p$, so if and only if
    \begin{equation*}\label{dualderivation}
    \exists\;A\ge 1\;\forall\;p\in\NN:\;\;\;\mu_p\ge\frac{p}{A^p},
    \end{equation*}
    which can be considered as ``dual derivation closedness''. Note that this property is preserved under the mapping $(M_p)_p\mapsto(M_pp!^s)_p$, $s\in\RR$ arbitrary, and it is mild: $\liminf_{p\rightarrow+\infty}\mu_p/p>0$ is sufficient to conclude.

\item[$(vii)$] $M^{*}$ has \hyperlink{beta1}{$(\beta_1)$}, i.e. $\liminf_{p\rightarrow+\infty}\frac{\mu^{*}_{Qp}}{\mu^{*}_p}>Q$ for some $Q\in\NN_{\ge 2}$, if and only if $\liminf_{p\rightarrow+\infty}\frac{\mu_{p}}{\mu_{Qp}}>1$; similarly $M^{*}$ has \hyperlink{beta3}{$(\beta_3)$} if and only if $\liminf_{p\rightarrow+\infty}\frac{\mu_p}{\mu_{Qp}}>\frac{1}{Q}$.
\end{itemize}

Using those insights, we may conclude the following.

\begin{lemma}\label{MstarLC}
Let $M\in\RR_{>0}^{\NN}$ be given with $1=M_0\ge M_1$ and let $M^{*}$ be the conjugate sequence defined via \eqref{Mstardef}. Then:
\begin{itemize}
\item[$(a)$] $M^{*}\in\hyperlink{LCset}{\mathcal{LC}}$ if and only if $m$ is log-concave and $\lim_{p\rightarrow+\infty}(m_p)^{1/p}=0$.

\item[$(b)$] $M^{*}\in\hyperlink{LCset}{\mathcal{LC}}$ implies $\mathcal{E}_{\{M\}}\subseteq\mathcal{E}_{(G^1)}$ with strict inclusion.

\item[$(c)$] If in addition $M$ is log-convex with $1=M_0=M_1$, then the inclusion $\mathcal{E}_{\{M\}}(\R)\subseteq\mathcal{E}_{(G^1)}(\R)$ gives $\lim_{p\rightarrow+\infty}(M^{*}_p)^{1/p}=+\infty$. Moreover, $M^{*}$ has moderate growth.
\end{itemize}
\end{lemma}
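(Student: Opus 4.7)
The plan is to dispatch all three assertions essentially by collecting the observations already recorded in Section \ref{sec:propertiesconjugate} together with Lemma \ref{smallclasseslemma}, since the hypothesis $1=M_0\ge M_1$ is tailor-made to secure the normalization of $M^{*}$ (we get $M^{*}_0=1/M_0=1$ and $M^{*}_1=1/M_1\ge 1$, hence $1=M^{*}_0\le M^{*}_1$, so normalization of $M^{*}$ is automatic and need not be revisited).

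For part $(a)$, I would argue both directions simultaneously. Log-convexity of $M^{*}$ is, by item $(iv)$, literally the same as log-concavity of $m$ (both are rephrasings of $\mu^{*}_{p+1}\ge\mu^{*}_p$). The growth condition $\lim_{p\to+\infty}(M^{*}_p)^{1/p}=+\infty$ that occurs in the definition of $\mathcal{LC}$ is, by the first bullet of item $(iii)$, equivalent to $\lim_{p\to+\infty}(m_p)^{1/p}=0$. Since normalization of $M^{*}$ is free from the standing hypothesis, $(a)$ follows.

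For part $(b)$, assuming $M^{*}\in\mathcal{LC}$, part $(a)$ delivers $\lim_{p\to+\infty}(m_p)^{1/p}=0$, so Lemma \ref{smallclasseslemma}$(i)$ yields the continuous inclusion $\mathcal{E}_{\{M\}}\subseteq\mathcal{E}_{(G^1)}$, and the remark following Lemma \ref{smallclasseslemma} (``the inclusion always has to be strict'') gives strictness.

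For part $(c)$, the additional hypotheses make $M$ log-convex and normalized in the sense of Section \ref{definitionweightsequsect}. Hence Lemma \ref{smallclasseslemma}$(ii)$ applies and converts the set-theoretic inclusion $\mathcal{E}_{\{M\}}(\R)\subseteq\mathcal{E}_{(G^1)}(\R)$ into $\lim_{p\to+\infty}(m_p)^{1/p}=0$, which by the first bullet of $(iii)$ is equivalent to $\lim_{p\to+\infty}(M^{*}_p)^{1/p}=+\infty$. Finally, the moderate growth of $M^{*}$ is exactly the content of item $(v)$ once $M$ is log-convex with $M_0=1$, because the computation there shows $M^{*}_{p+q}\le 2^{p+q}M^{*}_pM^{*}_q$ unconditionally.

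Since each claim is a direct reading of one of the bulleted properties together with Lemma \ref{smallclasseslemma}, there is no genuine obstacle; the only thing to be careful about is checking that the standing assumption $1=M_0\ge M_1$ (resp.\ $1=M_0=M_1$ in $(c)$) is compatible with the normalization requirements hidden both in $\mathcal{LC}$ and in Lemma \ref{smallclasseslemma}$(ii)$, which is the bookkeeping carried out in the opening paragraph of the proof.
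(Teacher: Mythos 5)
Your proof is correct and matches the paper's intent exactly: the paper gives no separate proof of this lemma, introducing it with ``Using those insights, we may conclude the following,'' i.e.\ precisely the assembly of items $(iii)$, $(iv)$, $(v)$ of Section \ref{sec:propertiesconjugate} with Lemma \ref{smallclasseslemma} that you carry out. Your opening observation that $1=M_0\ge M_1$ forces $1=M^{*}_0\le M^{*}_1$ is the one piece of bookkeeping the paper leaves implicit, and you handle it correctly.
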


\begin{remark}\label{almoststuff}
Let $M\in\RR_{>0}^{\NN}$ be given and we comment on the log-concavity and related conditions (for the sequence $m$):

\begin{itemize}
\item[$(a)$] If $m$ is not log-concave but satisfies
$$\exists\;H\ge 1\;\forall\;1\le p\le q:\;\;\;\frac{\mu_q}{q}\le H\frac{\mu_p}{p},$$
i.e. the sequence $(\mu_p/p)_{p\in\NN_{>0}}$ is {\itshape almost decreasing,} then the sequence $L$ defined in terms of the corresponding quotient sequence $\lambda=(\lambda_p)_{p\in\NN}$ given by
\begin{equation}\label{almoststuffequ}
\lambda_p:=H^{-1}p\sup_{q\ge p}\frac{\mu_q}{q},\;\;\;p\ge 1,\hspace{20pt}\lambda_0:=1,
\end{equation}
satisfies
\begin{equation}\label{almoststuffequ1}
\forall\;p\ge 1:\;\;\;H^{-1}\frac{\mu_p}{p}\le\frac{\lambda_p}{p}\le\frac{\mu_p}{p}.
\end{equation}
Then we get:
\begin{itemize}
\item[$(i)$] $L$ and $M$ are equivalent and so $L^{*}$ is equivalent to $M^{*}$, too.

\item[$(ii)$] $p\mapsto\frac{\lambda_p}{p}$ is non-increasing, i.e. $l$ is log-concave, and so $L^{*}$ is log-convex.

\item[$(iii)$] If $1=M_0\ge M_1$, i.e. if $\mu_1\le 1$, then $1=L_0\ge L_1$ is valid since $L_1=\lambda_1\le\mu_1\le 1$ holds true. Thus $L^{*}$ is normalized.

\item[$(iv)$] $\lim_{p\rightarrow+\infty}(m_p)^{1/p}=0$ if and only if $\lim_{p\rightarrow+\infty}(l_p)^{1/p}=0$ (with $l_p:=L_p/p!$).

\item[$(v)$] Finally, if $M$ is log-convex, then $L$ shares this property: We have $\lambda_p\le\lambda_{p+1}$ if and only if $p\sup_{q\ge p}\frac{\mu_q}{q}\le(p+1)\sup_{q\ge p+1}\frac{\mu_q}{q}$ for all $p\ge 1$. When $p\ge 1$ is fixed, then clearly $p\frac{\mu_q}{q}\le(p+1)\frac{\mu_q}{q}$ for all $q\ge p+1$. If $q=p$, then
$$p\frac{\mu_q}{q}=\mu_p\le\mu_{p+1}=(p+1)\frac{\mu_{p+1}}{p+1}\le(p+1)\sup_{q\ge p+1}\frac{\mu_q}{q},$$
and so the desired inequality is verified.
\end{itemize}
Summarizing, if $M\in\RR_{>0}^{\NN}$ satisfies $1=M_0\ge M_1$ and $\lim_{p\rightarrow+\infty}(m_p)^{1/p}=0$, then $L^{*}\in\hyperlink{LCset}{\mathcal{LC}}$, see $(a)$ in Lemma \ref{MstarLC}. If $M$ is in addition log-convex, then $L$ has this property too.

The definition \eqref{almoststuffequ} is motivated by \cite[Lemma 8]{whitneyextensionmixedweightfunctionII} and \cite[Prop. 4.15]{JimenezGarridoSanz}.

\item[$(b)$] If $m$ is log-concave, then for any $s\ge 0$ also the sequence $(m_p/p!^s)_{p\in\NN}$ is log-concave because the mapping $p\mapsto\frac{\mu_p}{p^s}$ is still non-increasing (see \eqref{logconcave}). However, for the sequence ($p!^sm_p)_{p\in\NN}$ this is not clear in general.
\end{itemize}
\end{remark}

\begin{example}\label{Gevreyexample}
Let $M\equiv G^s$ for some $0\le s<1$, see \cite{MarkinIII}. (In fact in \cite{MarkinIII} instead of $G^s$ the sequence $(p^{ps})_{p\in\NN}$ is treated but which is equivalent to $G^s$ by Stirling's formula.) Then $m\equiv G^{s-1}$ with $-1\le s-1<0$ and so $m$ corresponds to a Gevrey-sequence with negative index. We get $\lim_{p\rightarrow+\infty}(m_p)^{1/p}=0$ and $m$ is log-concave. Moreover $M^{*}\equiv G^{1-s}$ and so clearly $M^{*}\in\hyperlink{LCset}{\mathcal{LC}}$.
\end{example}

In particular, if $s=\frac{1}{2}$ then $(G^{\frac{1}{2}})^{*}=G^{\frac{1}{2}}$ and we prove the following statement which underlines the importance of $G^{\frac{1}{2}}$ (up to equivalence of sequences) w.r.t. the action $M\mapsto M^{*}$.

\begin{lemma}\label{Mstarfixpoint}
Let $M\in\RR_{>0}^{\NN}$ be given. Then the following are equivalent:
\begin{itemize}
\item[$(i)$] We have $M\hyperlink{mpreceq}{\preceq}M^{*}$.

\item[$(ii)$] We have
$$\exists\;C,h\ge 1\;\forall\;p\in\NN:\;\;\;M_p^2\le Ch^pp!,$$
i.e. $M\hyperlink{mpreceq}{\preceq}G^{1/2}$.

\item[$(iii)$] We have $G^{1/2}\hyperlink{mpreceq}{\preceq}M^{*}$.
\end{itemize}
The analogous equivalences are valid if $M^{*}\hyperlink{mpreceq}{\preceq}M$ resp. if relation \hyperlink{mpreceq}{$\preceq$} is replaced by $\le$. Thus $M\hyperlink{approx}{\approx}M^{*}$ if and only if $M\hyperlink{approx}{\approx}G^{1/2}$ and $M=M^{*}$ if and only if $M=G^{1/2}=M^{*}$.

In particular, $G^{1/2}=(G^{1/2})^{*}$ holds true.
\end{lemma}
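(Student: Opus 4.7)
The key observation is the identity $M_p \cdot M^{*}_p = p!$, which we can rewrite as $G^{1/2}_p = \sqrt{p!} = \sqrt{M_p \cdot M^{*}_p}$. Thus $G^{1/2}$ is nothing but the geometric mean of $M$ and $M^{*}$, and the three conditions (i)–(iii) are manifestations of the single assertion ``$M$ and $G^{1/2}$ are comparable in the $\preceq$-sense''. My plan is therefore purely computational: unfold the definitions and check that the exponents match up.

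First I would prove (i) $\Leftrightarrow$ (ii). Using $M^{*}_p = p!/M_p$, we get
\[
\Bigl(\tfrac{M_p}{M^{*}_p}\Bigr)^{1/p} = \Bigl(\tfrac{M_p^2}{p!}\Bigr)^{1/p} = \Bigl(\tfrac{M_p}{G^{1/2}_p}\Bigr)^{2/p},
\]
so the supremum over $p$ of the left-hand side is finite if and only if the supremum of $(M_p/G^{1/2}_p)^{1/p}$ is finite, which is precisely $M \preceq G^{1/2}$; and the latter is clearly equivalent to the existence of $C,h\ge 1$ with $M_p^2 \le C h^p p!$. Next I would prove (ii) $\Leftrightarrow$ (iii) by the same bookkeeping:
\[
\Bigl(\tfrac{G^{1/2}_p}{M^{*}_p}\Bigr)^{1/p} = \Bigl(\tfrac{(p!)^{1/2} M_p}{p!}\Bigr)^{1/p} = \Bigl(\tfrac{M_p}{G^{1/2}_p}\Bigr)^{1/p},
\]
so again $G^{1/2} \preceq M^{*}$ is the same as $M \preceq G^{1/2}$. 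The $\le$-variants of (i)–(iii) follow by the exact same algebra with $C=h=1$, giving each time the single inequality $M_p^2 \le p!$.

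For the ``analogous equivalences'' when $M^{*} \preceq M$, I would invoke the involution $M^{**}=M$ noted in Section \ref{sec:propertiesconjugate}(i): applying the already-proved equivalences to the sequence $M^{*}$ (in the role of $M$) yields $M^{*} \preceq M \Leftrightarrow M^{*} \preceq G^{1/2} \Leftrightarrow G^{1/2} \preceq M$, and likewise for $\le$. Combining both directions gives $M \approx M^{*} \Leftrightarrow M \approx G^{1/2}$ immediately. Finally, for the equality case, $M = M^{*}$ forces $M_p^2 = p!$ for every $p$, so $M_p = (p!)^{1/2} = G^{1/2}_p$; and a direct check shows $(G^{1/2})^{*}_p = p!/(p!)^{1/2} = (p!)^{1/2} = G^{1/2}_p$, establishing the final ``in particular'' claim.

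No real obstacle arises: the entire proof is a three-line manipulation of the identity $M_p M^{*}_p = p!$ together with monotonicity of $x\mapsto x^{1/2}$ on $[0,\infty)$. The only thing to be careful about is matching the constants $C$ and $h$ when passing between the squared and unsquared forms (one has to allow $\sqrt{C}$ and $\sqrt{h}$), but this is automatic in the $\preceq$-formulation since only boundedness of the $p$-th-root suprema matters.
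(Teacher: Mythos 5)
Your proposal is correct and follows the same route as the paper, whose proof consists of the single remark that the equivalences follow immediately from the definition of $M^{*}$ in \eqref{Mstardef}; you have simply written out the elementary exponent bookkeeping that the paper leaves implicit. All the computations (including the use of $M^{**}=M$ for the reversed relations and the verification of $(G^{1/2})^{*}=G^{1/2}$) check out.
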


\begin{proof}
The equivalences follow immediately from the definition of $M^{*}$ in \eqref{Mstardef}.
\end{proof}

\subsection{Associated weight function}\label{assoweightfctset}
Let $M\in\RR_{>0}^{\NN}$ (with $M_0=1$), then the {\itshape associated function} $\omega_M: \RR_{\ge 0}\rightarrow\RR\cup\{+\infty\}$ is defined by
\begin{equation*}\label{assofunc}
\omega_M(t):=\sup_{p\in\NN}\log\left(\frac{t^p}{M_p}\right)\;\;\;\text{for}\;t>0,\hspace{30pt}\omega_M(0):=0.
\end{equation*}
For an abstract introduction of the associated function we refer to \cite[Chapitre I]{mandelbrojtbook}, see also \cite[Definition 3.1]{Komatsu73}. If $\liminf_{p\rightarrow+\infty}(M_p)^{1/p}>0$, then $\omega_M(t)=0$ for sufficiently small $t$, since $\log\left(\frac{t^p}{M_p}\right)<0\Leftrightarrow t<(M_p)^{1/p}$ holds for all $p\in\NN_{>0}$. Moreover under this assumption $t\mapsto\omega_M(t)$ is a continuous non-decreasing function, which is convex in the variable $\log(t)$ and tends faster to infinity than any $\log(t^p)$, $p\ge 1$, as $t\rightarrow+\infty$. $\lim_{p\rightarrow+\infty}(M_p)^{1/p}=+\infty$ implies that $\omega_M(t)<+\infty$ for each $t>0$ and which shall be considered as a basic assumption for defining $\omega_M$.\vspace{6pt}

Given $M\in\hyperlink{LCset}{\mathcal{LC}}$, then by \cite[1.8 III]{mandelbrojtbook} we get that $\omega_M(t)=0$ on $[0,\mu_1]$.\vspace{6pt}

Finally note that for $M\in\hyperlink{LCset}{\mathcal{LC}}$ we have $\lim_{p\rightarrow+\infty}\mu_p=+\infty$, see e.g. \cite[p. 104]{compositionpaper}.



\section{Ultradifferentiable classes as weighted spaces of entire functions}\label{ultradiffvsweightedentiresect}

In Section \ref{sec:ultraasentire}, we saw that ultradifferentiable classes $\cE_{[M]}$ with $\lim_{p\rightarrow+\infty}m_p^{1/p}=0$ are classes of entire functions. Now we go further and identify those classes with weighted spaces of entire functions, where the weight is given by the associated weight function of the conjugate weight sequence $M^{*}$. To this end, let us first recall some notation already introduced in \cite{inclusion} (to be precise, in \cite{inclusion} the  weighted spaces of entire functions have only been defined for the codomain $\C$, but everything can be done completely analogously for $H$ instead of $\C$): Let $H$ be a Hilbert space and let $v:[0,+\oo) \rightarrow (0,+\oo)$ be a weight function, i.e. $v$ is
\begin{itemize}
\item[$(*)$] continuous,

\item[$(*)$] non-increasing and

\item[$(*)$] rapidly decreasing, i.e. $\lim_{t\rightarrow+\infty}t^kv(t)=0$ for all $k\ge 0$.
\end{itemize}
Then introduce the space
\[
\cH^\infty_v(\C,H):=\{ f \in \cH(\C,H): \|f\|_v:= \sup_{z \in \C} \|f(z)\|v(|z|) < +\infty\}.
\]
We shall assume w.l.o.g. that $v$ is \emph{normalized}, i.e. $v(t)=1$ for $t \in [0,1]$ (if this is not the case one can always switch to another normalized weight $w$ with $\cH^\infty_v(\C,H) = \cH^\infty_w(\C,H)$).\par
In the next step we consider {\itshape weight systems,} see \cite[Sect. 2.2]{inclusion} for more details. For a non-increasing sequence of weights $\underline{\cV}=(v_n)_{n\in\NN_{>0}}$, i.e. $v_n\ge v_{n+1}$ for all $n$, we define the (LB)-space
\[
\cH_{\underline{\cV}}^\oo(\C,H):=\varinjlim_{n\in \N_{>0}} \cH^\infty_{v_n}(\C,H),
\]
and for a non-decreasing sequence of weights $\overline{\cV}=(v_n)_{n\in\NN_{>0}}$, i.e. $v_n\le v_{n+1}$ for all $n$, we define the Fr\'echet space
\[
\cH_{\overline{\cV}}^\oo(\C,H):=\varprojlim_{n\in \N_{>0}} \cH^\infty_{v_n}(\C,H).
\]
\begin{remark}
    In \cite{inclusion}, the spaces are denoted by $H^\infty_v(\C)$ instead of $\cH^\infty_v(\C,\C)$. We use $\cH$ in order to avoid any confusion with the Hilbert space $H$. In addition,  $\cH^\oo_v(\C)$ shall denote $\cH^\oo_v(\C,\C)$.
\end{remark}

The following Lemma can be used to infer statements for $\cH^\oo_v(\C,H)$ from the respective statements for $\cH^\oo_v(\C)$.

\begin{lemma}
    \label{lem:liftlemma}
    Let $H$ be a (complex) Hilbert space and $v$ be a weight. Then
    \[
    f \in \cH^\oo_v(\C,H) ~\Leftrightarrow z \mapsto \langle f(z), y \rangle \in \cH^\oo_v(\C) \text{ for all } y \in H.
    \]
\end{lemma}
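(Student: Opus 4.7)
The forward implication is routine: if $f\in\cH^\oo_v(\C,H)$, then composing the power series of $f$ around any point with the continuous linear functional $\<\cdot,y\>$ shows that $z\mapsto\<f(z),y\>$ is entire, and Cauchy--Schwarz gives
\[
|\<f(z),y\>|\,v(|z|)\le\|f(z)\|\,\|y\|\,v(|z|)\le\|f\|_v\|y\|,
\]
so this scalar function lies in $\cH^\oo_v(\C)$ for every $y\in H$.

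The backward direction is the substantive part, and the plan is to reduce it to the uniform boundedness principle. First I would verify that $f$ is in fact a (strongly) entire $H$-valued function: the assumption that $\<f(\cdot),y\>$ is entire for every $y\in H$ means $f$ is weakly holomorphic, and by the standard Dunford theorem (weak holomorphy implies strong holomorphy for Banach-space-valued functions; this uses only Cauchy's formula for scalar holomorphic functions plus the Hahn--Banach theorem) $f\in\cH(\C,H)$. For the norm bound, for each $z\in\C$ define the continuous linear functional
\[
L_z:H\to\C,\qquad L_z(y):=v(|z|)\,\<y,f(z)\>,
\]
whose operator norm is $\|L_z\|=v(|z|)\,\|f(z)\|$ (either by Riesz representation or by testing against $y=f(z)/\|f(z)\|$ when $f(z)\ne 0$). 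The hypothesis gives, for each fixed $y\in H$,
\[
\sup_{z\in\C}|L_z(y)|=\sup_{z\in\C}v(|z|)\,|\<f(z),y\>|=\|\<f(\cdot),y\>\|_v<+\infty,
\]
so the family $\{L_z\}_{z\in\C}$ is pointwise bounded on the Banach space $H$. The Banach--Steinhaus theorem then yields $\sup_{z\in\C}\|L_z\|<+\infty$, i.e. $\|f\|_v<+\infty$, proving $f\in\cH^\oo_v(\C,H)$.

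The only step requiring care is the passage from weak to strong holomorphy needed to justify that $f\in\cH(\C,H)$ in the first place; once that is in hand the uniform boundedness argument is completely mechanical. I would phrase this either by appealing to the remark at the beginning of Section~\ref{sec:ultraasentire} (which already grants that the main theorems of one-variable complex analysis extend to the Banach-space-valued setting via Hahn--Banach), or by spelling out Dunford's theorem in a single line.
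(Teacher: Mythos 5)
Your proof is correct and follows essentially the same route as the paper: the backward direction in both cases reduces to the fact that a weakly bounded set in $H$ is norm bounded, i.e.\ the uniform boundedness principle, which you simply spell out via the functionals $L_z$ where the paper cites it in one line. The only difference is that you additionally justify strong holomorphy of $f$ via Dunford's theorem, whereas the paper sidesteps this by taking $f\in\cH(\C,H)$ as given; this is a harmless (indeed slightly more careful) addition.
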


\begin{proof}
   For the non-trivial part, take some $f \in \cH(\C,H)$ such that $|\langle f(z), y \rangle| \le C_y v(|z|)$ for every $y \in H$. Then this just means that $\{\frac{f(z)}{v(|z|)}:~z \in \C\}$ is weakly bounded (in $H$) which implies boundedness and this just means that $f \in \cH_v(\C,H)$.
\end{proof}

\begin{remark}
    Of course the same argument holds for a family of weights $\overline{\cV}$ or $\underline{\cV}$.
\end{remark}

For a given weight $v$ and $c>0$, we shall write $v_c(t):= v(ct)$ and $v^c(t):=v(t)^c$, and set
\[
\underline{\cV}_\fc = (v_c)_{c \in \N_{>0}}, \text{ and } \overline{\cV}_\fc = (v_{1/c})_{c \in \N_{>0}},
\]
and
\[
\underline{\cV}^\fc = (v^c)_{c \in \N_{>0}}, \text{ and } \overline{\cV}^\fc = (v^{1/c})_{c \in \N_{>0}},
\]
in particular $\underline{\cV}_\fc$ and $\underline{\cV}^\fc$ are non-increasing, and $\overline{\cV}_\fc$ and $\overline{\cV}^\fc$ are non-decreasing sequences of weights, see again \cite[Sect. 2.2]{inclusion}.\par

Let $M\in\RR_{>0}^{\NN}$ be given with $M_0=1$, such that $M$ is \hyperlink{lc}{$(\on{lc})$} and satisfies $\lim_{p\rightarrow+\infty}(M_p)^{1/p}=+\infty$ (see \cite[Def. 2.4, Rem. 2.6]{inclusion}). Then we denote by $\underline{\cM}_\fc, \underline{\cM}^\fc, \overline{\cM}_\fc,$ and $\overline{\cM}^\fc$ the respective sequences of weights defined by choosing $v(t):= v_M(t):= e^{-\om_M(t)}$ (see \cite[Rem. 2.7]{inclusion}). If we write $\underline{\cN}_\fc, \underline{\cN}^\fc, \overline{\cN}_\fc,$ and $\overline{\cN}^\fc$ we mean the respective definition for another weight sequence $N$. Finally, we write (of course) $\underline{\cM^*}_\fc,\dots$ for the systems corresponding to the conjugate sequence $M^*$.

\begin{theorem}
\label{thm:main}
Let $M \in \R^\N_{>0}$ with $M_0 = 1 \ge M_1$  be given such that $\lim_{p\rightarrow+\infty}(m_p)^{1/p}=0$ and $m$ is log-concave. Let $I\subseteq\RR$ be an interval, then
\[
E:\cE_{\{M\}}(I,H) \rightarrow \cH^\oo_{\underline{\cM^*}_\fc}(\C,H), \quad f \mapsto E(f):=\sum_{k = 0}^{+\oo}\frac{f^{(k)}(x_0)}{k!}(z-x_0)^k
\]
is an isomorphism (of locally convex spaces) for any fixed $x_0\in I$. Moreover, with the same definition for $E$, also
\[
E:\cE_{(M)}(I,H) \rightarrow \cH^\oo_{\overline{\cM^*}_\fc}(\C,H)
\]
is an isomorphism.
\end{theorem}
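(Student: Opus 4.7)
My plan is to establish the isomorphism via explicit quantitative estimates that translate the derivative bounds defining $\cE_{[M]}$ into the growth of the entire Taylor extension measured by $v_{M^{*}}(t)=e^{-\omega_{M^{*}}(t)}$. First I would invoke Lemma \ref{smallclasseslemma}: our hypothesis $\lim_{p\rightarrow+\oo}(m_p)^{1/p}=0$ ensures that every $f\in\cE_{\{M\}}(I,H)$ has a convergent Taylor series at $x_0$ defining an entire $H$-valued function, so the map $E$ is well-defined at the level of underlying vector spaces; all the Hilbert-valued Cauchy formulas and Cauchy inequalities that enter the argument extend verbatim from the scalar case by the Hahn--Banach theorem, cf.\ Lemma \ref{lem:liftlemma}.

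For the forward estimate I would fix a compact $K\subseteq I$ containing $x_0$ and suppose $\|f\|_{M,K,h}\le C$, so that the Taylor coefficients $a_k:=f^{(k)}(x_0)/k!$ satisfy $\|a_k\|\le C h^k/M^{*}_k$ by \eqref{Mstardef}. Bounding each term $(2t)^k/M^{*}_k$ by $e^{\omega_{M^{*}}(2t)}$ and summing against the geometric factor $2^{-k}$ gives the unconditional estimate $\sum_k t^k/M^{*}_k\le 2\, e^{\omega_{M^{*}}(2t)}$, so that
\[
\|E(f)(z)\|\le C\sum_{k=0}^{+\oo}(h|z-x_0|)^k/M^{*}_k\le 2C\, e^{\omega_{M^{*}}(2h|z-x_0|)}\le C'\, e^{\omega_{M^{*}}(c|z|)}
\]
after using $|z-x_0|\le|z|+|x_0|$ and monotonicity of $\omega_{M^{*}}$, where $c$ depends only on $h$ and $|x_0|$. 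Hence $E(f)\in\cH^{\infty}_{v_c}(\C,H)$.

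For the inverse estimate I would first observe that Lemma \ref{MstarLC}$(a)$ gives $M^{*}\in\hyperlink{LCset}{\mathcal{LC}}$, so the Young-type duality $M^{*}_k=\sup_{t>0}t^k e^{-\omega_{M^{*}}(t)}$, equivalently $\inf_{s>0}e^{\omega_{M^{*}}(s)}/s^k=1/M^{*}_k$, is at our disposal. For $g\in\cH^{\infty}_{v_c}(\C,H)$ the $H$-valued Cauchy inequality on the circle $|w-x|=R$ produces
\[
\frac{\|g^{(k)}(x)\|}{k!}\le C\,\inf_{R>0}\frac{e^{\omega_{M^{*}}(c(|x|+R))}}{R^k},
\]
and the substitution $s=c(|x|+R)$ together with $(s/c-|x|)^k\ge(s/(2c))^k$ valid in the range $s\ge 2c|x|$ (which contains the minimizer once $k$ is large enough, while the finitely many small $k$ are absorbed into the constant) leads to $\|g^{(k)}(x)\|\le C_1(2c)^k M_k$ uniformly for $x\in K$. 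Thus $g|_I\in\cE_{M,2c}(K)$ for every compact $K\subseteq I$, with the parameter $h=2c$ independent of $K$.

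Finally I would close by remarking that $E$ is injective by Taylor's theorem, while surjectivity follows because the previous step produces $g|_I\in\cE_{\{M\}}(I,H)$ and analytic continuation forces $E(g|_I)=g$; the two quantitative bounds give continuity of $E:\cE_{M,h}(K)\to\cH^{\infty}_{v_c}(\C,H)$ and of its inverse with compatible constants, and standard formal manipulation with $\varprojlim$ and $\varinjlim$ then upgrades this to a topological isomorphism of locally convex spaces. The Beurling case needs no new input: the same estimates apply with the quantifiers flipped, because the condition ``$\|f\|_{M,K,h}<+\oo$ for all $h>0$'' produces arbitrarily small parameters $c$ in the forward estimate, while the validity of the growth bound against every $v_{1/c}$ in the target yields $\|g|_I\|_{M,K,h}<+\oo$ for every $h>0$. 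I expect the main obstacle to be the inverse estimate, where the constants have to be squeezed out of the Young duality formula without invoking moderate growth of $M^{*}$, which our hypotheses do not supply in general; the fact that the minimizer in the duality lies at large $s$ for large $k$ is exactly what makes the argument go through.
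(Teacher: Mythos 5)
Your proof is correct and follows essentially the same route as the paper's: the forward bound via $\sum_k t^k/M^{*}_k\le 2e^{\omega_{M^{*}}(2t)}$ is identical, and your inverse estimate through the duality $\inf_{s>0}e^{\omega_{M^{*}}(s)}/s^k=1/M^{*}_k$ (available because $M^{*}\in\mathcal{LC}$ by Lemma \ref{MstarLC}) is the same computation the paper carries out by plugging the explicit radius $r\in[\mu^{*}_n/(2k),\mu^{*}_{n+1}/(2k))$ into the Cauchy estimate, including the absorption of the finitely many small indices into the constant. No gaps.
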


\begin{remark}\label{mainthmremark}
Before proving this main statement we give the following observations:

\begin{itemize}
    \item[$(i)$] By Lemma \ref{MstarLC}, the assumptions on $M$ imply $M^{*}\in\hyperlink{LCset}{\mathcal{LC}}$. It is easy to check that any \emph{small Gevrey} class, i.e. choosing $M_j= j!^\al$ for some $\al \in[0,1)$, satisfies the assumptions of Theorem \ref{thm:main}.

 \item[$(ii)$] Moreover, we comment in detail on the basic requirements for the sequence $M$ in Theorem \ref{thm:main}:

 \begin{itemize}
 \item[$(*)$] Note that both assumptions $M_0=1\ge M_1$ and log-concavity of $m$ are {\itshape not preserved} under equivalence of weight sequences.

 On the other hand, both isomorphisms in Theorem \ref{thm:main} are clearly preserved under equivalence: Equivalent sequences yield the same ultradifferentiable function classes, equivalent conjugate sequences (recall $(ii)$ in Section \ref{sec:propertiesconjugate}) and finally (by definition) also the same weighted entire function classes, see \cite[Prop. 3.8]{inclusion}.

\item[$(*)$] Thus we can assume more generally that $M$ is equivalent to $L\in\RR_{>0}^{\NN}$ such that $L_0 = 1 \ge L_1$, $\lim_{p\rightarrow+\infty}(l_p)^{1/p}=0$, and $l$ is log-concave. In this situation we replace in the proof below $M$ by $L$, $m$ by $l$ and $M^{*}$ by $L^{*}$. Recall that the log-concavity for $l$ can be ensured, e.g., if $(\mu_p/p)_{p\in\NN_{>0}}$ is almost decreasing, see Remark \ref{almoststuff}.

\item[$(*)$] Finally, note the following: Assume that $M$ is equivalent to $L\in\RR_{>0}^{\NN}$ such that $L_0=1\ge L_1$ and $\lim_{p\rightarrow+\infty}(l_p)^{1/p}=0$ but none of the sequences $L$ being equivalent to $M$ has the property that $l$ is log-concave. Thus log-convexity for $L^{*}$ fails for any $L$ being equivalent to $M$. Then both $\cH^\oo_{\underline{\cL^*}_\fc}(\C,H)$ and $\cH^\oo_{\overline{\cL^*}_\fc}(\C,H)$ coincide with the respective classes when $L^{*}$ is replaced by its log-convex minorant $(L^{*})^{\on{lc}}$, see \cite[Rem. 2.6]{inclusion}. In this situation the first part of the proof stays valid; i.e. the operator $E$ is still continuous. But the second part fails in general; more precisely for the equality just below \eqref{eq:cauchy} in the subsequent proof, the log-convexity of the appearing conjugate sequence is indispensable and without this property we can only bound $F^{(n)}$ in terms of $\frac{n!}{(L_n^{*})^{\on{lc}}}=:\overline{L}_n\ge L_n$.
\end{itemize}
\end{itemize}
\end{remark}

\begin{proof}[Proof of Theorem \ref{thm:main}]
We start with the Roumieu case and assume w.l.o.g. that $x_0=0$. Let us take $f \in \cE_{M,h}(K,H)$ for some compact set $K \subset\subset I$ with $0\in K$ and some $h>0$, i.e. there is $A(=\|f\|_{M,K,h})$ such that for all $x \in K$ and all $k \in \N$ we have
\[
\|f^{(k)}(x)\| \le A h^k M_k.
\]
Then we infer immediately that
\[
\|E(f)(z)\| \le A \sum_{k = 0}^{+\oo}\frac{h^kM_k}{k!}|z|^k=A\sum_{k = 0}^{+\oo}\frac{h^k}{M^{*}_k}|z|^k \le 2A \exp(\om_{M^*}(2h|z|)).
\]

Therefore $E$ maps $\cE_{M,h}(K,H)$ continuously into $\cH^\oo_{v_{M^*, 2h}}(\C,H)$ and this immediately implies continuity of $E$ as a mapping defined on the inductive limit with respect to $h$. \par
In the Beurling case a function $f \in \cE_{(M)}(I, H)$ lies in $\cE_{M,h}(K,H)$ for any $h>0$, and thus the above reasoning immediately gives that $E$ is continuous as a mapping into $\mathcal{H}^\oo_{\overline{\cM^*}_\fc}(\C,H)$.\par

Let us now show continuity of the inverse mapping, which is clearly given by restricting an entire function to the interval $I$. Take some $F \in \cH^\oo_{v_{M^*, k}}(\C,H)$, then
\[
\|F(z)\|\le A e^{\om_{M^*}(k|z|)}
\]
for $A=\|F\|_{v_{M^*,k}}>0$. Consider an arbitrary $K \subset\subset I$ and let $R\ge 1$ be such that $K \subset [-R,R]$. Then take $r\ge 2R$, which ensures that $K + B(0,r) \subset B(0,2r)$ and where $B(0,r)$ denotes the ball around $0$ of radius $r$. Then by the Cauchy estimates we infer for such $r$ and all $x \in K$ and $n\in\NN$
\begin{equation}
\label{eq:cauchy}
\|F^{(n)}(x)\|\le An!\frac{e^{\om_{M^*}(2kr)}}{r^n}.
\end{equation}
Since $e^{\om_{M^*}(r)}=\frac{r^n}{M^*_n}$ for $r \in [\mu^*_n, \mu^*_{n+1})$ (see e.g. \cite[1.8 III]{mandelbrojtbook}), we may plug in some $r \in [\mu^*_n/(2k), \mu^*_{n+1}/(2k))$ in \eqref{eq:cauchy}; for all $n$ large enough such that $\mu^*_n/(2k) \ge 2R$ (thus depending on chosen compact $K$) and which is possible since $M^* \in \cL\cC$ and so $\lim_{n\rightarrow+\infty}\mu^*_n=+\infty$. Hence we get
\[
\|F^{(n)}(x)\|\le An!\frac{(2kr)^n}{r^n M_n^*}=A (2k)^nM_n.
\]
For the remaining (finitely, say $n_0$) many integers $n$ with $\mu_n^*/(2k) < 2R$, we can estimate
\[
\|F^{(n)}(x)\|\le CA (2k)^n M_n
\]
where, e.g.  $C=n_0!e^{\om_{M^*}(2kR)}$. Altogether we have shown
\[
\|F|_{I}\|_{M,K,2k} \le C \|F\|_{v_{M^*,k}}
\]
which proves continuity of the inverse mapping in both the Roumieu and the Beurling case.
\end{proof}

\subsection{Comparison of $\cH^\oo_{\underline{\cM^*}_\fc}$ and $\cH^\oo_{\underline{\cM^*}^\fc}$ (resp. $\cH^\oo_{\overline{\cM^*}_\fc}$ and $\cH^\oo_{\overline{\cM^*}^\fc}$)}

Let us quickly recall a recent result characterizing the equality of the two different types of weighted spaces of entire functions, see \cite[Thm. 5.4]{inclusion}.
To this end we need one more condition for $M$:
\begin{equation}
    \label{eq:Momega1}
    \E L\in \N_{>0}:\quad \liminf_{j \rightarrow+\oo} \frac{(M_{Lj})^{1/(Lj)}}{(M_j)^{1/j}}>1.
\end{equation}
In \cite[Thm. 3.1]{subaddlike} it has been shown that $M\in\hyperlink{LCset}{\mathcal{LC}}$ has \eqref{eq:Momega1} if and only if
    \begin{equation}\label{om1}
    \om_M(2t)=O(\om_M(t)) \text{ as } t \rightarrow+\oo.
    \end{equation}

\begin{lemma}
\label{lem:weightedentireiso}
Let $M \in\hyperlink{LCset}{\mathcal{LC}}$. Then the following statements are equivalent:
\begin{itemize}
    \item[$(i)$] $M$ has \hyperlink{mg}{$(\on{mg})$} and satisfies \eqref{eq:Momega1},
    \item[$(ii)$] $\cH^\oo_{\underline{\cM}_\fc}(\C,H) \cong \cH^\oo_{\underline{\cM}^\fc}(\C,H)$,
    \item[$(iii)$] $\cH^\oo_{\overline{\cM}_\fc}(\C,H) \cong \cH^\oo_{\overline{\cM}^\fc}(\C,H)$.
\end{itemize}
\end{lemma}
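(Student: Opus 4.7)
The plan is to reduce the $H$-valued statement to the scalar case $H=\C$, which is precisely \cite[Thm.~5.4]{inclusion}, via the lifting principle of Lemma \ref{lem:liftlemma}.

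The first step is to show that the isomorphisms in $(ii)$ and $(iii)$ hold for $H$-valued entire functions if and only if they hold for $H=\C$. Indeed, by Lemma \ref{lem:liftlemma}, $f$ lies in $\cH^\oo_v(\C,H)$ iff every scalar projection $z\mapsto\langle f(z),y\rangle$ lies in $\cH^\oo_v(\C)$; moreover the $H$-valued weighted sup-norm of $f$ equals the supremum over the unit ball of $H$ of the scalar weighted sup-norms of these projections. Hence a continuous embedding between the scalar spaces automatically lifts to a continuous embedding between the corresponding $H$-valued LB-spaces (resp.\ Fr\'echet spaces), and conversely specialising to constant projections recovers the scalar inclusion. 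I would write this out carefully for both the inductive and the projective system, noting that projections commute with the limits.

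Granted this reduction, $(i) \Leftrightarrow (ii) \Leftrightarrow (iii)$ for $H=\C$ is the content of \cite[Thm.~5.4]{inclusion}. For context, the underlying mechanism, which I would recall very briefly rather than reprove, is the standard translation between the two weight systems and pointwise inequalities for $\omega_M$: the topologies generated by $\underline{\cM}_\fc$ and $\underline{\cM}^\fc$ (resp.\ $\overline{\cM}_\fc$ and $\overline{\cM}^\fc$) coincide iff for every $c>0$ one can find $c'>0$ and a constant $D$ with
$$c\,\omega_M(t)\le \omega_M(c' t)+D,\qquad \omega_M(c t)\le c'\,\omega_M(t)+D,$$
the first of these being the associated-weight-function reformulation of \hyperlink{mg}{$(\on{mg})$} and the second being \eqref{om1}, which by \cite[Thm.~3.1]{subaddlike} is equivalent to \eqref{eq:Momega1} provided $M\in\hyperlink{LCset}{\mathcal{LC}}$.

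The only real obstacle is verifying that Lemma \ref{lem:liftlemma}, stated for a single weight, genuinely transfers an isomorphism of weight systems (and not merely an isomorphism between individual Banach components) to the $H$-valued setting. I expect this to be routine once one writes down the identification of the $H$-valued sup-norm as a supremum of scalar sup-norms against $y$ in the unit ball; no step in the argument requires any new estimate beyond those already established in \cite{inclusion} and \cite{subaddlike}.
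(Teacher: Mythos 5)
Your overall strategy coincides with the paper's: both reduce the statement to the scalar case \cite[Thm.~5.4]{inclusion}, and your converse direction (embedding scalar functions into $H$-valued ones by tensoring with a fixed $0\neq x\in H$) is exactly what the paper does. The difference is in the forward direction: the paper simply reruns the scalar proof of \cite[Thm.~5.4]{inclusion} with $|\cdot|$ replaced by $\|\cdot\|$, whereas you try to deduce the $H$-valued isomorphism \emph{abstractly} from the already-established scalar isomorphism via the projections $z\mapsto\langle f(z),y\rangle$ of Lemma~\ref{lem:liftlemma}. That route can be made to work, but the obstacle you flag at the end is a genuine one and is not quite ``routine'' as stated: for the inductive systems, knowing that each projection $\langle f(\cdot),y\rangle$ lies in $\varinjlim_m\cH^\oo_{w_m}(\C)$ only places it in \emph{some} step $m=m(y)$, and you need a single $m$ and a single constant working uniformly over the unit ball of $H$ before you can take the supremum over $y$ and recover $\|f\|_{w_m}<+\infty$. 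To close this you must either invoke Grothendieck's factorization theorem (the continuous inclusion of the Banach step $\cH^\oo_{v_n}(\C)$ into the scalar (LB)-space factors continuously through one step $\cH^\oo_{w_m}(\C)$), or — closer to the paper's spirit — observe that the scalar isomorphism is, by \cite[Thm.~5.4]{inclusion}, equivalent to the pointwise weight inequalities you display (which correctly encode \hyperlink{mg}{$(\on{mg})$} and \eqref{om1}), and these lift verbatim to $\|f(z)\|$ without any projection argument. With that repair your proof is correct; it buys a cleaner black-box use of the scalar theorem at the cost of an extra functional-analytic lemma, while the paper's version avoids the issue entirely by working with norms from the start.
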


\begin{proof}
    In \cite[Thm. 5.4]{inclusion}, the result is shown for $H = \C$. In order to get that $(i)$ implies $(ii)$ and $(iii)$ the proof of \cite[Thm. 5.4]{inclusion} can be repeated and only the appearances of $|\cdot|$ (the absolute value in $\C$) have to be substituted by $\|\cdot\|$ (the norm in the Hilbert space $H$).\par
    In order to get the other implications, i.e. that $(ii)$ resp. $(iii)$ implies $(i)$, note that the respective equality in the Hilbert space-valued case implies the equality for the $\C$-valued case by observing that $f \in \cH^\oo_v(\C) (=\cH^\oo_v(\C,\C))$ if and only if for any $0 \neq x \in H$ we have $z \mapsto f(z)x \in \cH^\oo_v(\C,H)$. Therefore we may apply \cite[Thm. 5.4]{inclusion} and infer $(i)$.
\end{proof}

Together with results from Section \ref{sec:propertiesconjugate}, we derive the following.

\begin{corollary}
\label{cor:weightedentireiso}
    Let $M \in \R^\N_{>0}$ be given and assume the following:

\begin{itemize}
    \item[$(*)$] $M$ is log-convex with $1=M_0=M_1$ (i.e. both normalization and $1 = M_0 \ge M_1$),

    \item[$(*)$]  $\lim_{p\rightarrow+\infty}m_p^{1/p}=0$,

\item[$(*)$] $m$ is log-concave, and finally

\item[$(*)$] for some $Q\in\NN_{\ge 2}$ we have $\liminf_{p \rightarrow+\infty} \frac{\mu_p}{\mu_{Qp}} > \frac{1}{Q}$.
\end{itemize}

Then
    \[
    \cH^\oo_{\underline{\cM^*}_\fc}(\C,H) \cong \cH^\oo_{\underline{\cM^*}^\fc}(\C,H), \quad  \cH^\oo_{\overline{\cM^*}_\fc}(\C,H) \cong \cH^\oo_{\overline{\cM^*}^\fc}(\C,H),
    \]
    and $E$ is an isomorphism between $\cE_{\{M\}}(I,H)$ and $\cH^\oo_{\underline{\cM^*}^\fc}(\C,H)$ resp. between $\cE_{(M)}(I,H)$ and  $\cH^\oo_{\overline{\cM^*}^\fc}(\C,H)$.
\end{corollary}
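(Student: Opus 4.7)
The plan is to reduce the assertion to Lemma \ref{lem:weightedentireiso} applied to the conjugate sequence $N:=M^{*}$, and then to compose the resulting equivalences of weighted entire function spaces with the identifications supplied by Theorem \ref{thm:main}. The hypotheses of Theorem \ref{thm:main} are formally weaker than those of the corollary (since $1=M_0=M_1$ entails $1=M_0\ge M_1$), so that theorem immediately yields isomorphisms $E:\cE_{\{M\}}(I,H)\to\cH^\oo_{\underline{\cM^{*}}_\fc}(\C,H)$ and $E:\cE_{(M)}(I,H)\to\cH^\oo_{\overline{\cM^{*}}_\fc}(\C,H)$. The whole task therefore reduces to verifying the three conditions of Lemma \ref{lem:weightedentireiso} for $M^{*}$.

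The first two conditions come essentially for free. By part $(a)$ of Lemma \ref{MstarLC}, the log-concavity of $m$ together with $\lim_{p\to+\oo}(m_p)^{1/p}=0$ gives $M^{*}\in\hyperlink{LCset}{\mathcal{LC}}$. By item $(v)$ of Section \ref{sec:propertiesconjugate}, the log-convexity of $M$ with $M_0=1$ gives \hyperlink{mg}{$(\on{mg})$} for $M^{*}$. It remains to verify condition \eqref{eq:Momega1} for $M^{*}$, which, since we have just established $M^{*}\in\hyperlink{LCset}{\mathcal{LC}}$, is equivalent to the doubling condition $\omega_{M^{*}}(2t)=O(\omega_{M^{*}}(t))$ recalled in \eqref{om1}.

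This last step is the main obstacle. By item $(vii)$ of Section \ref{sec:propertiesconjugate}, the fourth hypothesis of the corollary is exactly \hyperlink{beta3}{$(\beta_3)$} for $M^{*}$, so the task is to prove that for a sequence in $\hyperlink{LCset}{\mathcal{LC}}$, the condition \hyperlink{beta3}{$(\beta_3)$} implies $\omega_{M^{*}}(2t)=O(\omega_{M^{*}}(t))$. I would argue this via the staircase description of $\omega_{M^{*}}$ in terms of its quotient sequence $(\mu^{*}_p)_p$ (using the identity $e^{\omega_{M^{*}}(r)}=r^n/M^{*}_n$ for $r\in[\mu^{*}_n,\mu^{*}_{n+1})$ already invoked in the proof of Theorem \ref{thm:main}): iterating the $(\beta_3)$-inequality $\mu^{*}_{Qp}\ge c\mu^{*}_p$ with some $c>1$ yields the geometric growth $\mu^{*}_{Q^j p}\ge c^j\mu^{*}_p$, which translates into a bound $\omega_{M^{*}}(Qt)\le K\omega_{M^{*}}(t)+L$ for large $t$, and standard iteration in the exponent then yields the desired doubling estimate. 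Once \eqref{eq:Momega1} is established for $M^{*}$, Lemma \ref{lem:weightedentireiso} delivers the two displayed isomorphisms of weighted spaces of entire functions, and composing these with the isomorphisms provided by Theorem \ref{thm:main} finishes the proof.
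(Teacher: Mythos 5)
Your proposal is correct and follows essentially the same route as the paper: check that $M^{*}\in\hyperlink{LCset}{\mathcal{LC}}$, has moderate growth and satisfies \eqref{eq:Momega1}, apply Lemma \ref{lem:weightedentireiso} to $M^{*}$, and compose with Theorem \ref{thm:main}. The only deviation is that for the step ``$(\beta_3)$ for $M^{*}$ implies \eqref{eq:Momega1}'' the paper simply cites \cite[Prop.~3.4]{subaddlike}, whereas you sketch a direct (and sound) counting-function argument of the kind carried out in Lemma \ref{betaindexlemma}.
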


\begin{proof}
    By $(v)$ in Section \ref{sec:propertiesconjugate} it follows that $M^*$ has \hyperlink{mg}{$(\on{mg})$}. By $(vii)$ from Section \ref{sec:propertiesconjugate} we infer that $M^*$ has $(\be_3)$ and thus \cite[Prop. 3.4]{subaddlike} gives that $M^*$ has \eqref{eq:Momega1}. Finally observe that $M^*\in\hyperlink{LCset}{\mathcal{LC}}$ holds true: $\lim_{p\rightarrow+\infty}m_p^{1/p}=0$ implies $\lim_{p\rightarrow+\infty}(M^*_p)^{1/p}=+\infty$ (see $(iii)$ in Section \ref{sec:propertiesconjugate}), log-convexity of $M^*$ follows from log-concavity of $m$ (see $(iv)$ in Section \ref{sec:propertiesconjugate}) and normalization of $M^*$ is immediate. Thus we may apply Lemma \ref{lem:weightedentireiso} to $M^*$. The rest follows from Theorem \ref{thm:main}.
\end{proof}

\begin{remark}
 Observe that the conditions of Lemma \ref{lem:weightedentireiso} hold if and only if $\cE_{[M^*]} \cong \cE_{[\om_{M^*}]}$, cf.
 \cite[Thm. 14]{BonetMeiseMelikhov07}, \cite[Sect. 5]{compositionpaper} and \cite[Prop. 3.4]{subaddlike}.

 Note also that Corollary \ref{cor:weightedentireiso} applies, in particular, to all small Gevrey sequences $G^{\al}$, $0\le\alpha<1$, see the next Section for its importance.
\end{remark}

\subsection{A result by Markin as a Corollary of Theorem \ref{thm:main}}
\label{sec:Markinascor}

One of Markin's core results in \cite{MarkinIII}, Lemma 3.1, shows, in our setting the following: For any $\al \in [0,1)$ and $M^\al_j:=j^{j\al}$, which is equivalent to $G^\al_j= j!^\al$ (i.e. the small Gevrey sequence of order $\al$) and with $v(t):= e^{-t^{1/(1-\al)}}$ we obtain that
\[
E: \cE_{\{G^\al\}}(I,H)\rightarrow \cH^\oo_{\underline{\cV}^\fc}(\C,H)
\]
is an isomorphism of locally convex vector spaces; and mutatis mutandis the same holds in the respective Beurling case. With our preparation, this now is a corollary of Theorem \ref{thm:main} together with the following observations:
\begin{itemize}
    \item Corollary \ref{cor:weightedentireiso} applies to $M=G^\al$,
    \item $(G^\al)^* = G^{1-\al}$,
    \item $\om_{G^{1-\al}} \cong t^{\frac{1}{1-\al}}$, i.e. $\om_{G^{1-\al}}(t) = O( t^{\frac{1}{1-\al}}), ~ t^{\frac{1}{1-\al}} = O(\om_{G^{1-\al}}(t))$ as $t \rightarrow+\infty$.
\end{itemize}

\subsection{Characterization of inclusion relations for small weight sequences}\label{charactsmallsequsection}
In the theory of ultradifferentiable functions the characterization of the inclusion $\mathcal{E}_{[M]}\subseteq\mathcal{E}_{[N]}$ in terms of a growth property expressed in terms of $M$ and $N$ is studied. Summarizing we get the following, e.g. see \cite[Prop. 2.12]{compositionpaper} and the literature citations there; similar/analogous techniques have also been applied to the more general and recent approaches in \cite[Prop. 4.6]{compositionpaper} and \cite[Sect. 4]{PTTvsmatrix}:

\begin{itemize}
\item[$(*)$] If $M,N\in\RR_{>0}^{\NN}$ with $M\hyperlink{mpreceq}{\preceq}N$, then $\mathcal{E}_{\{M\}}\subseteq\mathcal{E}_{\{N\}}$ and $\mathcal{E}_{(M)}\subseteq\mathcal{E}_{(N)}$ with continuous inclusion.

\item[$(*)$] If in addition $M$ is normalized and log-convex, then $\mathcal{E}_{\{M\}}(\RR)\subseteq\mathcal{E}_{\{N\}}(\RR)$ (as sets) yields $M\hyperlink{mpreceq}{\preceq}N$.

    If $M,N\in\hyperlink{LCset}{\mathcal{LC}}$, then $\mathcal{E}_{(M)}(\RR)\subseteq\mathcal{E}_{(N)}(\RR)$ (as sets and/or with continuous inclusion, see the proof of \cite[Prop. 4.6]{compositionpaper} and \cite[Prop. 4.5, Rem. 4.6]{PTTvsmatrix}) yields $M\hyperlink{mpreceq}{\preceq}N$.
\end{itemize}

Thus for the necessity of $M\hyperlink{mpreceq}{\preceq}N$ standard regularity and growth assumptions for $M$ are required and so far it is not known what can be said for (small) sequences $M$ ``beyond'' this setting. Via an application of Theorem \ref{thm:main} and main results from \cite{inclusion}, we now may actually prove as a corollary an analogous statement.\par

First let us recall \cite[Thm. 3.14]{inclusion}, where the following characterization is shown (even under formally slightly more general assumptions on the weight $N$, see also \cite[Rem. 2.6]{inclusion}).

\begin{theorem}\label{weightholombysequcharact}
Let $N\in\hyperlink{LCset}{\mathcal{LC}}$ and $M\in\RR_{>0}^{\NN}$ such that $M$ satisfies $M_0=1$ and $\lim_{p\rightarrow+\infty}(M_p)^{1/p}=+\infty$. Then the following are equivalent:
\begin{itemize}
\item[$(a)$] We have $N\hyperlink{preceq}{\preceq}M$.

\item[$(b)$] We have
$$\cH^{\infty}_{\underline{\mathcal{M}}_{\mathfrak{c}}}(\CC)\subseteq \cH^{\infty}_{\underline{\mathcal{N}}_{\mathfrak{c}}}(\CC).$$

\item[$(c)$] We have
$$\cH^{\infty}_{\overline{\mathcal{M}}_{\mathfrak{c}}}(\CC)\subseteq \cH^{\infty}_{\overline{\mathcal{N}}_{\mathfrak{c}}}(\CC).$$
\end{itemize}
\end{theorem}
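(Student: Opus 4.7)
The plan is to prove $(a) \Leftrightarrow (b)$ and $(a) \Leftrightarrow (c)$ by the standard translation between inequalities on the sequences and inequalities on the associated weight functions, the latter then being read off the weighted sup-norms defining the spaces.

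For the easy direction $(a) \Rightarrow (b),(c)$, I would unpack $N \preceq M$ as the existence of constants $C, H \ge 1$ with $N_p \le C H^p M_p$ for every $p \in \NN$. Plugging into the definition of $\omega_N$ yields at once
\[
\omega_M(s) \le \omega_N(Hs) + \log C \qquad (s \ge 0),
\]
so that $v_{M,c}(t) = e^{-\omega_M(ct)} \ge C^{-1} v_{N,Hc}(t)$. Any entire $f$ with $\|f\|_{v_{M,c}} < +\infty$ therefore satisfies $\|f\|_{v_{N,Hc}} \le C \|f\|_{v_{M,c}}$; this gives continuous inclusions at the Banach-space level, and chasing through the Roumieu inductive limit yields $(b)$ while chasing through the Beurling projective limit yields $(c)$.

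For the harder converse, the plan is to exhibit an $M$-characteristic entire function $\Theta_M$ whose modulus captures $e^{\omega_M}$ two-sidedly, in the sense
\[
K_1 e^{\omega_M(k_1 |z|)} \le |\Theta_M(z)| \le K_2 e^{\omega_M(k_2 |z|)}
\]
for suitable positive constants $K_i, k_i$ and $|z|$ large. Under the blanket hypothesis $M_0 = 1$ and $(M_p)^{1/p} \to +\infty$, such a $\Theta_M$ can be built from the log-convex minorant of $M$ (noting $\omega_M = \omega_{M^{\on{lc}}}$) as a canonical product with zeros placed at the radii $\mu^{M^{\on{lc}}}_p$, in the spirit of the \emph{$M$-characteristic functions} referenced in Remark \ref{rem:inclusion}. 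The upper bound puts $\Theta_M \in \cH^\infty_{\underline{\cM}_\fc}(\CC)$; the hypothesis $(b)$ then forces $|\Theta_M(z)| \le A e^{\omega_N(c |z|)}$ for some $A,c>0$, and combined with the lower bound this gives $\omega_M(k_1|z|) \le \omega_N(c|z|) + O(1)$. Applying the Legendre-type recovery $N_p = \sup_{t>0} t^p e^{-\omega_N(t)}$, which is lossless precisely because $N \in \mathcal{LC}$, together with $M^{\on{lc}} \le M$, turns this weight-function inequality back into $N_p \le C'(H')^p M_p$, i.e.\ $(a)$. The Beurling variant $(c) \Rightarrow (a)$ is analogous but uses a family of test functions parameterized by a scaling factor, matching the projective-limit structure.

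The main obstacle is the two-sided characteristic-function construction and the careful bookkeeping of constants through the Legendre transform; the regularity hypothesis $N \in \mathcal{LC}$ enters exactly at this step, to guarantee lossless recovery of $N_p$ from $\omega_N$, whereas no such assumption is placed on $M$, which is felt only through $\omega_M = \omega_{M^{\on{lc}}}$. This is essentially the route of \cite[Thm.~3.14]{inclusion}, where the weight-system framework is set up in enough generality to accommodate both the Roumieu and Beurling variants uniformly.
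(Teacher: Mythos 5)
The paper itself gives no proof of this statement: Theorem \ref{weightholombysequcharact} is quoted verbatim as \cite[Thm.~3.14]{inclusion}, so there is no internal argument to compare against. Judged on its own terms, your sketch is essentially the standard argument behind that cited result. The easy direction is correct and complete: $N_p\le CH^pM_p$ gives $\omega_M(t)\le\omega_N(Ht)+\log C$, hence the weighted inclusions at each step of the inductive/projective limits. For the converse your overall strategy (test function whose modulus is controlled two-sidedly by $e^{\omega_M}$, then Legendre recovery of $N_p$ from $\omega_N$, which is lossless exactly because $N\in\mathcal{LC}$, while $M$ enters only through $M^{\on{lc}}\le M$) is the right one.

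Two points need repair. First, a canonical product with zeros tending to infinity cannot satisfy $K_1e^{\omega_M(k_1|z|)}\le|\Theta_M(z)|$ for \emph{all} large $|z|$ — it vanishes at its zeros. You only need the lower bound on a single ray avoiding the zeros, and the cleanest choice is $\theta_M(z)=\sum_{p}z^p/M_p$ (entire since $(M_p)^{1/p}\to+\infty$), for which $\theta_M(t)\ge e^{\omega_M(t)}$ for $t>0$ is immediate and $|\theta_M(z)|\le 2e^{\omega_M(2|z|)}$ holds everywhere. Second, the Beurling implication $(c)\Rightarrow(a)$ is genuinely not ``analogous'': $\theta_M$ (or any single scaled copy $\theta_M(a\cdot)$) lies only in the Roumieu space $\cH^{\infty}_{\underline{\cM}_{\fc}}(\CC)$, not in $\cH^{\infty}_{\overline{\cM}_{\fc}}(\CC)$, so you cannot feed it into the hypothesis. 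You must either (i) invoke the closed graph theorem for these Fr\'echet spaces to upgrade the set inclusion to a continuous one and then test on the monomials $z^p$, whose norms compute to $c^pN_p$ and $c^pM_p^{\on{lc}}$ respectively, or (ii) build a single lacunary series $\sum_j z^{p_j}/(j^{p_j}M_{p_j}^{\on{lc}})$ along a subsequence witnessing a hypothetical failure of $N\preceq M$, check it lies in the Beurling $M$-class, and read off a contradiction from the Cauchy estimates. As written, your Beurling paragraph glosses over this, and it is the one place where the argument as stated would not go through.
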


Thus, by combining Theorem \ref{thm:main} and Theorem \ref{weightholombysequcharact}, which we apply to $N^{*}$ and $M^{*}$, we get the following:

\begin{theorem}\label{equivalencesmallsequthm}
Let $M,N\in\RR_{>0}^{\NN}$ be given and assume that
\begin{itemize}
\item[$(*)$] $1=M_0\ge M_1$ and $1=N_0\ge N_1$,

\item[$(*)$] $\lim_{p\rightarrow+\infty}(m_p)^{1/p}=\lim_{p\rightarrow+\infty}(n_p)^{1/p}=0$,

\item[$(*)$] both $m$ and $n$ are log-concave.
\end{itemize}

Then the following are equivalent:
\begin{itemize}
\item[$(i)$] We have $M\hyperlink{mpreceq}{\preceq}N$.

\item[$(ii)$] We have $\mathcal{E}_{\{M\}}\subseteq\mathcal{E}_{\{N\}}$ with continuous inclusion.

\item[$(iii)$] We have $\mathcal{E}_{(M)}\subseteq\mathcal{E}_{(N)}$ with continuous inclusion.
\end{itemize}
\end{theorem}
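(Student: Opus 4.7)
The plan is to derive the equivalences by combining Theorem~\ref{thm:main}, which identifies $\mathcal{E}_{[M]}$ with a weighted space of entire functions built from the conjugate sequence $M^{*}$, with Theorem~\ref{weightholombysequcharact}, which characterizes inclusions of such weighted entire spaces in terms of the defining weight sequences. The only nontrivial work is the reverse implications $(ii)\Rightarrow(i)$ and $(iii)\Rightarrow(i)$; the implications $(i)\Rightarrow(ii)$ and $(i)\Rightarrow(iii)$ are standard and were already recalled at the start of Section~\ref{charactsmallsequsection}: if $M\hyperlink{mpreceq}{\preceq}N$, then there exist $C,h\ge 1$ with $M_p\le Ch^{p}N_p$ for all $p$, hence $\|f\|_{N,K,hh'}\le C\,\|f\|_{M,K,h'}$ for every compact $K$ and every $h'>0$, which yields continuous inclusions in both the Roumieu and the Beurling topology.

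For the reverse direction we pass to the conjugate sequences. Under the standing hypotheses ($1=M_0\ge M_1$ and $1=N_0\ge N_1$, $\lim (m_p)^{1/p}=\lim (n_p)^{1/p}=0$, and log-concavity of $m$ and $n$), part $(a)$ of Lemma~\ref{MstarLC} gives $M^{*},N^{*}\in\hyperlink{LCset}{\mathcal{LC}}$, and from item~$(iii)$ of Section~\ref{sec:propertiesconjugate} one reads off $M^{*}_0=N^{*}_0=1$ together with $\lim (M^{*}_p)^{1/p}=\lim (N^{*}_p)^{1/p}=+\infty$. Fixing any interval $I\subseteq\RR$ and applying Theorem~\ref{thm:main} to $M$ and to $N$ (with codomain $\CC$), we obtain the topological isomorphisms
\begin{equation*}
\mathcal{E}_{\{M\}}(I)\;\cong\;\mathcal{H}^{\infty}_{\underline{\mathcal{M}^{*}}_{\mathfrak{c}}}(\CC),\qquad \mathcal{E}_{\{N\}}(I)\;\cong\;\mathcal{H}^{\infty}_{\underline{\mathcal{N}^{*}}_{\mathfrak{c}}}(\CC),
\end{equation*}
and the corresponding statements for the Beurling classes with $\overline{(\cdot)}_{\mathfrak{c}}$. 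Hence an inclusion (continuous or merely set-theoretic) of the ultradifferentiable classes transfers via these isomorphisms to an inclusion of the associated weighted entire classes.

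Now apply Theorem~\ref{weightholombysequcharact} with the roles interchanged: take its ``$N$'' to be our $N^{*}$ and its ``$M$'' to be our $M^{*}$; the required assumptions $N^{*}\in\hyperlink{LCset}{\mathcal{LC}}$, $M^{*}_0=1$, and $\lim (M^{*}_p)^{1/p}=+\infty$ are verified in the previous paragraph. The theorem then states that the weighted entire inclusion (in either the $\underline{(\cdot)}_{\mathfrak{c}}$ or $\overline{(\cdot)}_{\mathfrak{c}}$ version) is equivalent to $N^{*}\hyperlink{mpreceq}{\preceq}M^{*}$. By item~$(ii)$ of Section~\ref{sec:propertiesconjugate}, this last condition is equivalent to $M\hyperlink{mpreceq}{\preceq}N$, which closes the circle of equivalences.

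I do not expect a substantive obstacle: the content is already packaged in the two cited theorems, and all that remains is bookkeeping of the hypotheses under the passage $M\mapsto M^{*}$. The only point worth underlining is conceptual: conjugation reverses the preorder $\preceq$, so the ``upward'' inclusion of ultradifferentiable classes on one side matches the ``upward'' inclusion of weighted entire classes for the conjugate sequences on the other, and this reversal is precisely what makes Theorem~\ref{thm:main} and Theorem~\ref{weightholombysequcharact} dovetail into the claimed equivalence.
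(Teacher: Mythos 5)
Your proposal is correct and follows essentially the same route as the paper: transfer the inclusion of ultradifferentiable classes via the isomorphism of Theorem \ref{thm:main} to an inclusion of weighted entire classes for the conjugate sequences, apply Theorem \ref{weightholombysequcharact} to obtain $N^{*}\hyperlink{mpreceq}{\preceq}M^{*}$, and conclude $M\hyperlink{mpreceq}{\preceq}N$ by item $(ii)$ of Section \ref{sec:propertiesconjugate}. Your verification of the hypotheses of Theorem \ref{weightholombysequcharact} under conjugation is slightly more explicit than the paper's, but the argument is the same.
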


\begin{proof}
It remains to prove $(ii),(iii)\Rightarrow(i)$. We use the inclusion in $(ii)$ resp. in $(iii)$ for some compact interval $I$, i.e. $\mathcal{E}_{[M]}(I)\subseteq\mathcal{E}_{[N]}(I)$. Then the characterization shown in Theorem \ref{thm:main} yields $\cH^{\infty}_{\underline{\mathcal{M}^{*}}_{\mathfrak{c}}}(\CC)\subseteq \cH^{\infty}_{\underline{\mathcal{N}^{*}}_{\mathfrak{c}}}(\CC)$ resp. $\cH^{\infty}_{\overline{\mathcal{M}^{*}}_{\mathfrak{c}}}(\CC)\subseteq \cH^{\infty}_{\overline{\mathcal{N}^{*}}_{\mathfrak{c}}}(\CC)$. By the assumptions on $M,N$ we get $M^{*}, N^{*}\in\hyperlink{LCset}{\mathcal{LC}}$ and then Theorem \ref{weightholombysequcharact} gives $N^{*}\hyperlink{preceq}{\preceq}M^{*}$ which is equivalent to $M\hyperlink{preceq}{\preceq}N$ (recall $(ii)$ in Section \ref{sec:propertiesconjugate}) and so $(i)$ is shown.
\end{proof}

\section{A criterion for boundedness of an operator on a Hilbert space}\label{boundednesssection}

The aim of this section is to generalize results by M. Markin from \cite{MarkinI}, \cite{MarkinII}, and \cite{MarkinIII} (obtained within the so-called {\itshape small Gevrey setting}) to a more general weight sequence setting when considering appropriate families of small weight sequences. (In fact, Markin considers instead of $G^{\be}$, $0\le\be<1$, the sequence $M^\be_j:=j^{j\be}$ but which is equivalent to $G^{\be}$ by Stirling's formula. Since equivalence clearly preserves the corresponding function spaces his results immediately transfer to $G^{\be}$ as well.)\vspace{6pt}

Markin studies, for a Hilbert space $H$, and a normal (unbounded) operator $A$ on $H$ the associated evolution equation
\begin{equation}
    \label{eq:evo}
    y'(t) = A y(t)
\end{equation}
and asks whether a priori known smoothness of all solutions of \eqref{eq:evo} yield boundedness of the operator $A$.\par
For a detailed exposition of evolution equations on Hilbert spaces we refer to Chapters 1 (bounded case) and 4 (unbounded case) in \cite{Pazy}.

\subsection{Solutions for bounded operators}

First let us recall quickly the situation for bounded operators $A$. For those, the domain is all of $H$. It is a classical result in this context that every solution $y$ of \eqref{eq:evo} is of the form
\[
y(t)=e^{tA}y_0,
\]
for some $y_0 \in H$, where $e^{tA}:= \sum_{k = 0}^{+\oo} \frac{t^k}{k!}A^k$ and which converges locally uniformly (with respect to $t$) in the norm topology on $B(H)$ (the space of bounded operators on $H$). Moreover, $y$ can be extended to an entire function such that
\[
\|y(z)\|\le Me^{C|z|}
\]
for some constants $M$ and $C$ and all $z\in\CC$.
Thus we may conclude the subsequent statement.
\begin{itemize}
    \item[$(i)$] If $A$ is a bounded operator on $H$, then \emph{each} solution $y$ of \eqref{eq:evo} is an entire function of exponential type.
\end{itemize}
On the other hand we have the following:
\begin{itemize}
    \item [$(ii)$] As outlined by M. Markin in \cite{MarkinI}, \cite{MarkinII} and \cite{MarkinIII}, there exists an unbounded normal operator $A$ (that is actually not bounded on $H$) such that each (weak) solution of \eqref{eq:evo} is an entire function.
\end{itemize}

\subsection{Motivating question}
\label{sec:motivating}
So one may ask whether one can reverse the implication in $(i)$, and if this is possible to what extent one can weaken the assumption of exponential type. From $(ii)$ it is clear that one cannot get completely rid of any additional growth restriction!
\par
Markin does exactly that in \cite{MarkinIII}. Let us first recall his approach and then subsequently considerably extend it.

\subsection{A generalization of Markin's results}

The main result \cite[Thm. 5.1]{MarkinIII} states that if \emph{each} weak solution of \eqref{eq:evo} is in some \emph{small} Gevrey class, i.e. admitting a growth restriction expressed in germs of $G^\al$ with $\alpha<1$, then the operator $A$ is necessarily bounded on $H$.
This is of special interest since, as outlined in Section \ref{sec:Markinascor}, every small Gevrey class can be identified with a weighted class of entire functions. \par
Before we are able to generalize Markin's result we need some definitions:
For a densely defined operator $A$ on $H$, we first set
\[
C^\infty(A):=\bigcap_{n \in \N} D(A^n),
\]
where $D(A^n)$ is the domain of $A^n$, the $n$-fold iteration of $A$. Then put
\[
\cE_{\{M\}}(A):= \{f \in C^\oo(A):~ \E C,h >0~ \A n \in \N ~\|A^nf\|\le C h^n M_n\},
\]
and the corresponding Beurling class is defined by
\[
\cE_{(M)}(A):= \{f \in C^\oo(A):~ \forall\;h >0\;\exists\;C>0~ \A n \in \N ~\|A^nf\|\le C h^n M_n\}.
\]

From \cite[Sect. 1.3]{Gorbachuk} a different description of $\cE_{\{M\}}(A)$ in terms of $E_A$, the spectral measure associated to $A$, can be deduced as follows:
\[
\cE_{\{M\}}(A)=\{ f \in H:~\E t>0~ \int_\C e^{2 \om_M(t|\la|)} \langle dE_A(\la) f,f\rangle <+\infty\},
\]
and
\[
\cE_{(M)}(A)=\{ f \in H:~\A t>0~ \int_\C e^{2\om_M(t|\la|)} \langle dE_A(\la) f,f\rangle <+\infty\}.
\]
Now we have the following result which generalizes \cite[Thm. 3.1]{MarkinII}.

\begin{theorem}
\label{thm:regsolution}
    Let $M\in\RR_{>0}^{\NN}$ be given and $I\subseteq\RR$ a closed interval. Then a solution $y$ of \eqref{eq:evo} belongs to $\cE_{[M]}(I,H)$ if and only if $y(t) \in \cE_{[M]}(A)$ for all $t \in I$. In this case one has $y^{(n)}(t)=A^ny(t)$ for all $t \in I$.
\end{theorem}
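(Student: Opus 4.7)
My plan is to reduce both implications to the identity $y^{(n)}(t)=A^n y(t)$, from which the bounds defining $\cE_{[M]}(I,H)$ and those defining $\cE_{[M]}(A)$ become formally identical, up to uniformity in $t$.

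For the identity I argue by induction on $n$. The case $n=1$ is the defining equation $y'(t)=Ay(t)$, which in particular gives $y(t)\in D(A)$. For the step $n\mapsto n+1$, assume $y$ is $n$-times differentiable and $y^{(n)}(t)=A^n y(t)=Ay^{(n-1)}(t)$ on $I$. Linearity of $A$ yields
\[
\frac{y^{(n)}(t+h)-y^{(n)}(t)}{h}=A\Bigl(\frac{y^{(n-1)}(t+h)-y^{(n-1)}(t)}{h}\Bigr),
\]
so that if $y^{(n+1)}(t)$ exists, the left-hand side converges to $y^{(n+1)}(t)$ as $h\to 0$ while the inner vector on the right converges to $y^{(n)}(t)$. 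Closedness of $A$ (valid since $A$ is normal) then forces $y^{(n)}(t)\in D(A)$ with $Ay^{(n)}(t)=y^{(n+1)}(t)$, hence $y^{(n+1)}(t)=A^{n+1}y(t)$.

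For the implication $y\in\cE_{[M]}(I,H)\Rightarrow y(t)\in\cE_{[M]}(A)$ smoothness of $y$ is built into the hypothesis, so the induction above is unconditional; the identity then gives $y(t)\in C^\infty(A)$ together with $\|A^n y(t)\|=\|y^{(n)}(t)\|$ for every $n$ and $t$, and the pointwise Roumieu (resp.\ Beurling) bound on $\|y^{(n)}(t)\|$ at a fixed $t\in I$ is exactly the condition defining $\cE_{[M]}(A)$.

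For the converse, starting only from $y(t)\in\cE_{[M]}(A)$ for every $t\in I$, smoothness of $y\colon I\to H$ is not a priori available, and the inductive step above requires existence of $y^{(n+1)}(t)$. I would invoke the spectral representation of the normal operator $A$: a weak solution with $y_0:=y(0)\in C^\infty(A)$ admits the form $y(t)=\int_\CC e^{t\la}\,dE_A(\la)y_0$, and the hypothesis $y(t)\in C^\infty(A)$ for all $t$ justifies differentiating under the spectral integral, which simultaneously establishes $y\in\cE(I,H)$ and the identity $y^{(n)}(t)=A^n y(t)$. To pass from the pointwise bounds on $\|A^n y(t)\|$ to bounds uniform on compacts, I would use the change of scalar measure $d\mu_{y(t)}(\la)=e^{2t\Re\la}d\mu_{y(0)}(\la)$ together with the weight-function reformulation $y(0)\in\cE_{[M]}(A)\iff\int_\CC e^{2\om_M(s|\la|)}d\mu_{y(0)}(\la)<+\oo$ recalled just before the theorem, so that the factor $e^{2t\Re\la}$ is absorbed into a modified exponential weight uniformly for $t$ in any compact $K\subset I$. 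The main obstacle, in my view, is this last uniformization step: turning pointwise-in-$t$ assumptions (with constants that may depend on $t$) into bounds uniform on compact subsets, simultaneously in the Roumieu and Beurling cases, whose quantifier structures over $h>0$ are opposite. The spectral calculus for normal $A$ and the $\om_M$-description of $\cE_{[M]}(A)$ make this tractable, but careful justification of differentiation under the spectral integral is required in order to even make sense of the identity $y^{(n)}(t)=A^n y(t)$ in the converse direction.
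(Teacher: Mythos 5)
Your forward direction is essentially the paper's argument: once $y^{(n)}(t)=A^ny(t)$ is known, the two sets of bounds coincide verbatim. Your induction via closedness of $A$ (normal operators are closed) is exactly the content of the result of Markin that the paper cites for this identity; the only point you gloss over is that for an \emph{unbounded} $A$ a ``weak solution'' does not a priori satisfy $y'(t)=Ay(t)$ with $y(t)\in D(A)$ pointwise, so the base case of your induction itself needs the weak-to-strong upgrade — but this is precisely what the cited \cite[Prop.~4.1]{MarkinI} supplies, so the structure is right.

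In the converse direction there is a genuine gap, and it is the one you yourself flag: you never actually perform the uniformization over compacta. The resolution is a single concrete inequality that your spectral setup is pointing at but does not state. Since $\|y^{(n)}(t)\|^2=\int_\C|\la|^{2n}e^{2t\Re\la}\,d\langle E_A(\la)y_0,y_0\rangle$ and, for $t\in[a,b]$, one has $e^{2t\Re\la}\le e^{2a\Re\la}+e^{2b\Re\la}$ pointwise in $\la$ (monotonicity in $t$ for each fixed sign of $\Re\la$), it follows that $\max_{t\in[a,b]}\|y^{(n)}(t)\|\le\|y^{(n)}(a)\|+\|y^{(n)}(b)\|$. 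This is the estimate the paper imports from \cite[Prop.~3.1]{MarkinII}, and it settles both quantifier regimes at once: in the Roumieu case take $h=\max(h_a,h_b)$ and $C=C_a+C_b$ from the two endpoint bounds; in the Beurling case do this for each $h$. Two further remarks. First, your plan to ``absorb $e^{2t\Re\la}$ into a modified exponential weight'' via the $\om_M$-description of $\cE_{[M]}(A)$ is shakier than the endpoint domination above: the theorem is stated for arbitrary $M\in\RR_{>0}^{\NN}$, for which $\om_M$ need not even be finite, whereas the derivative-level estimate needs no hypothesis on $M$ at all. Second, you are right that smoothness of $t\mapsto y(t)$ and the identity $y^{(n)}(t)=A^ny(t)$ must be re-established in the converse direction by differentiating under the spectral integral; this is again part of what the citation to Markin covers, and the domination $e^{2t\Re\la}\le e^{2a\Re\la}+e^{2b\Re\la}$ is also what justifies that differentiation (dominated convergence with an integrable majorant independent of $t\in[a,b]$). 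With that inequality written down, your outline closes into a complete proof along the same lines as the paper's.
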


\begin{proof}
Let $y$ be a solution of \eqref{eq:evo} such that $y \in \cE_{[M]}(I,H)$. Since $y \in C^\oo(I,H)$, we have by \cite[Prop. 4.1]{MarkinI} that $y^{(n)}(t)=A^ny(t)$ for all $t \in I$ and all $n \in \N$. Therefore
\[
\|A^ny(t)\| = \|y^{(n)}(t)\| \le C h^n M_n,
\]
where $h$ is either in the scope of an existential or universal quantifier depending on the context. This immediately gives that $y(t) \in \cE_{[M]}(A)$ for all $t$.\par
For the converse direction, we argue as in \cite[Proof of Prop. 3.1]{MarkinII} where it is  shown that in this case for any subinterval $[a,b] \subseteq I$
\[
\max_{t \in [a,b]}\|y^{(n)}(t)\| \le \|y^{(n)}(a)\|+\|y^{(n)}(b)\|.
\]
Since again we have $y^{(n)}(t) = A^n y(t)$ this immediately yields $y \in \cE_{[M]}(I,H)$.
\end{proof}

We need one more result generalizing \cite[Lemma 4.1]{MarkinIII} which reads as follows.

\begin{lemma}
\label{lem:Lemma4}
Let $0<\be<+\infty$. If
\[
\bigcup_{0<\be'<\be}\cE_{\{G^{\be'}\}}(A) = \cE_{(G^{\be})}(A),
\]
then the operator $A$ is bounded.
\end{lemma}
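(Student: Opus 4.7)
The plan is to proceed by contradiction: assume $A$ is not bounded, so by normality its spectrum $\si(A)$ is unbounded in $\CC$, and construct a vector $f\in\cE_{(G^{\be})}(A)$ that lies in \emph{no} Roumieu class $\cE_{\{G^{\be'}\}}(A)$ with $\be'\in(0,\be)$. This directly contradicts the hypothesized equality. The construction rests on the spectral description recorded just above the lemma, which for the Roumieu case reads
$$\cE_{\{M\}}(A)=\Bigl\{g\in H:\E t>0,\;\int_\CC e^{2\om_M(t|\la|)}\,d\mu_g(\la)<+\oo\Bigr\},\qquad \mu_g:=\<E_A(\cdot)g,g\>,$$
with the quantifier replaced by $\A$ in the Beurling case, combined with the Gevrey asymptotics $\om_{G^{\al}}(t)\asymp t^{1/\al}$ as $t\to+\oo$ used already in Section \ref{sec:Markinascor}.

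Exploiting unboundedness of $\si(A)$, I would select a sequence of radii $r_n\to+\oo$ growing at least geometrically (aiming for $r_n=2^n$, up to passing to a subsequence along which $E_A$ actually has non-trivial mass) and pairwise disjoint Borel sets $B_n\subseteq\{\la:r_n\le|\la|<r_n+\rho_n\}$ with $E_A(B_n)\ne 0$. Picking unit vectors $e_n\in\on{range}(E_A(B_n))$, which are mutually orthogonal by disjointness, I would set $f:=\sum_n c_n e_n$ with $c_n:=\exp(-nr_n^{1/\be})$. Then $\sum c_n^2<+\oo$ and $f\in H$, with $\mu_f$ supported on $\bigcup_n B_n$ and $\mu_f(B_n)=c_n^2$. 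Since $\om_{G^{\al}}$ is non-decreasing and $|\la|\asymp r_n$ on $B_n$, the integral test for $\cE_{[G^{\al}]}(A)$-membership reduces (up to constants comparable to $t^{1/\al}$) to the comparison series
$$S_\al(K):=\sum_{n\in\NN}c_n^2\exp\bigl(K r_n^{1/\al}\bigr),\qquad K>0,$$
with $\al=\be$ probing the Beurling condition (all $K$) and $\al=\be'<\be$ probing Roumieu (some $K$).

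Substituting $c_n^2=\exp(-2n r_n^{1/\be})$ and $r_n=2^n$, the Beurling series becomes $\sum_n\exp\bigl((K-2n)\,2^{n/\be}\bigr)$, which is summable for \emph{every} $K>0$, so $f\in\cE_{(G^{\be})}(A)$. For the Roumieu side with any $\be'<\be$, the exponent reads
$$K r_n^{1/\be'}-2n r_n^{1/\be}=2^{n/\be}\bigl(K\cdot 2^{n(1/\be'-1/\be)}-2n\bigr)\longrightarrow+\oo,$$
since $1/\be'-1/\be>0$, so $S_{\be'}(K)=+\oo$ for every $K>0$ and $f\notin\cE_{\{G^{\be'}\}}(A)$. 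This contradicts the hypothesis and therefore forces $A$ to be bounded. I expect the main obstacle to be the spectral selection step: one must guarantee that $E_A$ carries non-trivial mass at arbitrarily large radii, which is done by choosing $r_n$ along the unbounded support of $E_A$ rather than along a preassigned sequence, while ensuring the chosen $r_n$ still grow at least geometrically so that the subsequent exponential-gap estimate survives.
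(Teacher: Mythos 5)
Your proof is correct, and its skeleton --- assume $A$ unbounded, use unboundedness of $\si(A)$ to pick orthonormal vectors $e_n$ in the ranges of spectral projections of disjoint sets escaping to infinity, and tune an $\ell^2$ coefficient sequence so that the Beurling spectral integral converges for every $t$ while every Roumieu integral diverges for every $t$ --- is exactly the skeleton of Markin's argument, which the paper does not reproduce for Lemma \ref{lem:Lemma4} itself (it is quoted from \cite{MarkinIII}) but adapts in the proof of the generalization, Lemma \ref{lem:Lemma4General}, recovering the case $\be=1$ via $a_j=\log(j)^{-j}$. Where you genuinely differ is in the parametrization: the paper uses consecutive rings $k(n)<|\la|<k(n)+1$ and coefficients $g_{\mathbf a}(k(n))^{-(k(n)+1-\ve_n)}$ built from a slowly growing auxiliary function $g_{\mathbf a}$ (Lemma \ref{lem:technicallemma}), which is what makes the divergence estimate work uniformly over an abstract family $\fF$ dominated by a sequence $\mathbf a$; you instead exploit the explicit asymptotics $\om_{G^\al}(t)\asymp t^{1/\al}$ to take geometrically spaced radii $r_n\ge 2^n$ and explicit coefficients $e^{-nr_n^{1/\be}}$, which dispenses with the auxiliary function entirely and handles all $0<\be<+\oo$ directly rather than only $\be=1$. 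Your estimates check out: $\sum_n e^{(K-2n)r_n^{1/\be}}<+\oo$ for every $K>0$ since the exponent is eventually $\le -(2n-K)$, and for $\be'<\be$ the exponent $Kr_n^{1/\be'}-2nr_n^{1/\be}=r_n^{1/\be}\bigl(Kr_n^{1/\be'-1/\be}-2n\bigr)\to+\oo$ because $r_n^{1/\be'-1/\be}$ grows geometrically in $n$; note that the superpolynomial growth of $r_n$ is genuinely needed to defeat all $\be'<\be$ with a single vector $f$, so your insistence on geometric spacing is not cosmetic. The one point to tidy is the spectral selection: a half-open annulus $\{r_n\le|\la|<r_n+\rho_n\}$ with a spectral point on its inner boundary could a priori carry zero spectral mass, so you should choose $\la_n\in\si(A)$ with $|\la_n|\ge\max(2^n,|\la_{n-1}|+2)$ and take $B_n$ to be a small open ball around $\la_n$, for which $E_A(B_n)\neq 0$ is automatic and $|\la|\asymp|\la_n|$ still holds on $B_n$ --- this is precisely the role of the shifts $\ve_n$ in the paper's rings, and it is the fix you already sketch at the end, so there is no gap of substance.
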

Note that in \cite{MarkinIII} the notation $\cE^{[\be]}(A)$ is used instead of $\cE_{[G^\be]}(A)$ (i.e. the respective Gevrey class of order $\be$). Since we have a generalization of \cite[Thm. 5.1]{MarkinIII} as our goal, we only need a generalization of the above Lemma in the case $\be = 1$. So we want to conclude that an operator $A$ on a Hilbert space $H$ is bounded if we can write the \emph{entire} functions corresponding to $A$ (i.e. $\be =1$) as an union of certain smaller Roumieu classes. Note that this statement might seem ``counterintuitive'' when considering the ultradifferentiable classes introduced in Section \ref{ultradiffclasssection}. However, note that the classes in Section \ref{ultradiffclasssection} are defined by using the differential operator which is unbounded.\par

Summarizing, our generalization of Markin's result reads as follows.

\begin{lemma}
    \label{lem:Lemma4General}
    Let $\fF \subseteq \cL\cC$ be a family of sequences such that
     \begin{equation}
     \label{Rmixedom1}
    \forall\;N\in\fF\;\exists\;M\in\fF:\;\;\;\om_M(2t)=O(\om_N(t)) \text{ as } t \rightarrow+\oo,
    \end{equation}
    i.e. a mixed version of \eqref{om1} (of Roumieu-type, see \cite[Sect. 3]{mixedgrowthindex}).

    Suppose there exists $\mathbf{a} = (a_j)\in\RR_{>0}^{\NN}$ with the following properties:
    \begin{itemize}
    \item[$(i)$] we have $\lim_{j \rightarrow+\oo} a_j^{1/j} = 0$,

    \item[$(ii)$] $\mathbf{a}$ is a {\itshape uniform bound} for $\fF$ which means that
    $$\forall\;N \in \fF\;\exists\;C>0\;\forall\;j\in\NN:\;\;\;(N_j/j!=)n_j \le Ca_j.$$
    \end{itemize}
    Then
     \[
    \bigcup_{N \in \fF}\cE_{\{N\}}(A) = \cE_{(G^1)}(A) \text{ as sets }
    \]
    implies that $A$ is bounded.
\end{lemma}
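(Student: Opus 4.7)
The plan is to proceed by contraposition: assume $A$ is unbounded and construct $f\in\cE_{(G^1)}(A)$ lying in no $\cE_{\{N\}}(A)$, $N\in\fF$. Since $A$ is an unbounded normal operator, $\sigma(A)\subseteq\CC$ is unbounded, so via the spectral theorem I can choose pairwise disjoint closed disks $\De_k\subseteq\CC$ of radius at most $1/k$ centered at spectral points $\la_k$ with $s_k:=\inf_{\la\in\De_k}|\la|\to+\infty$, together with unit vectors $g_k\in\on{Range}(E_A(\De_k))$; these are orthonormal since the $\De_k$ are disjoint. Any $f=\sum_k c_k g_k$ with $\sum|c_k|^2<+\infty$ then has scalar spectral measure $\langle E_A(\cdot)f,f\rangle$ supported in $\bigcup_k\De_k$ and of total mass $|c_k|^2$ on $\De_k$, so the spectral descriptions from the beginning of Section \ref{boundednesssection} reduce the membership tests to the discrete sums
\[
\sum_k |c_k|^2\,e^{2\om_{G^1}(t s_k^+)}\quad\text{and}\quad\sum_k |c_k|^2\,e^{2\om_N(t s_k)},
\]
where $s_k^+:=\sup_{\la\in\De_k}|\la|\le s_k+2/k$.

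The role of the three hypotheses is to produce from one coefficient sequence growth that is tame on the $G^1$-side and wild on the $\{N\}$-side for every $N\in\fF$ and every $t>0$. Setting $B_j:=a_j j!$, hypothesis $(ii)$ yields the uniform envelope $\om_N(u)\ge\om_B(u)-\log C$ for all $N\in\fF$, while hypothesis $(i)$ together with Stirling applied at $p=\lfloor s\rfloor$ in $\om_B(s)=\sup_p\log(s^p/(a_p p!))$ gives $\om_B(s)/s\to+\infty$ (since $-\log a_{\lfloor s\rfloor}\ge Ls$ eventually for any $L>0$). Finally, iterating \eqref{Rmixedom1} produces, for each $N\in\fF$ and $n\in\NN$, a sequence $N_0:=N,N_1,\dots,N_n\in\fF$ satisfying $\om_{N_n}(2^n u)\le D^{(n)}\om_N(u)$; choosing $n=n(t):=\lceil\log_2(1/t)\rceil$ so that $2^{n(t)}t\ge 1$ and combining with the uniform envelope yields the scaled lower bound
\[
\om_N(t s)\ge C_{t,N}\,\om_B(s)-C'_{t,N},\qquad s\ge 0,
\]
with a strictly positive constant $C_{t,N}>0$. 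This uniform-in-$s$ bound is the crucial ingredient: without \eqref{Rmixedom1} the small-$t$ regime cannot be handled.

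To finish, I pick $\ep_k\downarrow 0$ slowly enough that $\ep_k\om_B(s_k)/s_k\to+\infty$ (possible by the superlinearity of $\om_B$) and set
\[
|c_k|^2:=\frac{1}{k^2}\exp\bigl(-\ep_k\om_B(s_k)\bigr).
\]
For $\cE_{(G^1)}(A)$-membership, $\om_{G^1}(u)\le u$ together with the smallness of $\De_k$ gives, for every $t>0$,
\[
\int e^{2\om_{G^1}(t|\la|)}\,d\langle E_A(\la)f,f\rangle\le K_t\sum_k\frac{1}{k^2}\exp\bigl(2t s_k-\ep_k\om_B(s_k)\bigr)<+\infty,
\]
since the exponent is eventually non-positive. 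For non-membership in $\cE_{\{N\}}(A)$, the key inequality above yields, for every $N\in\fF$ and every $t>0$,
\[
\int e^{2\om_N(t|\la|)}\,d\langle E_A(\la)f,f\rangle\ge e^{-2C'_{t,N}}\sum_k\frac{1}{k^2}\exp\bigl((2C_{t,N}-\ep_k)\om_B(s_k)\bigr)=+\infty,
\]
since eventually $2C_{t,N}-\ep_k\ge C_{t,N}>0$ while $\om_B(s_k)\gg\log k$. This contradicts $\bigcup_{N\in\fF}\cE_{\{N\}}(A)=\cE_{(G^1)}(A)$, so $A$ must be bounded. The main obstacle is precisely manufacturing a single $(c_k)$ that defeats every pair $(N,t)\in\fF\times(0,+\infty)$ simultaneously: hypothesis $(ii)$ collapses the $N$-dependence into the one envelope $\om_B$, hypothesis $(i)$ furnishes the superlinear growth powering the $G^1$-side, and \eqref{Rmixedom1} provides the controlled rescaling $t s\mapsto s$ indispensable when $t$ is small.
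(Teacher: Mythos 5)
Your argument is correct and follows essentially the same route as the paper's proof: contraposition via a spectral construction of a single vector $f$ that lies in $\cE_{(G^1)}(A)$ but in no $\cE_{\{N\}}(A)$, with hypothesis $(ii)$ collapsing all $N\in\fF$ onto one envelope, hypothesis $(i)$ giving its superlinear growth, \eqref{Rmixedom1} iterated to absorb the scaling parameter $t$, and a square-root (geometric-mean) choice of coefficient decay sitting strictly between linear growth and the envelope. The only cosmetic difference is that you work directly with $\om_B$ for $B_j=a_jj!$, whereas the paper packages the same estimates into the auxiliary functions $h_{\mathbf a}$ and $g_{\mathbf a}$ of its technical lemma.
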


\begin{remark}
\label{rem:Markin}
We gather some comments concerning the previous result:
\begin{itemize}
\item[$(*)$] By choosing $a_j = \frac{1}{\log(j)^j}$, Lemma \ref{lem:Lemma4General} includes Lemma \ref{lem:Lemma4} (with $\be = 1$) as a special case.

\item[$(*)$] Requirements $(i)$ and $(ii)$ in Lemma \ref{lem:Lemma4General} imply that $\lim_{j \rightarrow+\oo} n_j^{1/j} = 0$ for all $N \in \fF$.

\item[$(*)$] If each $N\in\fF$ satisfies \eqref{eq:Momega1}, then \eqref{Rmixedom1} follows with $M=N$.

\item[$(*)$] In \cite[Thm. 3.2]{mixedgrowthindex} condition \eqref{Rmixedom1} has been characterized for one-parameter families (weight matrices, see \cite[Sect. 2.5]{mixedgrowthindex}) in terms of the following requirement:
    $$\exists\;r>1\;\forall\;N\in \fF\;\exists\;M\in \fF\;\exists\;L\in\NN_{>0}:\quad \liminf_{j \rightarrow+\oo} \frac{(M_{Lj})^{1/(Lj)}}{(N_j)^{1/j}}>r,$$
    i.e. a mixed version of \eqref{eq:Momega1}.
\end{itemize}
\end{remark}

Actually we show now that, if $\fF$ consists of a one-parameter family of sequences having some rather mild regularity and growth properties, then it is already possible to find some sequence $\mathbf{a}$ as required in Lemma \ref{lem:Lemma4General}.

\begin{proposition}
\label{prop:uniformbound}
    Let $\fF:=\{N^{(\beta)}\in\RR_{>0}^{\NN}: \beta>0\}$ be a one-parameter family of
sequences $N^{(\beta)}$ which satisfies the following properties:
\begin{itemize}
    \item[$(i)$] $N_0^{(\beta)}=1$ for all $\beta>0$ (normalization),\\
    \item[$(ii)$] $N^{(\beta_1)}\le N^{(\beta_2)}\Leftrightarrow n^{(\beta_1)}\le
    n^{(\beta_2)}$ for all $0<\beta_1\le\beta_2$ (point-wise order),\\
    \item[$(iii)$] $\lim_{j\rightarrow+\infty}(n^{(\beta)}_j)^{1/j}=0$ for each
    $\beta>0$,\\
    \item[$(iv)$] $j\mapsto(n^{(\be)}_j)^{1/j}$ is non-increasing for every $\be >0$,\\
    \item[$(v)$] $\lim_{j\rightarrow+\infty}\left(\frac{N^{(\beta_2)}_j}{N^{(\beta_1)}_j}\right)^{1/j}=\lim_{j\rightarrow+\infty}\left(\frac{n^{(\beta_2)}_j}{n^{(\beta_1)}_j}\right)^{1/j}=+\infty$
for all $0<\beta_1<\beta_2$ (large growth difference between the sequences).
\end{itemize}

Then there exists $\mathbf{a}=(a_j)_j\in\RR_{>0}^{\NN}$ such that
\begin{itemize}
    \item[$(*)$] $j\mapsto(a_j)^{1/j}$ is non-increasing,\\
    \item[$(*)$] $\lim_{j\rightarrow+\infty}(a_j)^{1/j}=0$, and\\
    \item[$(*)$] $\lim_{j\rightarrow+\infty}\left(\frac{a_j}{n_j^{(\beta)}}\right)^{1/j}=+\infty$
for all $\beta>0$.
\end{itemize}
In particular, this implies that there exists a uniform sequence/bound $\mathbf{a}$ for $\fF$ as required in Lemma \ref{lem:Lemma4General}.

In addition, the family $\fF$ satisfies \eqref{Rmixedom1}.
\end{proposition}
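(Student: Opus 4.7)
The plan is to build $\mathbf{a}$ by a diagonal argument combining (iii) (pointwise vanishing at $\infty$) with (v) (growth separation between different family members), and to derive \eqref{Rmixedom1} directly from (v). For brevity I set $c^{(\be)}_j:=(n^{(\be)}_j)^{1/j}$; by (iv), (iii), (ii), (v) these satisfy: $c^{(\be)}_j$ is non-increasing in $j$, tends to $0$, is monotone in $\be$, and $c^{(\be')}_j/c^{(\be)}_j\to+\oo$ whenever $\be<\be'$. The target is an $\alpha_j$ realizing the three bullet points in the conclusion, after which one simply takes $a_j:=\alpha_j^{\,j}$.

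Using (iii), the first step is to pick integers $j_1<j_2<\cdots$ with $c^{(k)}_j\le 4^{-k}$ for all $j\ge j_k$, and to define the candidate
$$\tilde\alpha_j:=2^k\,c^{(k)}_j\quad\text{for } j\in[j_k,j_{k+1}).$$
Then $\tilde\alpha_j\le 2^{-k}$ on each interval (so $\tilde\alpha_j\to 0$), and for any fixed $\be>0$, once $k\ge\be$ condition (ii) gives $c^{(\be)}_j\le c^{(k)}_j$, whence $\tilde\alpha_j/c^{(\be)}_j\ge 2^k\to+\oo$. The main obstacle is that $\tilde\alpha_j$ may jump upward at the transitions $j=j_{k+1}$ and thereby fail to be non-increasing; condition (v) actually worsens these jumps, so monotonicity cannot be arranged merely by tuning the $j_k$. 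The clean remedy is to pass to the non-increasing envelope $\alpha_j:=\sup_{i\ge j}\tilde\alpha_i$, which is finite (bounded by $2^{-k}$ on $[j_k,+\oo)$), still decreases to $0$, and through $\alpha_j\ge\tilde\alpha_j$ preserves the divergence $\alpha_j/c^{(\be)}_j\to+\oo$. Setting $a_j:=\alpha_j^{\,j}$ yields the required sequence, and the uniform-bound inequality $n^{(\be)}_j\le C_\be a_j$ follows immediately from $(a_j/n^{(\be)}_j)^{1/j}\to+\oo$.

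For \eqref{Rmixedom1}, given $N=N^{(\be)}$ the idea is to take $M:=N^{(\be')}$ for any fixed $\be'>\be$. By (v), $(N^{(\be')}_j)^{1/j}\ge 2(N^{(\be)}_j)^{1/j}$ for all $j\ge j_0$, equivalently $N^{(\be')}_j\ge 2^j N^{(\be)}_j$. This forces $\log\bigl((2t)^j/N^{(\be')}_j\bigr)\le\log\bigl(t^j/N^{(\be)}_j\bigr)\le\om_N(t)$ for $j\ge j_0$, while for the finitely many $j<j_0$ the same term is bounded by $j_0\log(2t)+O(1)=O(\log t)$. Taking the supremum yields $\om_M(2t)\le\om_N(t)+O(\log t)$, and since $\om_N(t)/\log t\to+\oo$ as $t\to+\oo$ (use $\om_N(t)\ge p\log t-\log N^{(\be)}_p$ with $p$ arbitrary large) this last quantity is $O(\om_N(t))$, as required.
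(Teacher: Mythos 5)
Your construction is correct, and it follows the same overall blueprint as the paper (a diagonal argument over the integer-indexed subfamily $N^{(k)}$, with $a_j$ defined blockwise on $[j_k,j_{k+1})$), but the technical realization is genuinely different. The paper makes $(a_j)^{1/j}$ a staircase, constant equal to $(n^{(k)}_{j_k})^{1/j_k}$ on the $k$-th block, and secures both the monotonicity at the transitions and the domination $\lim_j (a_j/n^{(\beta)}_j)^{1/j}=+\infty$ by choosing $j_{k+1}$ so that $(n^{(k)}_{j_k})^{1/j_k}>k(n^{(k+1)}_{j_{k+1}})^{1/j_{k+1}}$ and $(n^{(k+1)}_j)^{1/j}\ge k\,(n^{(k)}_j)^{1/j}$ for $j\ge j_{k+1}$; this uses all of $(ii)$--$(v)$. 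You instead track the $k$-th sequence itself, $\tilde\alpha_j=2^k(n^{(k)}_j)^{1/j}$, and repair the possible upward jumps at the block boundaries by passing to the non-increasing envelope $\alpha_j=\sup_{i\ge j}\tilde\alpha_i$; since $\alpha_j\ge\tilde\alpha_j$ and $\alpha_j\le 2^{-k}$ on $[j_k,+\infty)$, all three required properties survive. The noteworthy payoff of your route is that the existence of $\mathbf{a}$ only uses $(i)$--$(iii)$ (pointwise order and pointwise decay of the roots), so hypotheses $(iv)$ and $(v)$ are not needed for that part of the conclusion; the price is the extra envelope step and a slightly less explicit $\mathbf{a}$. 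For \eqref{Rmixedom1} both arguments are essentially identical — extract $N^{(\beta')}_j\ge 2^jN^{(\beta)}_j$ for large $j$ from $(v)$ — except that the paper absorbs the finitely many exceptional indices into a multiplicative constant on the sequence level, yielding the cleaner bound $\om_{N^{(\beta')}}(2t)\le\om_{N^{(\beta)}}(t)+\log C$ with no need for the $\om_N(t)/\log t\to+\infty$ observation, whereas your additive $O(\log t)$ correction requires (and you correctly supply) that observation. Only cosmetic loose ends remain in your write-up: define $a_0$ (and $\tilde\alpha_j$ for $j<j_1$) explicitly, as the paper does with $a_0:=1$.
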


{\itshape Note:}
\begin{itemize}
    \item[$(*)$] Requirement $(iv)$ weaker than assuming log-concavity for each $n^{(\be)}$: Together with $(i)$, i.e. $n^{(\be)}_0=1$ (for each $\be$), log-concavity implies $(iv)$; see $(iv)$ in Section \ref{sec:propertiesconjugate}.

\item[$(*)$] Moreover, if $(iv)$ is replaced by assuming that each $n^{(\be)}$ is log-concave and $(i)$ by slightly stronger $n^{(\beta)}_1\le n^{(\be)}_0=1$ (for each $\be$), then in view of Theorem \ref{equivalencesmallsequthm} we see that $(iii)$ and $(v)$ together yield
$$\forall\;0<\beta_1<\beta_2:\;\;\;\mathcal{E}_{[N^{(\beta_1)}]}\subsetneq\mathcal{E}_{[N^{(\beta_2)}]}.$$

\item[$(*)$] In any case, $(v)$ implies that the sequences are pair-wise not equivalent.

\item[$(*)$] Finally, property $(v)$ alone is sufficient in order to have \eqref{Rmixedom1} for $\fF$.
\end{itemize}

\begin{proof}
Put $j_1:=1$ and for $k\in\NN_{>0}$ set $j_{k+1}$ to be the smallest integer $j_{k+1}>j_k$ with
\begin{equation}\label{uniformboundequ1}
(n_{j_k}^{(k)})^{1/j_k}> k (n_{j_{k+1}}^{(k+1)})^{1/j_{k+1}},
\end{equation}
and such that for all $j\ge j_{k+1}$ and all $k$ we get
\begin{equation}\label{uniformboundequ2}
\frac{(n^{(k+1)}_{j})^{1/j}}{(n^{(k)}_j)^{1/j}}\ge k.
\end{equation}
For \eqref{uniformboundequ1} we have used properties $(ii),(iii)$ and $(iv)$, and \eqref{uniformboundequ2} holds by property $(v)$.

Now put $a_0:=1$ and, for $j_k\le j<j_{k+1}$, we set
\[
(a_j)^{1/j}:=(n_{j_k}^{(k)})^{1/j_k}.
\]
Thus we have by definition that $j\mapsto(a_j)^{1/j}$ is non-increasing
and tending to $0$. \par
Finally, let $k_0 \in \N_{>0}$ be given (and from now on fixed). For $j\ge
j_{k_0+1}$ we can find $k\ge k_0$ such that $j_{k+1}\le
j<j_{k+2}$. Thus, in this situation we can estimate as follows:
\begin{align*}
    \frac{a_j^{1/j}}{(n^{(k_0)}_j)^{1/j}} = \frac{(n^{(k+1)}_{j_{k+1}})^{1/j_{k+1}}}{(n^{(k_0)}_j)^{1/j}}\ge \frac{(n^{(k+1)}_{j_{k+1}})^{1/j_{k+1}}}{(n^{(k)}_j)^{1/j}} \ge  \frac{(n^{(k+1)}_{j})^{1/j}}{(n^{(k)}_j)^{1/j}}\ge k,
\end{align*}
hence $\lim_{j\rightarrow+\infty}\frac{a_j^{1/j}}{(n^{(k_0)}_j)^{1/j}}=+\infty$. The second inequality follows from the fact that $j\mapsto(n_j^{(k+1)})^{1/j}$ is non-increasing (property $(iv)$). By the point-wise order for any $\be>0$ we can find some $k_0 \in \N_{>0}$ such that $\frac{a_j^{1/j}}{(n^{(\be)}_j)^{1/j}}\ge\frac{a_j^{1/j}}{(n^{(k_0)}_j)^{1/j}}$ for all $j\ge 1$ and hence the last desired property for $\mathbf{a}$ is verified.

Concerning \eqref{Rmixedom1}, we note that by $(v)$ we get $2^jN^{(\beta_1)}_j\le N^{(\beta_2)}_j$ for all $0<\beta_1<\beta_2$ and all $j$ sufficiently large. Consequently,
$$\forall\;0<\beta_1<\beta_2\;\exists\;C\ge 1\;\forall\;j\in\NN:\;\;\;2^jN^{(\beta_1)}_j\le CN^{(\beta_2)}_j,$$
which yields by definition of associated weights $\omega_{N^{(\beta_2)}}(2t)\le\omega_{N^{(\beta_1)}}(t)+\log(C)$ for all $t\ge 0$. This verifies \eqref{Rmixedom1} for $\fF$.
\end{proof}

\begin{remark}
The previous result shows that any family $\fF \subseteq \cL\cC$ that can be parametrized to satisfy $(ii)-(v)$  from Proposition \ref{prop:uniformbound} is already uniformly bounded by some sequence $\mathbf{a}$.

Consequently, in this case the assumptions $(i)$ and $(ii)$ from Lemma \ref{lem:Lemma4General} on the existence of $\mathbf{a}$ are superfluous and also assumption \eqref{Rmixedom1} for $\fF$ holds true automatically.
\end{remark}

Before we can give the proof of Lemma \ref{lem:Lemma4General}, we need one more technical lemma as preparation.

\begin{lemma}
\label{lem:technicallemma}
   Let $\mathbf{a} = (a_j)_j \in \R_{>0}^\N$  with  $\lim_{j\rightarrow+\infty}a_j^{1/j}=0$ be given. Then there exists a function $g = g_{\mathbf{a}}: \RR_{>0}\rightarrow \RR_{>0}$ with the following properties:
   \begin{itemize}
       \item[$(*)$] $\lim_{t\rightarrow+\infty}g_{\mathbf{a}}(t)=+\infty$.

       \item[$(*)$] For all $N\in\hyperlink{LCset}{\mathcal{LC}}$ such that $n_j \le D a_j$ (for some $D = D(N)>0$ and all $j\in\NN$), and all $d, s>0$ we have that
    \[
    \lim_{t\rightarrow+\infty}s\om_N(t/2)-dg_{\mathbf{a}}(t)t=+\oo.
    \]
   \end{itemize}
\end{lemma}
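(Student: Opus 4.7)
The plan is to build $g$ out of a single ``master'' weight $\om_{\mathbf{a}}$ that is defined only from $\mathbf{a}$ and that bounds from below \emph{every} admissible $\om_N$ at once, and then to use a square-root construction to guarantee that $g(t)\to+\oo$ while $g(t)\,t$ remains negligible compared with $\om_{\mathbf{a}}(t/2)$.

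First I would set
\[
\om_{\mathbf{a}}(t) := \sup_{j\ge 0}\log\!\Bigl(\tfrac{t^j}{j!\,a_j}\Bigr),\qquad t\ge 0.
\]
For any $N\in\mathcal{LC}$ with $n_j\le D a_j$ we have $N_j = j!\,n_j \le D\,j!\,a_j$, and comparing the defining suprema termwise gives $\om_N(t)\ge \om_{\mathbf{a}}(t)-\log D$ for every $t\ge 0$. Should $\om_{\mathbf{a}}(t_0)=+\oo$ for some $t_0>0$, then no admissible $N$ can exist (else $\om_N(t_0)=+\oo$, contradicting $N\in\mathcal{LC}$), the lemma is vacuous, and any $g(t):=\log(1+t)$ works. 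Henceforth I assume $\om_{\mathbf{a}}:\RR_{\ge 0}\to[0,+\oo)$.

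The key growth estimate I would establish is $\om_{\mathbf{a}}(t)/t\to+\oo$. Writing $\vh(j):=-\log a_j^{1/j}$, the hypothesis forces $\vh(j)\to+\oo$. Testing the supremum at $j=\lfloor t\rfloor$ and using $\log j!\le j\log j$ gives, for $t\ge 2$,
\[
\om_{\mathbf{a}}(t)\ \ge\ j\log(t/j)-\log a_j\ \ge\ j\,\vh(j)\ \ge\ \tfrac{t}{2}\,\vh(\lfloor t\rfloor),
\]
since $t/j\ge 1$ and $j\ge t/2$; dividing by $t$ and letting $t\to+\oo$ yields the claim.

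Finally I would put
\[
g(t):=\max\!\Bigl\{\,1,\ \sqrt{\om_{\mathbf{a}}(t/2)/t}\,\Bigr\},
\]
so $g:\RR_{>0}\to\RR_{>0}$, and $g(t)\to+\oo$ because $\om_{\mathbf{a}}(t/2)/t\to+\oo$ by the previous step. For $t$ so large that $g(t)=\sqrt{\om_{\mathbf{a}}(t/2)/t}$, setting $u:=\om_{\mathbf{a}}(t/2)$ one obtains
\[
s\,\om_N(t/2)-d\,g(t)\,t\ \ge\ su - s\log D - d\sqrt{u\,t}\ =\ \sqrt u\bigl(s\sqrt u-d\sqrt t\,\bigr)-s\log D,
\]
and since $u/t\to+\oo$ both $\sqrt u$ and $s\sqrt u-d\sqrt t=\sqrt t\,(s\sqrt{u/t}-d)$ tend to $+\oo$, so the right-hand side diverges to $+\oo$, which is the desired conclusion. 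The one conceptual obstacle is securing finiteness of $\om_{\mathbf{a}}$, but the trivial dichotomy above dispatches it.
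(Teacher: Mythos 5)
Your proof is correct and follows essentially the same route as the paper: you build a master weight from $\mathbf{a}$ that minorizes every admissible $\om_N$ up to $\log D$, show it grows superlinearly, and take $g$ to be the square root of (master weight)$/t$ so that $g\to+\infty$ while $g(t)t$ stays $o(\om_N(t/2))$. The only (cosmetic) differences are that you use the sup-form associated weight of $(j!\,a_j)_j$ with a term test at $j=\lfloor t\rfloor$ where the paper uses the logarithm of the power series $\sum_k (t/2)^k/(a_k k!)$ bounded below by $e^{t/(2\ve)}/B$, and that you explicitly dispose of the case where this weight is infinite (in which case no admissible $N$ exists).
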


\begin{proof}
    Observe that
    \[
    \om_N(t) \ge \sup_{k \in \N} \log \frac{t^k}{Da_k k!} \ge \log\left(\frac{1}{2D}\sum_{k = 0}^{+\infty}\frac{(t/2)^k}{a_k k!} \right)=:  h_{\mathbf{a}}(t)-\log(2D).
    \]
    It is clear from the definition that $h_{\mathbf{a}}$ is non-decreasing. From the assumption $\lim_{j\rightarrow+\infty}a_j^{1/j}=0$, it follows that for every $R>0$ there exists $C\in \R$ such that for all $t>0$ we have
    \begin{equation}
    \label{eq:hdivergent}
    h_{\mathbf{a}}(t) \ge C +Rt.
    \end{equation}
    This estimate follows since for every (small) $\ve > 0$ there exists $B>0$ such that $a_k\le B \ve^k$ for all $k \in \N$; and therefore
    \[
    \log\left(\sum_{k = 0}^{+\infty}\frac{(t/2)^k}{a_k k!}\right)\ge \frac{t}{2\ve}-\log(B),
    \]
    which gives \eqref{eq:hdivergent}.\par
    Let us set $f_{\mathbf{a}}(t) := \frac{h_{\mathbf{a}}(t/2)}{t}$, then, by \eqref{eq:hdivergent}, $\lim_{t\rightarrow+\infty}f_{\mathbf{a}}(t)=+\infty$. Finally set $g_{\mathbf{a}}:=\sqrt{f_{\mathbf{a}}}$ and so $\lim_{t\rightarrow+\infty}g_{\mathbf{a}}(t)=+\infty$. Moreover, we have $\lim_{t\rightarrow+\infty}\ve f_{\mathbf{a}}(t) - g_{\mathbf{a}}(t)=+\infty$ for every $\ve >0$. Thus for any arbitrary fixed $s > 0$, we get
    \begin{equation}
    \label{eq:final}
    s\om_N(t/2) - g_{\mathbf{a}}(t)t \ge s h_{\mathbf{a}}(t/2)-s\log(2D)-g_{\mathbf{a}}(t)t=t(sf_{\mathbf{a}}(t)-g_{\mathbf{a}}(t))-s\log(2D),
    \end{equation}
    hence $\lim_{t\rightarrow+\infty}s\om_N(t/2) - g_{\mathbf{a}}(t)t=+\infty$. This shows the statement for $d=1$. For $d \neq 1$, the result simply follows by choosing $s/d$ in \eqref{eq:final}.
    \end{proof}

\begin{proof}[Proof of Lemma \ref{lem:Lemma4General}]
We adapt the proof of \cite[Lemma 4]{MarkinIII}. So assume that the operator $A$ is actually unbounded. Then the spectrum $\si(A)$ is unbounded as well and so there exists a strictly increasing sequence of natural numbers $k(n)$ such that
\begin{itemize}
    \item[$(i)$] $n \le g_{\mathbf{a}}(k(n))$ (and $n \le k(n)$) for all $n\in\NN_{>0}$,
    \item[$(ii)$] in each ring $\{\la \in \C:~ k(n) < |\la| < k(n)+1\}$ there is a point $\la_n \in \si(A)$,
\end{itemize}
    and we can actually find a $0$-sequence $\ve_n$ with $0<\ve_n<\min(1/n,\ve_{n-1})$ such that $\la_n$ belongs to the ring
    \[
    r_n := \{\la \in \C:~ k(n)-\ve_n< |\la| < k(n)+1-\ve_n\}.
    \]

    As in Markin's proof, the subspaces $E_A(r_n)H$ are non-trivial and pairwise orthogonal. Thus in each of those spaces we may choose a non-trivial element $e_n$ such that
    \[
    e_n=E_A(r_n)e_n, \quad \langle e_i, e_j\rangle = \de_{i,j}.
    \]
    Now we define
    \[
    f:=\sum_{n = 1}^{+\oo} g_{\mathbf{a}}(k(n))^{-(k(n)+1-\ve_n)}e_n.
    \]
    As in \cite{MarkinIII}, the sequence of coefficients belongs to $\ell^2$, and
    \[
    E_A(r_n)f=g_{\mathbf{a}}(k(n))^{-(k(n)+1-\ve_n)}e_n, \quad E_A(\bigcup_{n \in \N_{>0}} r_n)f=f.
    \]
    Moreover, for every $t>0$, we have
    \begin{align*}
    \int_\C e^{2t|\la|}d\langle E_A(\la)f,f\rangle &= \int_\C e^{2t|\la|}d\langle E_A(\la)E_A(\bigcup_{n \in \N_{>0}}r_n)f,E_A(\bigcup_{n \in \N_{>0}}r_n)f\rangle\\
    &=\sum_{n = 1}^\oo \int_{r_n} e^{2t|\la|}d\langle E_A(\la)f,f\rangle\\
    &=\sum_{n = 1}^\oo \int_{r_n} e^{2t|\la|}d\langle E_A(\la)E_A(r_n)f,E_A(r_n)f\rangle\\
    &=\sum_{n=1}^\oo g_{\mathbf{a}}(k(n))^{-2(k(n)+1-\ve_n)} \int_{r_n} e^{2t|\la|}d\langle E_A(\la)e_n,e_n\rangle\\
    &\le \sum_{n=1}^\oo e^{-2\log (g_{\mathbf{a}}(k(n))(k(n)+1-\ve_n)}e^{2t(k(n)+1-\ve_n)} \underbrace{\|E_A(r_n)e_n\|^2}_{=1}\\
    &= \sum_{n = 1}^{+\oo}e^{-2(\log(g_{\mathbf{a}}(k(n))-t)(k(n)+1-\ve_n)}<+\oo,
    \end{align*}
    where we used in the first inequality that for $\la \in r_n$ we have $|\la| \le k(n)+1-\ve_n$, and in the final inequality that $g_{\mathbf{a}}$ tends to infinity and that $k(n)\ge n$. Thus we have shown that $ f \in \cE_{(G^1)}(A)$.\par

    Moreover, in analogy to \cite{MarkinIII}, and by a similar reasoning as above, we get for all $N\in\fF$ and $t>0$
    \begin{align}
    \label{eq:firsteq}
        \int_\C e^{2\om_N(t|\la|)}d\langle E_A(\la)f,f\rangle= \sum_{n=1}^\oo  g_{\mathbf{a}}(k(n))^{-2(k(n)+1-\ve_n)} \int_{r_n} e^{2\om_N(t|\la|)}d\langle E_A(\la)e_n,e_n\rangle.
    \end{align}
    Next we observe that for $\la \in r_n$ we have $\om_N(t|\la|) \ge \om_N(t(k(n)-\ve_n))\ge \om_N(t(k(n)-1))$. We continue to estimate the right hand side of \eqref{eq:firsteq} and infer
    \begin{align*}
        \int_\C e^{2\om_N(t|\la|)}d\langle E_A(\la)f,f\rangle &\ge \sum_{n=1}^\oo  g_{\mathbf{a}}(k(n))^{-2(k(n)+1-\ve_n)} e^{2\om_N(t(k(n)-1))} \underbrace{\int_{r_n} d\langle E_A(\la)e_n,e_n\rangle}_{=1}\\
        &\ge \sum_{n=1}^\oo  e^{2(\om_N(t(k(n)-1))-\log(g_{\mathbf{a}}(k(n)))(k(n)+1))}.
    \end{align*}
    By iterating \eqref{Rmixedom1} there exist $M\in\fF$, $s>0 $ (small) and $C > 0$ (large) such that for all $\la\in\CC$
    \[
    \om_N(t|\la|) \ge s\om_M(|\la|)-C,
    \]
    which allows us to continue the estimate and get
    \begin{equation*}
    \label{markinIIIcontequ}
    \int_\C e^{2\om_N(t|\la|)}d\langle E_A(\la)f,f\rangle \ge \sum_{n=1}^\oo  e^{2(s\om_M((k(n)-1))-C-\log(g_{\mathbf{a}}(k(n)))(k(n)+1))}=+\oo,
    \end{equation*}
    where the last equality follows from Lemma \ref{lem:technicallemma} (applied to the sequence $M$ and $d=2$).
    Thus we infer that  $f \notin \cE_{\{N\}}(A)$. Since $N \in \fF$ has been arbitrary we are done.
\end{proof}

Finally we are now in the position to prove our main theorem, a generalization of \cite[Thm. 5.1]{MarkinIII} which reads as follows.

\begin{theorem}
\label{thm:mainMarkin}
    Suppose there exists $\mathbf{a}=(a_j)_j$ such that $\lim_{j\rightarrow+\infty}a_j^{1/j}=0$ and a family $\fF$ of weight sequences as in Lemma \ref{lem:Lemma4General}. Assume that for any weak solution $y$ of \eqref{eq:evo} on $[0,+\infty)$, there is $N \in \fF$ such that $y \in \cE_{\{N\}}([0,+\infty),H)$. Then the operator $A$ is bounded.
\end{theorem}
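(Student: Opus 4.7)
The plan is to reduce the theorem to Lemma \ref{lem:Lemma4General} by showing that under the hypothesis the set equality
\[
\bigcup_{N \in \fF} \cE_{\{N\}}(A) = \cE_{(G^1)}(A)
\]
holds. The two inclusions are to be established by quite different means.

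For the inclusion $\bigcup_{N\in\fF}\cE_{\{N\}}(A) \subseteq \cE_{(G^1)}(A)$, I would argue directly from the definitions: if $f\in\cE_{\{N\}}(A)$ with $\|A^n f\|\le C h^n N_n$, write $N_n = n!\, n_n \le C' n!\, a_n$ using the uniform bound $\mathbf{a}$. Since $a_n^{1/n}\to 0$, for every $h'>0$ there is $n_0$ with $h\, a_n^{1/n}\le h'$ for $n\ge n_0$, hence $\|A^n f\|\le \tilde C_{h'} (h')^n n!$ for all $n$. This gives $f\in \cE_{(G^1)}(A)$. This inclusion is the easy half.

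The harder and central step is the reverse inclusion: given $f \in \cE_{(G^1)}(A)$, produce some $N\in\fF$ with $f\in\cE_{\{N\}}(A)$. The plan is to define
\[
y(t):=\sum_{n=0}^{+\infty}\frac{t^n}{n!}A^n f,
\]
and show that this is a weak solution of \eqref{eq:evo} on $[0,+\infty)$ (indeed an entire function $\CC\to H$). Convergence is immediate from $f\in\cE_{(G^1)}(A)$: for any $t$ one has $\|\tfrac{t^n}{n!}A^n f\|\le C_h (th)^n$, so choosing $h$ sufficiently small (relative to $t$) yields absolute convergence. Termwise differentiation combined with the closedness of $A$ (as a normal operator) shows $y'(t)=Ay(t)$, and by construction $y(t)\in C^\infty(A)$. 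Applying the standing hypothesis of the theorem yields some $N\in\fF$ with $y\in\cE_{\{N\}}([0,+\infty),H)$. Theorem \ref{thm:regsolution} then gives $y(t)\in\cE_{\{N\}}(A)$ for every $t\ge 0$, and specializing to $t=0$ produces $f=y(0)\in\cE_{\{N\}}(A)$, as desired.

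Having the set equality in hand, Lemma \ref{lem:Lemma4General} (whose hypotheses on $\fF$ and on the existence of the uniform bound $\mathbf{a}$ are precisely those assumed here) immediately yields that $A$ is bounded, completing the proof. The only delicate point I anticipate is the justification that the series $y(t)$ defines a weak solution of \eqref{eq:evo}, which rests on interchanging $A$ with the summation using the closedness of $A$ together with the rapid decay of the coefficients $\|A^n f\|/n!$ forced by membership in $\cE_{(G^1)}(A)$; the remaining steps are essentially bookkeeping against the already established machinery.
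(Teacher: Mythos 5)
Your proof is correct and follows essentially the same route as the paper: establish the set identity $\bigcup_{N\in\fF}\cE_{\{N\}}(A)=\cE_{(G^1)}(A)$ (the easy half from the uniform bound $\mathbf{a}$, the hard half by feeding a solution with initial value $f$ into the hypothesis and applying Theorem \ref{thm:regsolution} at $t=0$) and then invoke Lemma \ref{lem:Lemma4General}. The only cosmetic difference is that where you build the solution $y(t)=\sum_{n}\frac{t^n}{n!}A^nf=e^{tA}f$ by hand via closedness of $A$, the paper instead cites Markin's description of weak solutions together with the spectral-measure inclusion $\cE_{(G^1)}(A)\subseteq\bigcap_{t>0}D(e^{tA})$; both arguments are sound.
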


\begin{proof}
    Let $y$ be a weak solution of \eqref{eq:evo}. By assumption, there exists $N \in \fF$ such that $y \in \cE_{\{N\}}([0,+\infty),H)$. By Theorem \ref{thm:regsolution}, we get that for every $t \ge 0$, we have
    \[
    y(t) \in \cE_{\{N\}}(A),
    \]
    in particular $y(0) \in \cE_{\{N\}}(A)$. Via an application of \cite[Thm. 3.1]{MarkinI}, we infer
    \begin{equation}
    \label{eq:subset}
        \bigcap_{t>0}D(e^{tA})\subseteq \bigcup_{N \in \fF}\cE_{\{N\}}(A).
    \end{equation}
    On the other hand, since
    \[
    \bigcap_{t>0} D(e^{tA}) = \bigcap_{t>0} \{ f \in H:~\int_\C e^{2t \cR(\la)} \langle d E_A(\la)f,f \rangle <+\oo \},
    \]
    it is clear that
    \[
    \bigcap_{t>0} D(e^{tA}) \supseteq \bigcap_{t>0} \{ f \in H:~\int_\C e^{2t |\la|} \langle d E_A(\la)f,f \rangle <+\oo \} = \cE_{(G^1)}(A).
    \]
    Together with \eqref{eq:subset} this yields
    \[
    \bigcup_{N \in \fF}\cE_{\{N\}}(A) = \cE_{(G^1)}(A).
    \]
    Thus, by using Lemma \ref{lem:Lemma4General} we conclude that $A$ is bounded.
\end{proof}

When taking $\fF$ to be the family of all small Gevrey sequences, i.e. $\fF=\fG:=\{G^\al:~\al < 1\}$, we infer \cite[Thm. 5.1]{MarkinIII} (see also Remark \ref{rem:Markin}).

\subsection{An answer to the motivating question from Section \ref{sec:motivating}}\label{answersection}
The final goal is now to combine the information from Theorem \ref{thm:main} and Theorem \ref{thm:mainMarkin}.

So, suppose $\fF$ is a family of weight sequences such that:

\begin{itemize}
\item[$(i)$] $N\in\hyperlink{LCset}{\mathcal{LC}}$ for all $N\in\fF$ and $1=N_0=N_1$,

\item[$(ii)$]  $\fF$ has \eqref{Rmixedom1},

\item[$(iii)$] $\fF$ is uniformly bounded by some $\mathbf{a}=(a_j)_j$ with $\lim_{j\rightarrow+\infty}a_j^{1/j}=0$, and


\item[$(iv)$] for all $N\in\fF$ we have that $n$ is log-concave.
\end{itemize}

Note that $(iii)$ gives $\lim_{j\rightarrow+\infty}(n_j)^{1/j}=0$ for all $N \in \fF$. So $\fF$ is a family as required in Lemma \ref{lem:Lemma4General} and by $(i)$, $(iii)$ and $(iv)$ Theorem \ref{thm:main} can be applied to each $N\in\fF$, hence
$$\forall\;N\in\fF:\;\;\;\cE_{\{N\}}(I,H) \cong \cH^\oo_{\underline{\cN^*}_\fc}(\C,H).$$
Summarizing, we can reformulate Theorem \ref{thm:mainMarkin} as follows.

\begin{theorem}\label{finalmarkinthm}
Let $\fF$ be a family of weight sequences as considered before. Suppose that for every weak solution $y$ of \eqref{eq:evo} there exist $N \in \fF$ and $C,k>0$ such that $y$ can be extended to an entire function with
    \[
    \|y(z)\|\le Ce^{\om_{N^*}(k|z|)}.
    \]
    Then $A$ is already a bounded operator.
\end{theorem}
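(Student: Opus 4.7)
My plan is to simply combine the entire-function characterisation of the small ultradifferentiable classes in Theorem \ref{thm:main} with the abstract boundedness criterion in Theorem \ref{thm:mainMarkin}. Concretely, I would take an arbitrary weak solution $y$ of \eqref{eq:evo}, use the given pointwise bound to place $y$ in a suitable weighted space of entire functions $\cH^\infty_{v_{N^*,k}}(\C,H)$, and then invoke the isomorphism $E^{-1}$ of Theorem \ref{thm:main} to land in $\cE_{\{N\}}(I,H)$; the hypotheses of Theorem \ref{thm:mainMarkin} are then fulfilled and the conclusion follows.

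In more detail, first I would check that the standing assumptions $(i)$--$(iv)$ on $\fF$ make both Theorem \ref{thm:main} and Theorem \ref{thm:mainMarkin} applicable. For the former, fix $N\in\fF$. From $(i)$ we have $N\in\hyperlink{LCset}{\mathcal{LC}}$ with $N_0=N_1=1$ (in particular $N_0=1\ge N_1$); from the uniform bound $(iii)$ combined with $\lim_{j\to\infty}a_j^{1/j}=0$ we infer $\lim_{j\to\infty}n_j^{1/j}=0$; finally $(iv)$ gives log-concavity of $n$. Thus Theorem \ref{thm:main} applies to each $N\in\fF$ and yields in particular the topological isomorphism
\[
E:\cE_{\{N\}}([0,+\infty),H)\;\xrightarrow{\;\cong\;}\;\cH^\infty_{\underline{\cN^*}_\fc}(\C,H).
\]
For Theorem \ref{thm:mainMarkin}, the assumptions on $\fF$ are exactly conditions $(i)$, $(ii)$, $(iii)$ above plus the existence of the uniform bound $\mathbf{a}$, which coincide with the hypotheses of that theorem.

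Now take a weak solution $y$ of \eqref{eq:evo}. By the assumption of the statement there exist $N\in\fF$ and $C,k>0$ such that $y$ extends to an entire $H$-valued function with
\[
\|y(z)\|\le C\,e^{\omega_{N^*}(k|z|)}=\frac{C}{v_{N^*,k}(|z|)},\qquad z\in\C,
\]
i.e. $y\in\cH^\infty_{v_{N^*,k}}(\C,H)$. Since the weight system $\underline{\cN^*}_\fc$ is the inductive limit with respect to the parameter $k\in\N_{>0}$, this exactly says $y\in\cH^\infty_{\underline{\cN^*}_\fc}(\C,H)$ (replacing $k$ by any integer $\ge k$ if necessary, which only makes the weight smaller). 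Applying the inverse $E^{-1}$, which is restriction to $[0,+\infty)$, we conclude that $y|_{[0,+\infty)}\in\cE_{\{N\}}([0,+\infty),H)$.

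We have therefore verified the hypothesis of Theorem \ref{thm:mainMarkin}: for every weak solution $y$ of \eqref{eq:evo} there is some $N\in\fF$ with $y\in\cE_{\{N\}}([0,+\infty),H)$. An application of Theorem \ref{thm:mainMarkin} then yields that $A$ is bounded, which completes the proof. I do not anticipate any genuinely hard step here — the whole statement is essentially a translation of Theorem \ref{thm:mainMarkin} through the isomorphism of Theorem \ref{thm:main}; the only point requiring a moment's care is the straightforward verification that the bound $\|y(z)\|\le C e^{\omega_{N^*}(k|z|)}$ really places $y$ in the appropriate member of the inductive system $\underline{\cN^*}_\fc$.
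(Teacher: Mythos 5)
Your proposal is correct and follows exactly the route the paper takes: the growth bound places the entire extension of $y$ in a member of the inductive system $\underline{\cN^*}_\fc$, the isomorphism of Theorem \ref{thm:main} (applicable by the standing assumptions $(i)$, $(iii)$, $(iv)$ on $\fF$) transports this to $y\in\cE_{\{N\}}([0,+\infty),H)$, and Theorem \ref{thm:mainMarkin} then gives boundedness of $A$. The paper presents this as an immediate reformulation rather than a separate proof, and your write-up fills in the same (routine) verifications.
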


Theorem \ref{finalmarkinthm} applies to the family $\fG:=\{G^{\alpha}: 0\le\alpha<1\}$ of all small Gevrey sequences.

\appendix
\section{On dual weight sequences and Matuszewska indices}\label{dualsection}

The growth and regularity assumptions for weight sequences $M$ in Theorem \ref{thm:main} or for $N\in\fF$ in Lemma \ref{lem:Lemma4General}, in the technical Proposition \ref{prop:uniformbound} and in Theorems \ref{thm:mainMarkin}, \ref{finalmarkinthm} are by far not standard in the theory of ultradifferentiable (and ultraholomorphic) functions. More precisely the sequences under consideration are required to grow very slowly or to be even non-increasing. This is due to the fact that in Theorem \ref{thm:main} resp. in Theorem \ref{finalmarkinthm} the {\itshape conjugate sequence} $M^{*}$ resp. $N^{*}$ plays the crucial role in order to restrict the growth. Therefore the conjugate sequence(s) is (are) required to satisfy the frequently used conditions in the weight sequence setting; e.g. in order to work with the associated function $\omega_{M^{*}}$. \par

We are interested in studying and constructing such ``exotic/non-standard'' sequences and may ask how they are ``naturally'' related to standard sequences. On the one hand, as already stated in Section \ref{conjugatesect}, formally we can start with a standard/regular sequence $R=M^{*}$ and then get $M$ by the formula \eqref{Mstardef} which relates $M$ and $M^{*}$ by a one-to-one correspondence; i.e. take $M=R^{*}$. However, in this Section the aim is to give a completely different approach and to show how such ``exotic'' small sequences $M$ are appearing and can be introduced in a natural way. The main idea is to start with $N\in\hyperlink{LCset}{\mathcal{LC}}$ (and having some more standard requirements) and then consider the so-called {\itshape dual sequence} $D$ from \cite[Sect. 2.1.5]{dissertationjimenez}. However, in order to proceed we also have to recall and study the notion of {\itshape Matuszewska indices.}

\subsection{Matuszewska indices}\label{Matuszeskasection}
We recall some facts and definitions from \cite[Sect. 2.1.2]{dissertationjimenez}, see also the literature citations therein and especially \cite{regularvariation}. Moreover we refer to \cite[Sect. 3]{index}. Note that in \cite{dissertationjimenez} and in \cite{index} a sequence $M\in\RR_{>0}^{\NN}$ is called a weight sequence if it satisfies all requirements from the class $\hyperlink{LCset}{\mathcal{LC}}$ except necessarily $M_0\le M_1$, see \cite[Sect. 1.1.1, p. 29; Def. 1.1.8, p.32]{dissertationjimenez} and \cite[Sect. 2.2, Sect. 3.1]{index}.\vspace{6pt}

First, for any given sequence $\mathbf{a}=(a_p)_p\in\RR_{>0}^{\NN}$ the {\itshape upper Matuszewska index} $\alpha(\mathbf{a})$ is defined by
\begin{align*}
    \alpha(\mathbf{a}):=&\inf\{\alpha\in\RR: \frac{a_p}{p^{\alpha}}\;\text{is almost decreasing}\}\\
    =&\inf\{\alpha\in\RR: \exists\;H\ge 1\;\forall\;1\le p\le q:\;\;\;\frac{a_q}{q^{\alpha}}\le H\frac{a_p}{p^{\alpha}}\},
\end{align*}
and the {\itshape lower Matuszewska index} $\beta(\mathbf{a})$ by
\begin{align*}
    \beta(\mathbf{a}):=&\sup\{\beta\in\RR: \frac{a_p}{p^{\beta}}\;\text{is almost increasing}\}\\
    =&\sup\{\beta\in\RR: \exists\;H\ge 1\;\forall\;1\le p\le q:\;\;\;\frac{a_p}{p^{\beta}}\le H\frac{a_q}{q^{\beta}}\}.
\end{align*}
Note that $\beta(\mathbf{a})>0$ implies, in particular, $\lim_{p\rightarrow+\infty}a_p=+\infty$.\vspace{6pt}

The aim is to give a connection between these indices and the notion of the conjugate sequence introduced in this work. The following comments $(a)-(e)$ and Lemma \ref{betaindexlemma} have been made resp. suggested by the anonymous referee:

First put $\mathbf{g}^1:=(p)_{p\in\NN}$ and $\mathbf{a}^{-1}:=(a_p^{-1})_{p\in\NN}$. Consequently, by definition of the above indices the following relations are valid, see also \cite[Rem. 2.6, Prop. 3.6]{index} applied to $r=1$ and $s=-1$:
\begin{equation}\label{firstnewid}
\alpha(\mathbf{g}^1\mathbf{a}^{-1})=1+\alpha(\mathbf{a}^{-1})=1-\beta(\mathbf{a}),
\end{equation}
and
\begin{equation}\label{firstnewid1}
\beta(\mathbf{g}^1\mathbf{a}^{-1})=1+\beta(\mathbf{a}^{-1})=1-\alpha(\mathbf{a}).
\end{equation}
The idea is now to apply these identities to $\mathbf{a}\equiv\mu$ and so $\mathbf{g}^1\mathbf{a}^{-1}$ corresponds to $\mu^{*}$, i.e. the sequence of quotients of the conjugate sequence $M^{*}$ (recall \eqref{Mstardef}, \eqref{Mstardefquot}). Combining this information with results from \cite{index} we summarize:

\begin{itemize}
\item[$(a)$] By \eqref{firstnewid} one has $\alpha(\mathbf{g}^1\mathbf{a}^{-1})\le 1$ if and only if $\beta(\mathbf{a})\ge 0$ resp. with strict inequalities. In particular, if $M$ is log-convex, then $\beta(\mu)\ge 0$ and so $\alpha(\mathbf{\mu}^{*})\le 1$. Conversely, if $\alpha(\mathbf{\mu}^{*})<1$ and so $\beta(\mu)>0$, then $M$ is equivalent to a log-convex sequence $L$ and more precisely the equivalence is even established on the level of quotient sequences (see the proof of \cite[Prop. 4.15]{JimenezGarridoSanz} and $(a)$ in Remark \ref{almoststuff} for the analogous estimates in \eqref{almoststuffequ1}).

    This should be compared with \cite[Thm. 3.16, Cor. 3.17]{index} applied to $M^{*}$ resp. $L^{*}$ and $(v)$ in Section \ref{sec:propertiesconjugate}. (Since $L^{*}$ is equivalent to $M^{*}$ also $M^{*}$ has \hyperlink{mg}{$(\on{mg})$}.)

Indeed, if $\beta(\mu)>0$, then $M$ is equivalent to a sequence $L\in\hyperlink{LCset}{\mathcal{LC}}$ because $\lim_{p\rightarrow+\infty}\mu_p=\lim_{p\rightarrow+\infty}\lambda_p=+\infty$ and so one can achieve normalization by changing finitely many terms of $L$ at the beginning; see $(iv)$ in Remark \ref{dualremark} and also the proof in Lemma \ref{dual1}.\vspace{6pt}

 Similarly, $\beta(\mathbf{g}^1\mathbf{a}^{-1})\ge 0$ if and only if $\alpha(\mathbf{a})\le 1$ holds by \eqref{firstnewid1} resp. with strict inequalities and the above comments apply when $M$ is replaced by $M^{*}$ and $M^{*}$ by $M$.

 \item[$(b)$] Using this knowledge, we can change the assumptions in Theorem \ref{thm:main} as follows: In order to proceed we take $M \in \R^\N_{>0}$ such that $\alpha(\mu)<1$ is valid. Because then $\beta(\mu^{*})>0$, hence $M^{*}$ is equivalent to a sequence $L^{*}\in\hyperlink{LCset}{\mathcal{LC}}$ and this property is sufficient to proceed by taking into account $(ii)$ in Remark \ref{mainthmremark} and $(ii)$ in Section \ref{sec:propertiesconjugate}.

     The same comment applies to Theorem \ref{equivalencesmallsequthm}; i.e. we are taking $M,N \in \R^\N_{>0}$ with $\alpha(\mu),\alpha(\nu)<1$.\vspace{6pt}

     In order to ensure this requirement we give several ideas: First, when given $M$, in Theorem \ref{dual14} we deal with the corresponding dual sequence $D$ and so we want to have $\alpha(\delta)<1$. This can be expressed in terms of $M$; see the next Section \ref{dualsequencesection} for details.

     However, even directly for $M$ we can get $\alpha(\mu)<1$; in this context see also Theorem \ref{dualdual}: For this let $M\in\RR_{>0}^{\NN}$ be given and assume that either $\beta(\nu)>1$ or that $\alpha(\mu)<+\infty$. In the first case we take $M^{-1}:=(M^{-1}_p)_{p\in\NN}$ and in the second one $M$ directly if already $\alpha(\mu)<1$ resp. $G^{-r+1}M:=(p!^{-r+1}M_p)_{p\in\NN}$ if $1\le\alpha(\mu)<r$.

 \item[$(c)$] In $(i)$ in Lemma \ref{lem:weightedentireiso} we assume $M\in\RR_{>0}^{\NN}$ such that $0<\beta(\mu)\le\alpha(\mu)<+\infty$: First, $0<\beta(\mu)$ yields that $M$ is equivalent to $L\in\hyperlink{LCset}{\mathcal{LC}}$ and since the equivalence is established on the level of the quotient sequences (see $(a)$ above) we get $0<\beta(\lambda)\le\alpha(\lambda)<+\infty$, too.

     By \cite[Thm. 3.16, Cor. 3.17]{index} property $\alpha(\lambda)<+\infty$ is equivalent to having \hyperlink{mg}{$(\on{mg})$} for $L$. Moreover, by combining \cite[Prop. 3.4]{subaddlike} and \cite[Thm. 3.11]{index} applied to the sequence $L$ and $\beta=0$ the second assumption $0<\beta(\lambda)$ yields \eqref{eq:Momega1} for $L$. Since the weighted classes appearing in $(ii)$, $(iii)$ in Lemma \ref{lem:weightedentireiso} are preserved under equivalence of weight sequences (recall $(ii)$ in Remark \ref{mainthmremark}) we are done. (Note that $M$ has both \hyperlink{mg}{$(\on{mg})$} and \eqref{eq:Momega1} too since equivalence preserves these requirements; for \hyperlink{mg}{$(\on{mg})$} this is clear and concerning \eqref{eq:Momega1} see the proof of \cite[Cor. 3.3]{subaddlike}.)

 \item[$(d)$] In Corollary \ref{cor:weightedentireiso}, first the aim is to apply Lemma \ref{lem:weightedentireiso} to $M^{*}$ and therefore we assume that $M\in\RR_{>0}^{\NN}$ is given such that $0<\beta(\mu^{*})\le\alpha(\mu^{*})<+\infty$ which is equivalent to requiring $-\infty<\beta(\mu)\le\alpha(\mu)<1$ (see $(a)$). The first estimate is clearly true if $M$ is log-convex and by $(b)$ the second one is sufficient to apply Theorem \ref{thm:main} which is needed in the proof of Corollary \ref{cor:weightedentireiso} as well.

 \item[$(e)$] Finally, for the sake of completeness we comment on the meaning of \eqref{om1} and \eqref{eq:Momega1} in terms of growth indices: First, let us consider the auxiliary sequence $R\in\RR_{>0}^{\NN}$ defined via the quotients $\rho=(\rho_p)_p$ with $\rho_p:=(M_p)^{1/p}$, $p\ge 1$. Hence $R_p=\prod_{i=1}^p(M_i)^{1/i}$ with $R_0=1$ (empty product) and so $M\in\hyperlink{LCset}{\mathcal{LC}}$ implies $R\in\hyperlink{LCset}{\mathcal{LC}}$. Then \cite[Thm. 3.11]{index} applied to $R$ and $\beta=0$ yields that \eqref{eq:Momega1} for $M\in\hyperlink{LCset}{\mathcal{LC}}$ is equivalent to $\beta(\rho)>0$.

     On the other hand recall that \eqref{om1} means $\alpha(\omega_M)<+\infty$; i.e. \cite[Thm. 2.11, Cor. 2.14]{index} applied to $\sigma=\omega_M$ with $\alpha$ denoting the index for functions from \cite[Sect. 2.2]{index}.\vspace{6pt}

      Summarizing, \cite[Thm. 3.1]{subaddlike} precisely shows that for any $M\in\hyperlink{LCset}{\mathcal{LC}}$ we have $\alpha(\omega_M)<+\infty$ if and only if $\beta(\rho)>0$.\vspace{6pt}

      If $M$ has in addition \hyperlink{mg}{$(\on{mg})$} (e.g. like in $(c)$ above), then $\beta(\rho)=\beta(\mu)$ and $\alpha(\rho)=\alpha(\mu)$ holds true: Both equalities follow by the estimates $\rho_p=(M_p)^{1/p}\le\mu_p\le A\rho_p$ for some constant $A\ge 1$ and all $p\ge 1$; recall \eqref{mucompare1} for the first and e.g. \cite[Lemma 3.1 $(iii)$]{index} for the second one.
\end{itemize}

Based on comment $(e)$ the following question appears: What can be said about the relation between $\beta(\rho)$ and $\beta(\mu)$ in general; i.e. when $M$ does not have \hyperlink{mg}{$(\on{mg})$}. In order to prove relation \eqref{betaindexcomparison}, which has been claimed by the referee, for technical reasons we have to recall some more notation also used in Section \ref{dualsequencesection} below:

Let $M\in\hyperlink{LCset}{\mathcal{LC}}$ be given. We introduce the {\itshape counting function}
\begin{equation*}\label{sigmam}
\Sigma_M(t):=|\{p\in\NN_{>0}: \mu_p\le t\}|,\;\;\;t\ge 0.
\end{equation*}
By definition it is obvious that $\Sigma_M(t)=0$ on $[0,\mu_1)$ and $\Sigma_M(t)=p$ on $[\mu_p,\mu_{p+1})$ provided that $\mu_p<\mu_{p+1}$. Recall that for $M\in\hyperlink{LCset}{\mathcal{LC}}$ we have $\lim_{p\rightarrow+\infty}\mu_p=+\infty$. Moreover, we recall the known integral representation formula (see \cite[1.8. III]{mandelbrojtbook} and also \cite[$(3.11)$]{Komatsu73}):
\begin{equation}\label{assointrepr}
\omega_M(t)=\int_0^t\frac{\Sigma_M(u)}{u}du=\int_{\mu_1}^t\frac{\Sigma_M(u)}{u}du.
\end{equation}

\begin{lemma}\label{betaindexlemma}
Let $M\in\hyperlink{LCset}{\mathcal{LC}}$ be given and let $R$ be the sequence given by $R_p=\prod_{i=1}^p(M_i)^{1/i}$, $p\in\NN$. Then we get
\begin{equation}\label{betaindexcomparison}
\beta(\rho)\ge\beta(\mu)\ge 0.
\end{equation}
\end{lemma}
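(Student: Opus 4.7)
First, the inequality $\beta(\mu) \geq 0$ is immediate: since $M \in\hyperlink{LCset}{\mathcal{LC}}$, the quotients $\mu$ are non-decreasing, so $\mu_p = \mu_p/p^0$ is almost increasing with constant $H = 1$, hence $\beta(\mu) \geq 0$.

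For the main inequality $\beta(\rho) \geq \beta(\mu)$, the plan is to fix an arbitrary $\beta$ with $0 \leq \beta < \beta(\mu)$ and show $\beta \leq \beta(\rho)$; letting $\beta \nearrow \beta(\mu)$ then yields the claim. By definition of $\beta(\mu)$ there exists $H \geq 1$ with $\mu_p p^{-\beta} \leq H \mu_q q^{-\beta}$ for all $1 \leq p \leq q$. Taking logarithms produces the two-sided estimate
\begin{align*}
\log \mu_i &\geq \log \mu_p + \beta \log(i/p) - \log H \quad \text{for } i \geq p, \\
\log \mu_i &\leq \log \mu_p + \beta \log(i/p) + \log H \quad \text{for } 1 \leq i \leq p.
\end{align*}

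The key observation is the identity $\log \rho_p = \tfrac{1}{p}\log M_p = \tfrac{1}{p}\sum_{i=1}^p \log \mu_i$, i.e.\ $\log \rho_p$ is the Ces\`aro average of $(\log \mu_i)_{i\geq 1}$. Fixing $1 \leq p \leq q$, I would write $q\log\rho_q - p\log\rho_p = \sum_{i=p+1}^q \log\mu_i$, apply the first estimate above and Stirling's formula $\sum_{i=p+1}^q \log(i/p) = q\log(q/p) - (q-p) + O(\log q)$ to get a lower bound of the shape $(q-p)\log\mu_p + \beta q\log(q/p) - \beta(q-p)$ modulo bounded error. Simultaneously, combining the second estimate with $\sum_{i=1}^p \log(i/p) = -p + O(\log p)$ yields $\log\mu_p \geq \log\rho_p + \beta - \log H - O(\log p/p)$. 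Substituting this lower bound into the previous one, the $\pm\beta(q-p)$ contributions cancel exactly, and after dividing by $q$ one is left with
$$\log\rho_q \geq \log\rho_p + \beta\log(q/p) - C$$
for some constant $C = C(\beta, H)$ independent of $1 \leq p \leq q$ (finitely many small-$p$ exceptions being absorbed into $C$). Exponentiating gives $\rho_p/p^\beta \leq e^C \rho_q/q^\beta$, so $\rho_p/p^\beta$ is almost increasing and $\beta(\rho) \geq \beta$.

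The main obstacle is the careful bookkeeping of Stirling's error terms and, crucially, verifying the cancellation of the $\beta(q-p)$ contributions: without it, a term linear in $q-p$ would survive and prevent the almost-increasing conclusion. This cancellation is precisely what makes the inequality $\beta(\rho) \geq \beta(\mu)$ hold in general and is the heart of the argument.
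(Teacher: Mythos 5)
Your proof is correct, but it takes a genuinely different and more elementary route than the paper. You work directly with the identity $\log\rho_p=\frac1p\sum_{i=1}^p\log\mu_i$ (valid since $M_0=1$ for $M\in\hyperlink{LCset}{\mathcal{LC}}$) and propagate the almost-increasing estimate for $(\mu_p/p^\beta)_p$ through the Ces\`aro average; the cancellation you flag as the crux does go through: writing $q\log\rho_q=p\log\rho_p+\sum_{i=p+1}^q\log\mu_i$, using $\sum_{i=p+1}^q\log(i/p)\ge q\log(q/p)-(q-p)$ (integral comparison, no error term needed for the lower bound) and $\log\mu_p\ge\log\rho_p+\beta-\log H-\ep_p$ with $\ep_p=O(\beta\log p/p)$, the terms $\pm\beta(q-p)$ cancel exactly and one arrives at $\log\rho_q\ge\log\rho_p+\beta\log(q/p)-\frac{q-p}{q}(2\log H+\ep_p)$, whose error is uniformly bounded over \emph{all} $1\le p\le q$ (so the ``finitely many small-$p$ exceptions'' you mention are not even needed; $\log p/p$ is bounded on $\NN_{>0}$). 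The paper instead argues through the counting function $\Sigma_M$ and the integral representation \eqref{assointrepr} of $\om_M$, converting $\beta<\beta(\mu)$ into the estimate \eqref{betaindexlemmaequ2} for $\om_M$ and then matching it against the analogous characterization \eqref{betaindexlemmaequ4} of $\beta(\rho)$ obtained from \cite[Thm. 3.1]{subaddlike} and \cite[Thm. 3.11]{index}. Your argument is self-contained and avoids the external characterizations entirely; the paper's route is longer but produces the intermediate inequalities \eqref{betaindexlemmaequ}--\eqref{betaindexlemmaequ4} for $\Sigma_M$ and $\om_M$, which the authors explicitly note may be useful elsewhere.
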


\begin{proof}
The arguments and techniques are based on the proofs of the characterization \cite[Thm. 3.1]{subaddlike} and of \cite[Lemma 12, $(2)\Rightarrow(4)$]{BonetMeiseMelikhov07} and the obtained estimates might have applications in different contexts as well.\vspace{6pt}

First, recall that $\beta(\mu)\ge 0$ because $M$ is assumed to be log-convex. If $\beta(\mu)=0$, then \eqref{betaindexcomparison} is trivial since $R$ is log-convex too and hence $\beta(\rho)\ge 0$. On the other hand, if $\beta(\rho)=0$ then $\beta(\rho)=\beta(\mu)$ has to be valid and so \eqref{betaindexcomparison} is clear, too. So let from now on $\beta(\mu)>0$ and $\beta(\rho)>0$.\vspace{6pt}

We take $0\le\beta<\beta(\mu)$ and hence \cite[Thm. 3.11, $(v)\Leftrightarrow(vii)$]{index} gives
\begin{equation}\label{betaindexlemmaequ0}
\exists\;k\in\NN_{\ge 2}:\;\;\;\liminf_{p\rightarrow+\infty}\frac{\mu_{kp}}{\mu_p}>k^{\beta},
\end{equation}
hence $\mu_{kp}>k^{\beta}\mu_p$ holds for all $p\ge p_{\beta,k}$. Then let $t\ge\mu_{p_{\beta,k}}$ and so $\mu_p\le t<\mu_{p+1}$ for some $p\ge p_{\beta,k}$. We get by the definition of the counting function $\Sigma_M(t)=p$ and also  $\Sigma_M(k^{\beta}t)\le\Sigma_M(k^{\beta}\mu_{p+1})<k(p+1)=k\Sigma_M(t)+k$ follows. Consequently, so far we have shown that
\begin{equation}\label{betaindexlemmaequ}
\forall\;0\le\beta<\beta(\mu)\;\exists\;k\in\NN_{\ge 2}\;\exists\;D\ge 1\;\forall\;t\ge 0:\;\;\;\Sigma_M(k^{\beta}t)\le k\Sigma_M(t)+D.
\end{equation}
Using \eqref{betaindexlemmaequ} and the integral representation \eqref{assointrepr} we estimate for all $t\ge\mu_1/k^{\beta}$ as follows:
\begin{align*}
\omega_M(k^{\beta}t)&=\int_{\mu_1}^{k^{\beta}t}\frac{\Sigma_M(u)}{u}du=\int_{\mu_1/k^{\beta}}^t\frac{\Sigma_M(k^{\beta}v)}{v}dv
\\&
\le k\int_0^t\frac{\Sigma_M(v)}{v}dv+D\int_{\mu_1/k^{\beta}}^t\frac{1}{v}dv=k\omega_M(t)+D\log(tk^{\beta}/\mu_1).
\end{align*}
Since $\omega_M(t)=o(\log(t))$ as $t\rightarrow+\infty$ we have shown now
\begin{equation}\label{betaindexlemmaequ1}
\forall\;0\le\beta<\beta(\mu)\;\exists\;k\in\NN_{\ge 2}\;\exists\;D_1\ge 1\;\forall\;t\ge 0:\;\;\;\omega_M(k^{\beta}t)\le(k+1)\omega_M(t)+D_1.
\end{equation}
Then note that $k+1\le 2k$ for all $k\in\NN_{\ge 2}$ and, when \eqref{betaindexlemmaequ0} is valid for some $k$, then also for all $k^i$ because by iteration we get $\frac{\mu_{k^ip}}{\mu_p}>k^{i\beta}$. Thus, when $0\le\beta'<\beta(\mu)$ is given we can choose $\beta$ with $\beta'<\beta<\beta(\mu)$ and $k$ sufficiently large to ensure $(2k)^{\beta'}\le k^{\beta}\Leftrightarrow 2^{\beta'}\le k^{\beta-\beta'}$. Therefore, when taking $k_1:=2k$ finally we arrive at
\begin{equation}\label{betaindexlemmaequ2}
\forall\;0\le\beta'<\beta(\mu)\;\exists\;k_1\in\NN_{\ge 2}\;\exists\;B\ge 1\;\forall\;t\ge 0:\;\;\;\omega_M(k_1^{\beta'}t)\le k_1\omega_M(t)+B.
\end{equation}

Now we move to the study of $\beta(\rho)$: Let $0\le\beta<\beta(\rho)$ and so \cite[Thm. 3.11, $(v)\Leftrightarrow(vii)$]{index} applied to $R$ gives
$$\exists\;k\in\NN_{\ge 2}:\;\;\;\liminf_{p\rightarrow+\infty}\frac{(M_{kp})^{1/(kp)}}{(M_p)^{1/p}}>k^{\beta}.$$
Then we replace in \cite[Thm. 3.1, $(i)\Rightarrow(ii)$]{subaddlike} the value $h$ by $k^{\beta}$ and $L$ by $k$ and get
\begin{equation}\label{betaindexlemmaequ3}
\forall\;0\le\beta<\beta(\rho)\;\exists\;k\in\NN_{\ge 2}\;\exists\;B\ge 1\;\forall\;t\ge 0:\;\;\;\omega_M(k^{\beta}t)\le 2k\omega_M(t)+B.
\end{equation}
Analogously as before this implies
\begin{equation}\label{betaindexlemmaequ4}
\forall\;0\le\beta'<\beta(\rho)\;\exists\;k_1\in\NN_{\ge 2}\;\exists\;B\ge 1\;\forall\;t\ge 0:\;\;\;\omega_M(k_1^{\beta'}t)\le k_1\omega_M(t)+B.
\end{equation}
Conversely, using \eqref{betaindexlemmaequ4} and following \cite[Thm. 3.1, $(ii)\Rightarrow(i)$]{subaddlike} and replacing there $h$ by $k_1^{\beta'}$ and $L'$ by $k_1$ then we get (with the same choice $k_1$ for given $\beta'$) the estimate
\begin{equation*}\label{betaindexlemmaequ5}
\forall\;0\le\beta'<\beta(\rho)\;\exists\;k_1\in\NN_{\ge 2}:\;\;\;\liminf_{p\rightarrow+\infty}\frac{(M_{k_1p})^{1/(k_1p)}}{(M_p)^{1/p}}>k_1^{\beta'}.
\end{equation*}
Therefore, in order to verify $0\le\beta<\beta(\rho)$, equivalently one can use \eqref{betaindexlemmaequ4} and by comparing this with \eqref{betaindexlemmaequ2} above we have shown $\beta(\rho)\ge\beta(\mu)$.
\end{proof}

\subsection{Dual sequences}\label{dualsequencesection}
Let $N\in\hyperlink{LCset}{\mathcal{LC}}$ be given. We define a new sequence $D$, called its {\itshape dual sequence}, in terms of its quotients $\delta=(\delta_p)_{p\in\NN}$ as follows, see \cite[Def. 2.1.40, p. 81]{dissertationjimenez}:
\begin{equation*}\label{dualdef}
\forall\;p\ge\nu_1(\ge 1):\;\;\;\delta_{p+1}:=\Sigma_N(p),\hspace{20pt}\delta_{p+1}:=1\;\;\;\forall\;p\in\Z,\;-1\le p<\nu_1,
\end{equation*}
and set $D_p:=\prod_{i=0}^p\delta_i$. Hence $D\in\hyperlink{LCset}{\mathcal{LC}}$ with $1=D_0=D_1$ follows by definition.

Please note that in \cite{dissertationjimenez} and \cite{index} a different notation for the counting function and the sequence of quotients of a weight sequence has been used and that concerning the definition of the sequence of quotients an index shift appears; see \cite[Def. 1.1.2, Def. 2.1.27]{dissertationjimenez} for details.\vspace{6pt}


In \cite[Thm. 2.1.43, p. 82]{dissertationjimenez} the following result has been shown:

\begin{theorem}\label{Javithm2143}
Let $N\in\hyperlink{LCset}{\mathcal{LC}}$ be given and such that
\begin{equation}\label{betweenmgdc}
\exists\;A\ge 1\;\forall\;p\in\NN:\;\;\;\nu_{p+1}\le A\nu_p.
\end{equation}
Then we get $\alpha(\nu)=\frac{1}{\beta(\delta)}$ and $\beta(\nu)=\frac{1}{\alpha(\delta)}$.
\end{theorem}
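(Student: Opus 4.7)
The plan is to exploit the generalized-inverse nature of the dual operation: $\delta$ is essentially the inverse of the nondecreasing sequence $\nu$, so one expects the power exponents appearing in the two Matuszewska indices to be related reciprocally. The proof then reduces to four inequalities that follow a common template.

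First I would record the key biconditional obtained directly from the definition of $\Sigma_N$: for $t \ge \nu_1$ and $k \ge 1$,
\begin{equation*}
\delta_{t+1} \ge k \iff \nu_k \le t. \tag{$\star$}
\end{equation*}
Combining $(\star)$ with the hypothesis \eqref{betweenmgdc} yields, for every $m \ge \nu_1 + 1$, the two-sided comparison
\begin{equation*}
\frac{m}{A} < \nu_{\delta_m} \le m \le \nu_{\delta_m + 1} \le A\nu_{\delta_m},
\end{equation*}
i.e.\ ``$\delta$ inverts $\nu$ up to the multiplicative constant $A$''. This is the only place \eqref{betweenmgdc} enters the argument.

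For the inequality $\alpha(\nu) \ge 1/\beta(\delta)$ (and, by the same pattern, $\beta(\nu) \le 1/\alpha(\delta)$), I would fix $\alpha > \alpha(\nu)$ with $\nu_q/q^\alpha \le H \nu_p/p^\alpha$ for all $1 \le p \le q$, pick $\nu_1 + 1 \le m \le n$, and substitute $p := \delta_m$, $q := \delta_n$. The comparison above gives $\nu_p > m/A$ and $\nu_q \le n$, so
\begin{equation*}
\frac{n}{A} < \nu_q \le H \nu_p \left(\frac{q}{p}\right)^\alpha \le Hm\left(\frac{q}{p}\right)^\alpha,
\end{equation*}
which rearranges to $\delta_m/m^{1/\alpha} \le (AH)^{1/\alpha} \delta_n/n^{1/\alpha}$; after absorbing the finitely many small indices into the constant, $\delta_p/p^{1/\alpha}$ is almost increasing and hence $\beta(\delta) \ge 1/\alpha$. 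The dual inequality $\beta(\nu) \le 1/\alpha(\delta)$ follows identically, starting from $\alpha > \alpha(\delta)$ and the almost-decrease of $\delta_m/m^\alpha$.

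For the reverse inequalities $\alpha(\nu) \le 1/\beta(\delta)$ and $\beta(\nu) \ge 1/\alpha(\delta)$, I would instead evaluate the almost-monotonicity of $\delta_m/m^\beta$ at the pair $(m,n) := (\nu_p + 1,\ \nu_{q+1})$ for $1 \le p \le q$. $(\star)$ gives the one-sided bounds $\delta_{\nu_p+1} \ge p$ and $\delta_{\nu_{q+1}} \le q$ (the second uses $\delta_{\nu_{q+1}} = \max\{k:\nu_k<\nu_{q+1}\} \le q$). In the first case one obtains $p/(\nu_p+1)^\beta \le H q/\nu_{q+1}^\beta$, and with $\nu_q \le \nu_{q+1}$ and $\nu_p + 1 \le 2\nu_p$ (valid since $\nu_1 \ge 1$ for $N \in \mathcal{LC}$) this reduces to $\nu_q/q^{1/\beta} \le 2H^{1/\beta}\,\nu_p/p^{1/\beta}$, hence $\alpha(\nu) \le 1/\beta$. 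The analogous computation with the almost-increase of $\nu_p/p^\beta$ produces an almost-decrease for $\delta_m/m^{1/\beta}$ and yields $\beta(\nu) \ge 1/\alpha(\delta)$.

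The real obstacle is the book-keeping: $\nu$ may have plateaus and $\delta$ may have correspondingly large jumps, so neither sequence is an exact inverse of the other. The biconditional $(\star)$ is designed to be insensitive to this phenomenon—it yields only the one-sided bounds $\delta_{\nu_p+1} \ge p$ and $\delta_{\nu_{q+1}} \le q$, which is precisely what each direction of the argument requires. The hypothesis \eqref{betweenmgdc} is used only in the two directions where $\nu_p$ must be recovered quantitatively from a value $m$ satisfying $\delta_m = p$; without a bound on $\nu_{p+1}/\nu_p$ the gap $[\nu_p,\nu_{p+1}]$ could be arbitrarily large and the comparison $\nu_{\delta_m} \asymp m$ would break down.
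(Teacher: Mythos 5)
The paper does not actually prove this theorem: it is quoted from \cite[Thm.~2.1.43]{dissertationjimenez}, so there is no in-paper argument to compare yours with, and your proposal has to stand on its own. Its core idea --- $\delta$ is a generalized inverse of $\nu$, the biconditional $(\star)$ supplies the one-sided bounds $\delta_{\lceil\nu_p\rceil+1}\ge p$ and $\delta_{\lceil\nu_{q+1}\rceil}\le q$, and \eqref{betweenmgdc} enters exactly where $\nu_{\delta_m}$ must be bounded \emph{below} by a multiple of $m$ --- is sound, and the two inequalities you compute in full ($\beta(\delta)\ge 1/\alpha(\nu)$ by substituting $p=\delta_m$, $q=\delta_n$ into the almost-decrease of $\nu_\cdot/\cdot^{\alpha}$, and $\alpha(\nu)\le 1/\beta(\delta)$ by evaluating the almost-increase of $\delta_\cdot/\cdot^{\beta}$ at $\nu$-indexed points) are correct up to harmless constants. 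Two cosmetic repairs there: $\nu_p$ need not be an integer, so the evaluation indices should carry ceilings, and the two-sided comparison really reads $(m-1)/A<\nu_{\delta_m}\le m-1<\nu_{\delta_m+1}\le A\nu_{\delta_m}$ (also, in the first computation you quote the bounds ``$\nu_p>m/A$ and $\nu_q\le n$'' but the chain actually uses $\nu_q>n/A$ and $\nu_p\le m$; both hold, so nothing breaks).

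The half of the argument you leave as ``analogous'' is, however, cross-wired. The four implications are: (A1) $\nu$-almost-decrease $\Rightarrow$ $\delta$-almost-increase and (A2) $\nu$-almost-increase $\Rightarrow$ $\delta$-almost-decrease, both by your first template (substitute $p=\delta_m$, $q=\delta_n$; both need \eqref{betweenmgdc}); and (B1) $\delta$-almost-increase $\Rightarrow$ $\nu$-almost-decrease and (B2) $\delta$-almost-decrease $\Rightarrow$ $\nu$-almost-increase, both by your second template (neither needs \eqref{betweenmgdc}). You prove A1 and B1. For $\beta(\nu)\le 1/\alpha(\delta)$ you instruct the reader to start ``from $\alpha>\alpha(\delta)$ and the almost-decrease of $\delta_m/m^{\alpha}$'' and run the first template --- but that template consumes a $\nu$-hypothesis, not a $\delta$-hypothesis, so the step as written cannot be executed; the correct input is $\beta<\beta(\nu)$ and the almost-increase of $\nu_p/p^{\beta}$ (this is A2, yielding $\alpha(\delta)\le 1/\beta(\nu)$). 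Dually, $\beta(\nu)\ge 1/\alpha(\delta)$ is B2 and must start from $\alpha>\alpha(\delta)$, not from the almost-increase of $\nu_p/p^{\beta}$ as you state; moreover the evaluation pair for B2 has to be mirrored to $(m,n)=(\lceil\nu_{p+1}\rceil,\lceil\nu_q\rceil+1)$, because there one needs $\delta_m\le p$ and $\delta_n\ge q$, and your pair $(\nu_p+1,\nu_{q+1})$ produces the one-sided bounds pointing the wrong way for that inequality. All four computations do go through once the hypotheses and index pairs are matched correctly, so no idea is missing, but as written two of the four inequalities are derived from assumptions that cannot feed the template you assign to them.
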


{\itshape Note:}

\begin{itemize}
\item[$(i)$] As pointed out in \cite[Sect. 2.1.3, p. 63-64]{dissertationjimenez} and \cite[Remark 3.8]{index}, the aforementioned index shift in the sequences of quotients is not effecting the value of the Matuszewska indices $\alpha(\cdot)$ and $\beta(\cdot)$.

\item[$(ii)$] \eqref{betweenmgdc}, see \cite[(2.11), p. 76]{dissertationjimenez} and which has also appeared due to technical reasons in \cite{whitneyextensionweightmatrix}, is connected to the growth behaviors {\itshape moderate growth} and {\itshape derivation closedness}. More precisely, in \cite[Remark 2.1.36, p. 78]{dissertationjimenez} it has been shown that for log-convex sequences we have
\begin{equation}\label{mgstrangedc}
\hyperlink{mg}{(\on{mg})}\Longrightarrow\eqref{betweenmgdc}\Longrightarrow\hyperlink{dc}{(\on{dc})},
\end{equation}
and each implication cannot be reversed in general.
\end{itemize}

\subsection{Main statements}\label{maindualsection}

First, by applying Theorem \ref{Javithm2143} we immediately get the following statement.

\begin{lemma}\label{dual1}
Let $N\in\hyperlink{LCset}{\mathcal{LC}}$ be given with \eqref{betweenmgdc}. Assume that $N$ satisfies
\begin{equation}\label{dual1equ1}
\exists\;H\ge 1\;\exists\;\beta>1\;\forall\;1\le p\le q:\;\;\;\frac{\nu_p}{p^{\beta}}\le H\frac{\nu_q}{q^{\beta}},
\end{equation}
i.e. the sequence $(\nu_p/p^{\beta})_p$ is almost increasing for some $\beta>1$.

Then the dual sequence $D$ is equivalent to a sequence $L$ such that $L^{*}$ is normalized and log-convex (and $D^{*}$ is equivalent to $L^{*}$, too).
\end{lemma}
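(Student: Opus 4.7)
The plan is to reduce everything to an application of Remark~\ref{almoststuff}$(a)$ to the sequence $D$ itself. The hypothesis on $N$ must first be translated into a growth property of $\delta$ via Theorem~\ref{Javithm2143}, and then upgraded to give the almost-decreasing condition that Remark~\ref{almoststuff}$(a)$ needs.

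First I would observe that \eqref{dual1equ1} is by definition $\beta(\nu)\ge\beta>1$. Together with the standing assumption \eqref{betweenmgdc}, Theorem~\ref{Javithm2143} yields
\[
\alpha(\delta)=\frac{1}{\beta(\nu)}\le\frac{1}{\beta}<1.
\]
Next I would upgrade this to the statement that $(\delta_p/p)_{p\ge1}$ is almost decreasing (not merely $(\delta_p/p^{\alpha})$ for some $\alpha<1$). Pick $\alpha$ with $\alpha(\delta)<\alpha<1$ and $H\ge1$ such that $\delta_q/q^{\alpha}\le H\,\delta_p/p^{\alpha}$ for all $1\le p\le q$. Since $\alpha-1<0$ and $q\ge p$, we get $q^{\alpha-1}\le p^{\alpha-1}$, and therefore
\[
\frac{\delta_q}{q}=\frac{\delta_q}{q^{\alpha}}\,q^{\alpha-1}\le H\frac{\delta_p}{p^{\alpha}}\,q^{\alpha-1}\le H\frac{\delta_p}{p^{\alpha}}\,p^{\alpha-1}=H\frac{\delta_p}{p}.
\]

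With this in hand I would apply Remark~\ref{almoststuff}$(a)$ directly to $D$ (i.e.\ replacing $M,\mu$ there by $D,\delta$), defining
\[
\lambda_p:=H^{-1}p\sup_{q\ge p}\frac{\delta_q}{q},\;\;\;p\ge1,\qquad\lambda_0:=1,\qquad L_p:=\prod_{i=0}^{p}\lambda_i.
\]
The observations collected in that remark then give: $L$ is equivalent to $D$; the map $p\mapsto\lambda_p/p$ is non-increasing so $l$ is log-concave, whence $L^{*}$ is log-convex by $(iv)$ in Section~\ref{sec:propertiesconjugate}; and the equivalence $D^{*}\hyperlink{approx}{\approx}L^{*}$ follows from $(ii)$ in the same section. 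It only remains to check normalization of $L^{*}$, i.e.\ $1=L_0\ge L_1$. But $\nu_1\ge1$ forces $\delta_1=1$ by the very definition of the dual sequence, and then \eqref{almoststuffequ1} gives $L_1=\lambda_1\le\delta_1=1=L_0$, as required.

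The only step that is not a direct quotation of earlier material is the upgrade from $\alpha<1$ to the exponent~$1$ in the second paragraph, and I expect this to be the ``hard'' part of the argument; fortunately, as shown above, it amounts to a one-line monotonicity observation on $q^{\alpha-1}$, so no real obstacle arises.
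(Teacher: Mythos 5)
Your proposal is correct and follows essentially the same route as the paper's proof: translate \eqref{dual1equ1} into $\beta(\nu)>1$, use Theorem~\ref{Javithm2143} to get $\alpha(\delta)<1$, deduce that $(\delta_p/p)_p$ is almost decreasing, and then invoke Remark~\ref{almoststuff}$(a)$ applied to $D$ together with $(ii)$, $(iv)$ of Section~\ref{sec:propertiesconjugate}. The only difference is that you spell out the (correct) one-line monotonicity argument upgrading the exponent $\alpha<1$ to $1$, a step the paper passes over with ``consequently,'' and you skip the paper's harmless case distinction between $H=1$ and $H>1$.
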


\begin{proof}
By assumption we have $\beta(\nu)\ge\beta>1$ and so $\alpha(\delta)<1$ follows by Theorem \ref{Javithm2143}. Consequently, we have that
$$\exists\;H\ge 1\;\forall\;1\le p\le q:\;\;\;\frac{\delta_q}{q}\le H\frac{\delta_p}{p},$$
i.e. $p\mapsto\frac{\delta_p}{p}$ is almost decreasing. If we can choose $H=1$, then we are done with $L\equiv D$ since $d:=(D_p/p!)_{p\in\NN}$ directly is log-concave and so $D^{*}$ is log-convex, see $(iv)$ in Section \ref{sec:propertiesconjugate} and $(a)$ in Lemma \ref{MstarLC}. Note that $D_0=D_1=1$ by definition and so $D^{*}$ is normalized, too.

If $H>1$, then we are applying $(a)$ in Remark \ref{almoststuff} to $M\equiv D$ in order to switch from $D$ to the equivalent sequence $L$ defined via \eqref{almoststuffequ}. Thus $p\mapsto\frac{\lambda_p}{p}$ is non-increasing and hence $l:=(L_p/p!)_{p\in\NN}$ is log-concave which is equivalent to the log-convexity for $L^{*}$. Normalization for $L^{*}$ follows since $D_0=D_1=1$ and finally $D^{*}$ is equivalent to $L^{*}$ which holds by $(ii)$ in Section \ref{sec:propertiesconjugate}.
\end{proof}

\begin{lemma}\label{dual2}
Let $N\in\hyperlink{LCset}{\mathcal{LC}}$ be given with $\lim_{p\rightarrow+\infty}(n_p)^{1/p}=+\infty$. Then we get $\lim_{p\rightarrow+\infty}\delta_p/p=\lim_{p\rightarrow+\infty}(d_p)^{1/p}=0$.
\end{lemma}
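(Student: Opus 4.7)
The plan is to reduce everything to showing that $\nu_p/p \to +\infty$, from which both claims follow by elementary counting and the standard comparison \eqref{mucompare}.

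First I would translate the hypothesis $\lim_{p\to+\infty}(n_p)^{1/p}=+\infty$ into a statement about $\nu_p/p$. Since $N\in\mathcal{LC}$ the sequence $\nu=(\nu_p)_p$ is non-decreasing, which yields $N_p=\prod_{i=1}^{p}\nu_i\le\nu_p^{p}$ and hence $(N_p)^{1/p}\le\nu_p$. Combining this with Stirling's formula, which gives $(p!)^{1/p}\ge p/(2e)$ for all sufficiently large $p$, one obtains
\[
(n_p)^{1/p}=\frac{(N_p)^{1/p}}{(p!)^{1/p}}\le\frac{2e\,\nu_p}{p},
\]
so the assumption forces $\lim_{p\to+\infty}\nu_p/p=+\infty$.

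Next I would show $\delta_p/p\to 0$. By definition of the dual sequence, $\delta_{p+1}=\Sigma_N(p)$ for $p\ge\nu_1$, i.e.\ $\delta_{p+1}=\max\{k\in\NN_{>0}:\nu_k\le p\}$ (with $\delta_{p+1}=0$ if no such $k$ exists). Given $p$ large, set $k:=\delta_{p+1}$. If $k$ stays bounded there is nothing to do; otherwise $\nu_k\le p$ gives
\[
\frac{\delta_{p+1}}{p}=\frac{k}{p}\le\frac{k}{\nu_k},
\]
and the right-hand side tends to $0$ by the first step. Hence $\lim_{p\to+\infty}\delta_p/p=0$.

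Finally, for the second equality I would apply the elementary inequality \eqref{mucompare} to the sequence $d=(d_p)_p$ whose consecutive quotients are exactly $d_p/d_{p-1}=D_p/(p\,D_{p-1})=\delta_p/p$. This yields
\[
\limsup_{p\to+\infty}(d_p)^{1/p}\le\limsup_{p\to+\infty}\frac{\delta_p}{p}=0,
\]
so $\lim_{p\to+\infty}(d_p)^{1/p}=0$, completing the proof. The only slightly delicate point is the first step, since $(n_p)^{1/p}\to+\infty$ only directly controls the geometric mean of $\nu_i/i$; the log-convexity of $N$ (giving $(N_p)^{1/p}\le\nu_p$) is what lets us upgrade this to pointwise divergence of $\nu_p/p$, and everything else is then just bookkeeping with the counting function $\Sigma_N$.
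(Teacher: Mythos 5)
Your proof is correct and follows essentially the same route as the paper: \eqref{mucompare1} plus Stirling to get $\nu_p/p\to+\infty$, inversion via the counting function to get $\delta_p/p\to 0$, and \eqref{mucompare} for the final equality. Your middle step (writing $\delta_{p+1}=k$ with $\nu_k\le p$, so $\delta_{p+1}/p\le k/\nu_k\to 0$) is a cleaner execution than the paper's floor-function bookkeeping, but it is the same underlying idea.
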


\begin{proof}
First, by \eqref{mucompare1} and Stirling's formula we see that $\lim_{p\rightarrow+\infty}(n_p)^{1/p}=+\infty$ implies $\lim_{p\rightarrow+\infty}\nu_p/p=+\infty$ as well.

Let $C\ge 1$ be given, arbitrary but from now on fixed. Then we can find some $p_C\in\NN_{>0}$ such that $\nu_p>pC$ for all $p\ge p_C$ holds true. Since $\lfloor\frac{p}{C}\rfloor\ge\frac{p}{C}-1\ge p_C$ is valid for all $p\in\NN$ with $p\ge Cp_C+C(>p_C)$ we have for all such (large) integers $p$ that
$$\nu_{\lfloor p/C\rfloor}>\lfloor p/C\rfloor C\ge\left(\frac{p}{C}-1\right)C=p-C\ge\frac{p}{2},$$
where the last estimate is equivalent to having $p\ge 2C$ which holds true since $p\ge Cp_C+C\ge C+C=2C$. Consequently, by the definition of the counting function $\Sigma_N$ and the dual sequence we have shown $\Sigma_N(p/2)<\lfloor\frac{p}{C}\rfloor\le\frac{p}{C}$ and so $\delta_{p+1}=\Sigma_N(p)<\frac{2p}{C}$ for all sufficiently large integers $p$. Now, since $C$ can be taken arbitrary large it follows that $\lim_{p\rightarrow+\infty}\delta_p/p=0$.

Finally, since $D\in\hyperlink{LCset}{\mathcal{LC}}$ by \eqref{mucompare} and Stirling's formula we see that $\lim_{p\rightarrow+\infty}\delta_p/p=0$ does imply $\lim_{p\rightarrow+\infty}(d_p)^{1/p}=0$.

\end{proof}

Consequently, when combining Lemmas \ref{dual1} and \ref{dual2} we have that the sequence $L$ defined via \eqref{almoststuffequ} and being equivalent to $D$ has $\lim_{p\rightarrow+\infty}\lambda_p/p=\lim_{p\rightarrow+\infty}(l_p)^{1/p}=0$, too.\vspace{6pt}

Concerning these Lemmas we comment:

\begin{remark}\label{dualremark}
Let $N$ satisfy the assumptions from Lemmas \ref{dual1} and \ref{dual2}. Then we get for the technical sequence $L$ constructed via the dual sequence $D$ the following (see again $(a)$ in Remark \ref{almoststuff} applied to $D$):

\begin{itemize}
\item[$(i)$] $L^{*}\in\hyperlink{LCset}{\mathcal{LC}}$ is valid.

\item[$(ii)$] Since $D$ is log-convex and equivalence between sequences preserves \hyperlink{mg}{$(\on{mg})$}, by $(v)$ in Section \ref{sec:propertiesconjugate} we have that both $D^{*}$ and $L^{*}$ have \hyperlink{mg}{$(\on{mg})$}.

\item[$(iii)$] Moreover, log-convexity for $D$ implies this property for $L$ and, indeed, $L$ satisfies all requirements of sequences belonging to the class \hyperlink{LCset}{$\mathcal{LC}$} except $L_0\le L_1$ because only $\lambda_1\le\delta_1=1$ is known (see \eqref{almoststuffequ1}).

\item[$(iv)$] However, when technically modifying $L$ at the beginning with the following trick one can achieve w.l.o.g. that even $L\in\hyperlink{LCset}{\mathcal{LC}}$:

When $\lambda_1=1$, then no modification is required. So let now $\lambda_1<1$. Since $L$ is log-convex the mapping $p\mapsto\lambda_p$ is non-decreasing and $\lim_{p\rightarrow+\infty}\lambda_p=+\infty$ because $L$ is equivalent to $D$. Thus there exists $p_0\in\NN_{>0}$ (chosen minimal) such that for all $p>p_0$ we have $\lambda_p\ge 1$. Then replace $L$ by $\widetilde{L}$ defined in terms of its quotients $\widetilde{\lambda}_p$, i.e. putting $\widetilde{L}_p=\prod_{i=0}^p\widetilde{\lambda}_i$, where we set
$$\widetilde{\lambda}_p:=1,\;\;\;\text{for}\;0\le p\le p_0,\hspace{15pt}\widetilde{\lambda}_p:=\lambda_p,\;\;\;\text{for}\;p>p_0.$$
Consequently we get: $1=\widetilde{L}_0=\widetilde{L}_1$, $\widetilde{L}$ is log-convex since $p\mapsto\widetilde{\lambda}_p$ is non-decreasing and $L\le\widetilde{L}\le cL$ for some $c\ge 1$ which yields that $\widetilde{L}$ and $L$ are equivalent.

Finally, $\widetilde{l}$ is log-concave since $p\mapsto\frac{\widetilde{\lambda}_p}{p}$ is non-increasing which can be seen as follows: Clearly, $\frac{\widetilde{\lambda}_p}{p}\ge\frac{\widetilde{\lambda}_{p+1}}{p+1}$ for all $1\le p\le p_0-1$ and also for all $p>p_0$ since $l$ is log-concave. Then note that $\frac{1}{p}\ge\frac{\lambda_p}{p}$ for all $1\le p\le p_0$ and so $\frac{\widetilde{\lambda}_{p_0}}{p_0}=\frac{1}{p_0}\ge\frac{\lambda_{p_0}}{p_0}\ge\frac{\lambda_{p_0+1}}{p_0+1}=\frac{\widetilde{\lambda}_{p_0+1}}{p_0+1}$.\vspace{6pt}

Summarizing (see $(a)$ in Remark \ref{almoststuff}) we have that $\widetilde{L},\widetilde{L}^{*}\in\hyperlink{LCset}{\mathcal{LC}}$, $\widetilde{L}$ is equivalent to $D$ and $\widetilde{L}^{*}$ is equivalent to $D^{*}$.
\end{itemize}
\end{remark}

\begin{remark}\label{dualremark1}
By the characterization given in \cite[Thm. 3.11]{index} and \cite[Thm. 3.10]{index}, see also \cite[Prop. 2.1.22, p. 68]{dissertationjimenez} and the discussion after the proof of \cite[Thm. 3.11]{index}, we have the following:

\eqref{dual1equ1}, i.e. $\beta(\nu)>1$, is equivalent to the fact that $N\in\hyperlink{LCset}{\mathcal{LC}}$ has \hyperlink{gamma1}{$(\gamma_1)$} or equivalently \hyperlink{beta1}{$(\beta_1)$}.

Thus $\beta(\nu)>1$ if and only if $N$ is {\itshape strongly non-quasianalytic}.

Recall that \hyperlink{gamma1}{$(\gamma_1)$} for $N$ implies, in particular, that $\lim_{p\rightarrow+\infty}(n_p)^{1/p}=+\infty$.
\end{remark}




Summarizing everything, in particular the information from Lemmas \ref{dual1} and \ref{dual2} and Remark \ref{dualremark}, we get the following main result.

\begin{theorem}\label{dual14}
Let $N\in\hyperlink{LCset}{\mathcal{LC}}$ be given and let $D\in\hyperlink{LCset}{\mathcal{LC}}$ denote the corresponding dual sequence. We assume that:

\begin{itemize}
\item[$(*)$] $\beta(\nu)>1$ holds true, i.e. $N$ is strongly non-quasianalytic and hence $\lim_{p\rightarrow+\infty}(n_p)^{1/p}=+\infty$, and

\item[$(*)$] $N$ satisfies \eqref{betweenmgdc}.
\end{itemize}

Then there exists $L\in\RR_{>0}^{\NN}$ (given by \eqref{almoststuffequ} w.r.t. the sequence $D$) which is equivalent to $D$ and such that $L$ satisfies all requirements in order to apply Theorem \ref{thm:main} to $L$. Moreover, the corresponding isomorphisms are valid for the class defined by $D$ as well (see $(ii)$ in Remark \ref{mainthmremark}) and we also have $\alpha(\delta)=\alpha(\lambda)<1$. Finally, $L$ is log-convex, $D^{*}$ and $L^{*}$ are equivalent and both satisfy \hyperlink{mg}{$(\on{mg})$}.
\end{theorem}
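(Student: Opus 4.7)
The plan is to assemble the preceding lemmas and remarks, the only real work being to verify that each hypothesis of Theorem \ref{thm:main} is met by the sequence $L$ produced from $D$ via \eqref{almoststuffequ}. First, I would invoke Remark \ref{dualremark1}: the assumption $\beta(\nu)>1$ is precisely \hyperlink{gamma1}{$(\gamma_1)$}/\hyperlink{beta1}{$(\beta_1)$} for $N$, and in particular forces $\lim_{p\rightarrow+\infty}(n_p)^{1/p}=+\infty$. Combined with \eqref{betweenmgdc} this means both Lemmas \ref{dual1} and \ref{dual2} are applicable to $N$.

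Next I would combine Theorem \ref{Javithm2143} (whose hypothesis \eqref{betweenmgdc} is at hand) with $\beta(\nu)>1$ to obtain $\alpha(\delta)=1/\beta(\nu)<1$, so that $(\delta_p/p)_{p\ge 1}$ is almost decreasing. This is exactly the input required to run the construction of Remark \ref{almoststuff}$(a)$ with $M\equiv D$, yielding the sequence $L$ defined by \eqref{almoststuffequ} together with the estimates \eqref{almoststuffequ1}. From items $(i)$ to $(v)$ of that remark I would read off successively: $L$ is equivalent to $D$ (hence $L^{*}$ equivalent to $D^{*}$ by $(ii)$ of Section \ref{sec:propertiesconjugate}); $p\mapsto \lambda_p/p$ is non-increasing, so $l$ is log-concave and $L^{*}$ is log-convex; $L$ inherits log-convexity from $D$; and $L_0=1\ge L_1$ follows from $\lambda_1\le\delta_1=1$. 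Lemma \ref{dual2} then gives $\lim_{p\rightarrow+\infty}(d_p)^{1/p}=0$ and, since equivalence between $D$ and $L$ is preserved on the quotient level, also $\lim_{p\rightarrow+\infty}(l_p)^{1/p}=0$.

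At this point every hypothesis of Theorem \ref{thm:main} for $L$ is in place ($L_0=1\ge L_1$, $\lim(l_p)^{1/p}=0$, and $l$ log-concave), so the isomorphisms stated there hold for $\cE_{\{L\}}$ and $\cE_{(L)}$. Because $L$ and $D$ are equivalent, the corresponding ultradifferentiable classes coincide, and by $(ii)$ of Section \ref{sec:propertiesconjugate} the weighted entire classes built from $L^{*}$ coincide with those built from $D^{*}$; thus the isomorphisms automatically transfer to the classes defined by $D$, in accordance with the first bullet of Remark \ref{mainthmremark}$(ii)$.

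It remains to record the three closing assertions. The identity $\alpha(\delta)=\alpha(\lambda)$ is immediate since equivalent weight sequences share their Matuszewska indices (the definition of $\alpha$ is insensitive to multiplication by $p^{1/p}$-bounded factors), and the common value is less than $1$ by Theorem \ref{Javithm2143}. Moderate growth for $D^{*}$ follows from $(v)$ in Section \ref{sec:propertiesconjugate} applied to the log-convex sequence $D$ with $D_0=1$, and passes to $L^{*}$ by equivalence. The only point where care is needed is the bookkeeping between $L$ (which need not lie in $\hyperlink{LCset}{\mathcal{LC}}$ when $\lambda_1<1$) and its modification $\widetilde{L}$ from Remark \ref{dualremark}$(iv)$: but since Theorem \ref{thm:main} does not require $L\in\hyperlink{LCset}{\mathcal{LC}}$ and only uses $L_0=1\ge L_1$, log-concavity of $l$, and the vanishing of $(l_p)^{1/p}$, the modification is not needed for the main conclusion and is only used implicitly in quoting properties of $L^{*}\in\hyperlink{LCset}{\mathcal{LC}}$ via equivalence with $\widetilde{L}^{*}$. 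The main obstacle in writing this out cleanly is ensuring that the normalization and log-convexity conditions travel correctly between $N$, $D$, $L$ and $\widetilde{L}$ without circular reasoning, which is precisely what the careful inequalities \eqref{almoststuffequ1} of Remark \ref{almoststuff} are designed to guarantee.
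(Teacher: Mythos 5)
Your proposal is correct and follows essentially the same route as the paper, whose proof is just a pointer to Lemmas \ref{dual1} and \ref{dual2}, Remark \ref{dualremark} and Section \ref{sec:propertiesconjugate} — you have simply written out that assembly in full. One small caution: the identity $\alpha(\delta)=\alpha(\lambda)$ should not be justified by ``equivalent weight sequences share their Matuszewska indices'' (equivalence of $D$ and $L$ alone does not control the quotient sequences), but by the quotient-level bounds \eqref{almoststuffequ1}, i.e. $H^{-1}\delta_p\le\lambda_p\le\delta_p$, which you already invoke elsewhere and which make almost-decreasingness of $(\lambda_p/p^{\alpha})_p$ and $(\delta_p/p^{\alpha})_p$ equivalent.
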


\begin{proof}
This follows directly by involving Lemmas \ref{dual1} and \ref{dual2}, Remark \ref{dualremark} and the comments listed in Section \ref{sec:propertiesconjugate}.
\end{proof}

\begin{corollary}\label{dual14cor}
Let $N\in\RR_{>0}^{\NN}$ satisfy the following conditions:
\begin{itemize}
\item[$(*)$] $n\in\hyperlink{LCset}{\mathcal{LC}}$,

\item[$(*)$] \hyperlink{gamma1}{$(\gamma_1)$}, and

\item[$(*)$] \hyperlink{mg}{$(\on{mg})$}.
\end{itemize}
Then Theorem \ref{dual14} can be applied to $N$.
\end{corollary}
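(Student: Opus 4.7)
The plan is simply to verify that the three hypotheses of Theorem \ref{dual14} hold under the assumptions of the corollary; no new technical work is needed beyond combining observations already made in the excerpt. The proof will be very short and the main (minor) obstacle is a small bookkeeping issue: Theorem \ref{dual14} is stated for $N\in\mathcal{LC}$, whereas the corollary hypothesizes $n\in\mathcal{LC}$, so I first have to check that the latter implies the former.

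First, I would observe that $n\in\mathcal{LC}$ implies $N\in\mathcal{LC}$. Normalization transfers verbatim since $N_0=n_0=1$ and $N_1=n_1\ge 1$. Log-convexity transfers because
\[
N_p^2=(p!)^2 n_p^2\le (p!)^2 n_{p-1}n_{p+1}=\frac{p}{p+1}N_{p-1}N_{p+1}\le N_{p-1}N_{p+1},
\]
and $(N_p)^{1/p}=(n_p)^{1/p}(p!)^{1/p}\to+\infty$ since both factors tend to infinity (the second by Stirling).

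Next, I would quote Remark \ref{dualremark1}: for $N\in\mathcal{LC}$ the condition \hyperlink{gamma1}{$(\gamma_1)$} is equivalent to $\beta(\nu)>1$, i.e.\ to strong non-quasianalyticity. So the second assumption of Theorem \ref{dual14} (in particular also the implication $\lim (n_p)^{1/p}=+\infty$) is verified.

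Finally, since $N$ is log-convex (by the first step), the implication \eqref{mgstrangedc} gives that \hyperlink{mg}{$(\on{mg})$} implies \eqref{betweenmgdc}, which is the remaining hypothesis of Theorem \ref{dual14}. All three assumptions of Theorem \ref{dual14} are thereby verified and the corollary follows. The only point that requires any thought is the observation in the first step that the corollary's slightly stronger hypothesis $n\in\mathcal{LC}$ (rather than $N\in\mathcal{LC}$) is harmless and indeed implies $N\in\mathcal{LC}$.
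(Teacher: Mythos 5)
Your proposal is correct and follows essentially the same route as the paper, which simply notes that $(\on{mg})$ implies \eqref{betweenmgdc} via \eqref{mgstrangedc} and that the remaining hypotheses of Theorem \ref{dual14} follow immediately (via Remark \ref{dualremark1} for the translation of $(\gamma_1)$ into $\beta(\nu)>1$). Your extra verification that $n\in\mathcal{LC}$ implies $N\in\mathcal{LC}$ — needed both for Theorem \ref{dual14} itself and to legitimize the use of \eqref{mgstrangedc} — is a worthwhile explicit check that the paper leaves implicit, and your computation there is correct.
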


\begin{proof}
By \eqref{mgstrangedc} we get that \hyperlink{mg}{$(\on{mg})$} implies \eqref{betweenmgdc}, the other assertions follow immediately.
\end{proof}

Similarly, the above results can be used to construct sequences $L^1$, $L^2$ for which Theorem \ref{equivalencesmallsequthm} applies.\vspace{6pt}

{\itshape Note:}

\begin{itemize}
\item[$(*)$] A sequence $N$ satisfies the assertions listed in Corollary \ref{dual14cor} if and only if $n$ is formally a so-called {\itshape strongly regular sequence} in the notion of \cite[Sect. 1.1]{Thilliezdivision}. The sequence $M$ in \cite{Thilliezdivision} is precisely denoting $m$ in the notation used in this work.

\item[$(*)$] Corollary \ref{dual14cor} applies to $N\equiv G^s$ for any $s>1$. On the other hand Theorem \ref{dual14} also applies to the so-called $q$-Gevrey sequences given by $M^q:=(q^{p^2})_{p\in\NN}$ with $q>1$. Each $M^q$ violates \hyperlink{mg}{$(\on{mg})$} but \eqref{betweenmgdc} is satisfied.
\end{itemize}

We also have the following result which shows how \eqref{om1} can be obtained for the dual sequence $D$ (and for $L$). This is crucial when $D$ (resp. $L$) shall belong to a family $\fF$ as considered in Section \ref{boundednesssection}.

\begin{proposition}\label{dual15}
Let $N\in\hyperlink{LCset}{\mathcal{LC}}$ be given, let $D\in\hyperlink{LCset}{\mathcal{LC}}$ denote the corresponding dual sequence and let $L$ given by \eqref{almoststuffequ} w.r.t. $D$. We assume that $N$ is also having
\begin{itemize}
\item[$(*)$] $\alpha(\nu)<+\infty$.
\end{itemize}
Then $\beta(\delta)=\beta(\lambda)>0$ and both $\omega_D$ and $\omega_L$ satisfy \eqref{om1}.
\end{proposition}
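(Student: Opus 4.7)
My plan is to translate the hypothesis $\alpha(\nu)<+\infty$, via Theorem \ref{Javithm2143}, into the positivity $\beta(\delta)>0$, and then feed this into Lemma \ref{betaindexlemma} together with the characterization in $(e)$ of Section \ref{Matuszeskasection} (based on \cite[Thm. 3.1]{subaddlike}) to conclude \eqref{om1} for the associated weight functions. The only real bookkeeping is the passage from $L$ (which may fail normalization) to the corrected sequence $\widetilde{L}\in\hyperlink{LCset}{\mathcal{LC}}$ from Remark \ref{dualremark}(iv).

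First, I would check applicability of Theorem \ref{Javithm2143}. Choosing $\alpha>\alpha(\nu)$ finite and comparing $\nu_p/p^{\alpha}$ at consecutive indices gives $\nu_{p+1}\le H\,(\tfrac{p+1}{p})^{\alpha}\nu_p\le A\nu_p$, which is exactly \eqref{betweenmgdc}. Moreover, $N\in\hyperlink{LCset}{\mathcal{LC}}$ forces $\nu_p\to+\infty$; an unbounded almost-decreasing positive sequence being impossible, this rules out $\alpha(\nu)=0$. Hence Theorem \ref{Javithm2143} delivers $\beta(\delta)=1/\alpha(\nu)\in(0,+\infty)$. Next, \eqref{almoststuffequ1} applied with $M\equiv D$ yields $H^{-1}\delta_p\le\lambda_p\le\delta_p$ for all $p\ge 1$. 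Pointwise equivalence of positive sequences preserves both Matuszewska indices, so $\beta(\lambda)=\beta(\delta)>0$, which is the first assertion.

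To obtain \eqref{om1} for $\omega_D$, I would apply Lemma \ref{betaindexlemma} to $D\in\hyperlink{LCset}{\mathcal{LC}}$: since $\beta(\delta)>0$, we get $\beta(\rho_D)\ge\beta(\delta)>0$, where $(\rho_D)_p:=(D_p)^{1/p}$. The characterization recalled in $(e)$ of Section \ref{Matuszeskasection} then translates this into $\alpha(\omega_D)<+\infty$, i.e. \eqref{om1} for $\omega_D$.

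Finally, for $\omega_L$, the obstacle is that $L$ need not satisfy $L_0\le L_1$, so $L\notin\hyperlink{LCset}{\mathcal{LC}}$ and Lemma \ref{betaindexlemma} does not apply directly. I would circumvent this via the explicit correction $\widetilde{L}\in\hyperlink{LCset}{\mathcal{LC}}$ constructed in Remark \ref{dualremark}(iv), for which $L\le\widetilde{L}\le cL$ pointwise. This pointwise sandwich gives $|\omega_L-\omega_{\widetilde{L}}|\le\log(c)$, so \eqref{om1} is equivalent for $\omega_L$ and $\omega_{\widetilde{L}}$. Since $\widetilde{\lambda}_p=\lambda_p$ for all $p>p_0$, the tails of the quotient sequences agree and $\beta(\widetilde{\lambda})=\beta(\lambda)>0$; applying Lemma \ref{betaindexlemma} and $(e)$ of Section \ref{Matuszeskasection} to $\widetilde{L}$ therefore closes the argument. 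The main (mild) subtlety is precisely this normalization bookkeeping for $L$; all other steps are direct invocations of the cited results.
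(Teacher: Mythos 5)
Your argument is correct in substance and shares its backbone with the paper's proof: establish \eqref{betweenmgdc}, invoke Theorem \ref{Javithm2143} to obtain $\beta(\delta)>0$, and then convert this into \eqref{om1}. You deviate in three places. First, you derive \eqref{betweenmgdc} directly from the almost-decreasing property of $p\mapsto\nu_p/p^{\alpha}$ for some finite $\alpha>\alpha(\nu)$, whereas the paper first upgrades $\alpha(\nu)<+\infty$ to \hyperlink{mg}{$(\on{mg})$} via \cite[Thm. 3.16, Cor. 3.17]{index} and then uses \eqref{mgstrangedc}; your route is more elementary and equally valid. Second, for $\beta(\delta)>0\Rightarrow\eqref{om1}$ you go through Lemma \ref{betaindexlemma} (giving $\beta(\rho_D)\ge\beta(\delta)>0$ for the auxiliary sequence $\rho_D$) and the characterization recalled in $(e)$ of Section \ref{Matuszeskasection}, i.e. \cite[Thm. 3.1]{subaddlike}; the paper instead passes through the growth index $\gamma(D)>0$ and \cite[Cor. 4.6, Cor. 2.14]{index}. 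Both are legitimate, and yours has the merit of relying only on material proved or explicitly quoted in this paper. Third, for $\omega_L$ you detour through the normalized modification $\widetilde{L}$; this works, but it is unnecessary: since $H^{-p}D_p\le L_p\le D_p$, one has $\omega_D(t)\le\omega_L(t)\le\omega_D(Ht)$, and \eqref{om1} transfers directly from $\omega_D$ to $\omega_L$ by iteration, which is how the paper argues.

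One side remark in your write-up is incorrect, though harmless. You claim that $\nu_p\to+\infty$ rules out $\alpha(\nu)=0$. But $\alpha(\nu)$ is an infimum that need not be attained, so $\alpha(\nu)=0$ does not force $\nu$ itself to be almost decreasing; e.g. $\nu_p$ growing like $\log p$ yields $\alpha(\nu)=0$ together with $\nu_p\to+\infty$. Fortunately nothing is lost: Theorem \ref{Javithm2143} gives $\beta(\delta)=1/\alpha(\nu)>0$ in any case (with $\beta(\delta)=+\infty$ when $\alpha(\nu)=0$), and $\beta(\delta)>0$ is all that the statement asserts and all that the remainder of your argument uses.
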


\begin{proof}
First, by \cite[Thm. 3.16, Cor. 3.17]{index} we know that $\alpha(\nu)<+\infty$ implies (in fact it is even equivalent to) \hyperlink{mg}{$(\on{mg})$}. Consequently, also \eqref{betweenmgdc} holds true, see \eqref{mgstrangedc}. Second, using these facts Theorem \ref{Javithm2143} implies that $\beta(\delta)>0$. Then, by \cite[Thm. 3.11 $(vii)\Leftrightarrow(viii)$]{index} (applied to $\beta=0$) we get $\gamma(D)>0$ as well (for the definition and the study of this growth index $\gamma(\cdot)$ for weight sequences we refer to \cite[Sect. 3.1]{index}). By combining \cite[Cor. 4.6 $(i)$]{index} and \cite[Cor. 2.14]{index} (applied to $\sigma:=\omega_D$) we have that $\omega_D$ satisfies \eqref{om1} and this condition is abbreviated by $(\omega_1)$ in \cite{index}. Finally, the equivalence between $D$ and $L$ clearly preserves \eqref{om1} for $\omega_L$ by definition of the associated weight functions and the equivalence \cite[Thm. 3.1 $(ii)\Leftrightarrow(iii)$]{subaddlike} applied to the sequence $D$.
\end{proof}

Let us combine now Theorem \ref{dual14} and Proposition \ref{dual15}:

\begin{theorem}\label{dual16}
Let $N\in\hyperlink{LCset}{\mathcal{LC}}$ be given, let $D\in\hyperlink{LCset}{\mathcal{LC}}$ denote the corresponding dual sequence and let $L$ be given by \eqref{almoststuffequ} w.r.t. $D$. We assume that $N$ also satisfies
\begin{itemize}
\item[$(*)$] $1<\beta(\nu)\le\alpha(\nu)<+\infty$.
\end{itemize}

Then $L$ is a sequence such that $\lim_{p\rightarrow+\infty}(l_p)^{1/p}= 0$, $(ii)$ and $(iv)$ in Section \ref{answersection} and all requirements from $(i)$ there except $L_0\le L_1$. However, in view of $(iv)$ in Remark \ref{dualremark} also $(i)$ from Section \ref{answersection} can be obtained when passing to $\widetilde{L}$.
\end{theorem}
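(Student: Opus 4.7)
The plan is to derive the theorem as a direct corollary by assembling Theorem \ref{dual14}, Proposition \ref{dual15} and the surgical modification from Remark \ref{dualremark}(iv); the only substantive work is checking that every set of hypotheses is in place under the single assumption $1 < \beta(\nu) \le \alpha(\nu) < +\infty$.

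First I would verify Theorem \ref{dual14} applies. The hypothesis $\beta(\nu) > 1$ is given. The second hypothesis \eqref{betweenmgdc} is obtained via the chain
\[
\alpha(\nu) < +\infty \Longrightarrow \hyperlink{mg}{(\on{mg})} \text{ for } N \Longrightarrow \eqref{betweenmgdc},
\]
where the first implication is \cite[Thm.~3.16, Cor.~3.17]{index} and the second is \eqref{mgstrangedc}. The conclusion of Theorem \ref{dual14} then delivers in one package that $L$ is equivalent to $D$, that $L$ is log-convex with $L_0=1$ and $\lim_{p\to+\infty}(L_p)^{1/p}=+\infty$ (the last property being inherited from $D \in \mathcal{LC}$), that $l$ is log-concave, and that $\lim_{p\to+\infty} (l_p)^{1/p}=0$. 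This already yields property $(iv)$ in Section \ref{answersection} as well as every assertion in $(i)$ there except possibly the sharper normalization $L_0 \le L_1$ --- indeed \eqref{almoststuffequ1} only guarantees $\lambda_1 \le \delta_1 = 1$, so $\lambda_1 < 1$ may occur.

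Next I would invoke Proposition \ref{dual15}, whose single hypothesis $\alpha(\nu) < +\infty$ is part of our assumption, to conclude that $\omega_L$ satisfies \eqref{om1}. For a single sequence $L$, condition \eqref{Rmixedom1} reduces (with the trivial choice $M=N=L$) to \eqref{om1} for $L$, so property $(ii)$ of Section \ref{answersection} is thereby secured.

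Finally, should $L_1 < 1$ occur, I would pass to the modified sequence $\widetilde{L}$ constructed in Remark \ref{dualremark}(iv): replacing finitely many initial quotients of $L$ by $1$ preserves log-convexity, forces $1 = \widetilde{L}_0 = \widetilde{L}_1$, and yields $\widetilde{L}$ equivalent to $L$ with $\widetilde{l}$ still log-concave (the non-trivial step, already carried out in that remark). Equivalence of weight sequences makes associated weight functions comparable up to a dilation of the argument, hence preserves \eqref{om1}, so $\widetilde{L}$ inherits all of $(i), (ii), (iv)$ from Section \ref{answersection}. The entire argument is pure bookkeeping and I expect no real obstacle; the only delicate point worth flagging is that equivalence of weight sequences does \emph{not} preserve log-convexity in general, so both log-convexity of $L$ and that of $\widetilde{L}$ must be read off from the explicit constructions in \eqref{almoststuffequ} and Remark \ref{dualremark}(iv) respectively, rather than inferred directly from the log-convexity of $D$.
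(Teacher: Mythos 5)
Your proposal is correct and follows exactly the route the paper intends: Theorem \ref{dual16} is stated there without a written proof as the combination of Theorem \ref{dual14} (whose hypothesis \eqref{betweenmgdc} you rightly extract from $\alpha(\nu)<+\infty$ via \hyperlink{mg}{$(\on{mg})$} and \eqref{mgstrangedc}), Proposition \ref{dual15} for \eqref{om1}, and Remark \ref{dualremark}$(iv)$ for the passage to $\widetilde{L}$. Your closing caveat about reading log-convexity of $L$ and $\widetilde{L}$ off the explicit constructions rather than from equivalence is a valid and worthwhile point of care.
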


{\itshape Note:} By applying the technical Proposition \ref{prop:uniformbound} it is possible that, when given a one-parameter family of sequences $N^{(\beta)}$, $\beta>0$, and having the requirements from Theorem \ref{dual16}, to construct from the corresponding family $\mathcal{L}:=\{L^{(\beta)}: \beta>0\}$ (resp. $\widetilde{\mathcal{L}}:=\{\widetilde{L}^{(\beta)}: \beta>0\}$) a technical uniform bound $\mathbf{a}$ as required in Section \ref{boundednesssection} and hence to apply Theorem \ref{finalmarkinthm} to $\mathcal{L}$ (resp. to $\widetilde{\mathcal{L}}$).

\subsection{The bidual sequence}
The goal of this final Section is to show how the procedure from Section \ref{maindualsection} can be reversed in a canonical way. Let us first recall: For any $N\in\hyperlink{LCset}{\mathcal{LC}}$ we have that the corresponding dual sequence $D\in\hyperlink{LCset}{\mathcal{LC}}$ and so in \cite[Definition 2.1.41, p. 81]{dissertationjimenez} the following natural definition has been given:
\begin{equation}\label{bidualdef}
\forall\;p\ge\delta_1=1:\;\;\;\epsilon_{p+1}:=\Sigma_D(p),\hspace{20pt}\epsilon_0=\epsilon_1:=1,
\end{equation}
and set $E_p:=\prod_{i=1}^p\epsilon_i$. Finally we put $E_0:=1$ and so $E\in\hyperlink{LCset}{\mathcal{LC}}$ with $1=E_0=E_1$ follows by definition. This sequence $E=(E_p)_{p\in\NN}$ is called the {\itshape bidual sequence} of $N$ and in \cite[Theorem 2.1.42, p. 81]{dissertationjimenez} it has been proven that $N$ and $E$ are equivalent. (In fact there even a slightly stronger equivalence on the level of the corresponding quotient sequences has been established.)

We prove now converse versions of Lemmas \ref{dual1} and \ref{dual2}.

\begin{lemma}\label{dual4}
Let $D\in\hyperlink{LCset}{\mathcal{LC}}$ be given with $\alpha(\delta)<1$.

Then the (bi)-dual sequence $E$ defined via \eqref{bidualdef} has \eqref{dual1equ1} for some $\beta>1$ (and so $E$ is strongly non-quasianalytic).
\end{lemma}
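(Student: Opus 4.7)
The strategy is to reverse the roles played by $N$ and $D$ in Theorem \ref{Javithm2143}: that theorem relates the indices of a sequence to those of its dual, so applying it with $D$ in the role of ``$N$'' and $E$ in the role of ``$D$'' will translate information about $\alpha(\delta)$ into information about $\beta(\epsilon)$. Concretely, I would aim to identify
\[
\beta(\epsilon)=\frac{1}{\alpha(\delta)},
\]
from which the hypothesis $\alpha(\delta)<1$ immediately yields $\beta(\epsilon)>1$; any choice of $\beta\in(1,\beta(\epsilon))$ then supplies the almost-increasing estimate \eqref{dual1equ1} for $\epsilon$, and strong non-quasianalyticity of $E$ follows from Remark \ref{dualremark1}.

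First I would verify that Theorem \ref{Javithm2143} is actually applicable to $D$, i.e.\ that $D$ satisfies \eqref{betweenmgdc}. Since $\alpha(\delta)<1$, in particular $\alpha(\delta)<+\infty$, and $D\in\hyperlink{LCset}{\mathcal{LC}}$. As recalled in comment $(c)$ of Section \ref{Matuszeskasection} (relying on \cite[Thm.~3.16, Cor.~3.17]{index}), for a log-convex sequence the finiteness of the upper Matuszewska index of the quotient sequence is equivalent to moderate growth; hence $D$ has \hyperlink{mg}{$(\on{mg})$}. Invoking \eqref{mgstrangedc} gives \eqref{betweenmgdc} for $D$, which is exactly the hypothesis needed.

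Next, applying Theorem \ref{Javithm2143} to $D$ in place of $N$ (so that $\nu$ is replaced by $\delta$ and the dual $\delta$ is replaced by $\epsilon$ via the very definition \eqref{bidualdef} of $E$), one obtains
\[
\alpha(\delta)=\frac{1}{\beta(\epsilon)},\qquad \beta(\delta)=\frac{1}{\alpha(\epsilon)}.
\]
The first identity is the one I need. From $\alpha(\delta)<1$ it follows that $\beta(\epsilon)>1$, and by the very definition of the lower Matuszewska index, for any fixed $\beta\in(1,\beta(\epsilon))$ the sequence $p\mapsto\epsilon_p/p^{\beta}$ is almost increasing, i.e.\ there exists $H\ge 1$ with $\epsilon_p/p^{\beta}\le H\,\epsilon_q/q^{\beta}$ for $1\le p\le q$. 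This is precisely \eqref{dual1equ1} for $E$; by Remark \ref{dualremark1} it is equivalent to $E$ being strongly non-quasianalytic, finishing the proof.

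The only nontrivial step is the verification that $D$ fulfils \eqref{betweenmgdc}, since the $\alpha$-index bounds $\delta_p$ only polynomially and a priori says nothing about the ratio $\delta_{p+1}/\delta_p$; one has to invoke the characterization of \hyperlink{mg}{$(\on{mg})$} in terms of the upper Matuszewska index to bridge this gap. Once that is in hand, the rest is a direct bookkeeping use of Theorem \ref{Javithm2143}.
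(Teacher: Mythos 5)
Your argument is correct and follows essentially the same route as the paper: the paper also deduces \eqref{betweenmgdc} for $D$ from $\alpha(\delta)<+\infty$ via moderate growth (as in the proof of Proposition \ref{dual15}) and then applies Theorem \ref{Javithm2143} to $D$ and its dual $E$ to get $\beta(\epsilon)=1/\alpha(\delta)>1$, which is \eqref{dual1equ1}. No gaps.
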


\begin{proof}
Since $\alpha(\delta)<1$ we have that $D$ satisfies \eqref{betweenmgdc}, see the proof of Proposition \ref{dual15}. Thus $\beta(\epsilon)>1$ follows by Theorem \ref{Javithm2143} and so, for some $\beta>1$, we have
$$\exists\;H\ge 1\;\forall\;1\le p\le q:\;\;\;\frac{\epsilon_p}{p^{\beta}}\le H\frac{\epsilon_q}{q^{\beta}},$$
i.e. $p\mapsto\frac{\epsilon_p}{p^{\beta}}$ is almost increasing.
\end{proof}

\begin{lemma}\label{dual5}
Let $D\in\hyperlink{LCset}{\mathcal{LC}}$ be given with $\lim_{p\rightarrow+\infty}\delta_p/p=0$ (resp. equivalently $\lim_{p\rightarrow+\infty}(d_p)^{1/p}=0$). Then the dual sequence $E$ satisfies $\lim_{p\rightarrow+\infty}\epsilon_p/p=\lim_{p\rightarrow+\infty}(e_p)^{1/p}=+\infty$.
\end{lemma}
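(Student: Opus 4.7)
The plan is to mirror the argument of Lemma \ref{dual2}, swapping the roles of decay and growth: a decay hypothesis on the quotients $\delta_p/p$ translates, through the counting function $\Sigma_D$ that defines $\epsilon$, into a growth statement for $\epsilon_p/p$.

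First, I would translate the hypothesis quantitatively. For any $C\ge 1$, the assumption $\lim_{p\rightarrow+\infty}\delta_p/p=0$ yields some $N=N(C)\in\NN_{>0}$ such that $\delta_q\le q/C$ for every $q\ge N$. Since $D\in\hyperlink{LCset}{\mathcal{LC}}$, the quotient sequence $\delta$ is non-decreasing, which makes $\Sigma_D$ amenable to lower bounds via a single ``threshold index''.

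Second, I would produce a lower bound for $\Sigma_D(p)$. Fix $p$ large enough so that $\lfloor Cp\rfloor\ge N$, and set $q^{*}:=\lfloor Cp\rfloor$. Then $\delta_{q^{*}}\le q^{*}/C\le p$, and by monotonicity of $q\mapsto\delta_q$ every $q\in\{1,\dots,q^{*}\}$ satisfies $\delta_q\le p$. Hence
$$\Sigma_D(p)\ge\lfloor Cp\rfloor\ge Cp-1.$$
By \eqref{bidualdef} we have $\epsilon_{p+1}=\Sigma_D(p)$, so $\epsilon_{p+1}/p\ge C-1/p$ for all sufficiently large $p$. Since $C\ge 1$ was arbitrary, this gives $\lim_{p\rightarrow+\infty}\epsilon_p/p=+\infty$.

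Third, I would pass from $\epsilon_p/p\to+\infty$ to $(e_p)^{1/p}\to+\infty$ exactly as in the closing lines of Lemma \ref{dual2}. The sequence $e=(e_p)_{p\in\NN}$ with $e_p=E_p/p!$ has quotient sequence $(\epsilon_p/p)_{p\ge 1}$, so applying \eqref{mucompare} to $e$ yields
$$\liminf_{p\rightarrow+\infty}\frac{\epsilon_p}{p}\le\liminf_{p\rightarrow+\infty}(e_p)^{1/p}\le\limsup_{p\rightarrow+\infty}(e_p)^{1/p}\le\limsup_{p\rightarrow+\infty}\frac{\epsilon_p}{p},$$
which, combined with the already established divergence of $\epsilon_p/p$, forces $\lim_{p\rightarrow+\infty}(e_p)^{1/p}=+\infty$. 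The equivalence of the two conclusions is thus immediate, and no step is genuinely hard: the only subtle point is the index bookkeeping in the threshold argument ensuring that $q^{*}=\lfloor Cp\rfloor\ge N$, which is handled by restricting to $p\ge(N+1)/C$.
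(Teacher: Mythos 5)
Your proof is correct and follows essentially the same route as the paper's: both arguments turn the decay $\delta_q/q\to 0$ into a lower bound $\Sigma_D(p)\ge Cp-O(1)$ via a threshold index and the monotonicity of $\delta$, and then pass from $\epsilon_p/p\to+\infty$ to $(e_p)^{1/p}\to+\infty$ using \eqref{mucompare}. The only cosmetic difference is that the paper takes $C\in\NN_{>0}$ to avoid the floor function, whereas you work with $\lfloor Cp\rfloor$ directly.
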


\begin{proof}
First, $\lim_{p\rightarrow+\infty}\delta_p/p=0$ if and only if $\lim_{p\rightarrow+\infty}(d_p)^{1/p}=0$ holds by \eqref{mucompare}.

Let $C\ge 1$ be given, arbitrary but from now on fixed and w.l.o.g. we can take $C\in\NN_{>0}$. Then we find some $p_C\in\NN_{>0}$ such that $\delta_p\le pC^{-1}$ for all $p\ge p_C$ holds true. For all such (large) integers $p$ we also have $pC\ge p_C$ and so $\delta_{pC}\le(pC)C^{-1}=p$ for all $p\ge p_C$. By definition, since $\epsilon_{p+1}=\Sigma_D(p)=|\{j\in\NN_{>0}: \delta_j\le p\}|$ and $j\mapsto\delta_j$ is non-decreasing, we get now $\epsilon_{p+1}\ge pC\Leftrightarrow\frac{\epsilon_{p+1}}{p}\ge C$ for all $p\ge p_C$. Thus we are done because $C$ is arbitrary (large).
\end{proof}

Finally we get the following main result.

\begin{theorem}\label{dualdual}
Let $D\in\hyperlink{LCset}{\mathcal{LC}}$ be given with $1=D_0=D_1$ and assume that

\begin{itemize}
\item[$(*)$] $\alpha(\delta)<1$.
\end{itemize}

Then one can apply Theorem \ref{thm:main} to the sequence $L$ given by \eqref{almoststuffequ} and, in addition, the isomorphisms from Theorem \ref{thm:main} hold for the classes defined via $D$ too (by Remark \ref{mainthmremark}). The corresponding dual sequence $E\in\hyperlink{LCset}{\mathcal{LC}}$ (see \eqref{bidualdef}) is strong non-quasianalytic.
\end{theorem}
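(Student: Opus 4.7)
The plan is to reduce the statement to three ingredients already established: Theorem \ref{thm:main} (applied to $L$), Remark \ref{mainthmremark}$(ii)$ (which says equivalence of weight sequences preserves both sides of the isomorphism), and Lemma \ref{dual4} (for the strong non-quasianalyticity of $E$). The last ingredient is immediate: since $D\in\hyperlink{LCset}{\mathcal{LC}}$ with $\alpha(\delta)<1$, Lemma \ref{dual4} applied to $D$ yields \eqref{dual1equ1} for $E$ with some $\beta>1$, which by Remark \ref{dualremark1} is equivalent to $E$ being strongly non-quasianalytic.

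The first step is to verify that the construction \eqref{almoststuffequ} actually applies to $D$, i.e. that $(\delta_p/p)_{p\ge 1}$ is almost decreasing. By definition of $\alpha(\delta)$, there exist $\alpha_0<1$ and $H_0\ge 1$ such that $\delta_q/q^{\alpha_0}\le H_0\delta_p/p^{\alpha_0}$ for all $1\le p\le q$. Then
\[
\frac{\delta_q}{q}\le H_0\frac{\delta_p}{p}\left(\frac{q}{p}\right)^{\alpha_0-1}\le H_0\frac{\delta_p}{p},
\]
where the last inequality uses $q\ge p$ and $\alpha_0-1<0$. Thus Remark \ref{almoststuff}$(a)$ produces an equivalent sequence $L$ with $l$ log-concave, $L_0=1\ge L_1$ (using $\delta_1=1$, which is forced by $D_0=D_1=1$), and $L$ log-convex (inheriting log-convexity from $D$ via part $(v)$ of Remark \ref{almoststuff}).

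To apply Theorem \ref{thm:main} to $L$, there remains to check $\lim_{p\rightarrow+\infty}(l_p)^{1/p}=0$. Setting $p=1$ in the almost-decreasing bound with exponent $\alpha_0$ and $\delta_1=1$ gives $\delta_q\le H_0q^{\alpha_0}$, hence $\delta_q/q\le H_0q^{\alpha_0-1}\rightarrow 0$. Since $d_p/d_{p-1}=\delta_p/p$, the limit form of the ratio test yields $(d_p)^{1/p}\rightarrow 0$, and the equivalence $L\hyperlink{approx}{\approx}D$ transfers this to $(l_p)^{1/p}\rightarrow 0$. Hence Theorem \ref{thm:main} applies to $L$, delivering the isomorphisms $\cE_{\{L\}}(I,H)\cong\cH^\oo_{\underline{\cL^*}_\fc}(\C,H)$ and $\cE_{(L)}(I,H)\cong\cH^\oo_{\overline{\cL^*}_\fc}(\C,H)$.

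Finally, the transfer to the classes defined via $D$ is exactly the content of Remark \ref{mainthmremark}$(ii)$: the equivalence $L\hyperlink{approx}{\approx}D$ yields $\cE_{[L]}=\cE_{[D]}$ as locally convex spaces, while $(ii)$ of Section \ref{sec:propertiesconjugate} ensures $L^{*}\hyperlink{approx}{\approx}D^{*}$, and by \cite[Prop. 3.8]{inclusion} this equivalence preserves the weighted entire function spaces. I do not expect any serious obstacle here; the only mildly delicate point is translating the Matuszewska-index hypothesis $\alpha(\delta)<1$ into both the almost-decreasing condition for $(\delta_p/p)$ and the limit $\lim_p\delta_p/p=0$, but both reduce to applying the index bound at $p=1$ and at general $p$.
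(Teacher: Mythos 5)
Your proposal is correct and follows essentially the same route as the paper: the paper's proof simply cites comment $(b)$ in Section \ref{Matuszeskasection} (which itself rests on translating $\alpha(\delta)<1$ into the almost-decreasing property of $(\delta_p/p)_p$ and invoking Remark \ref{almoststuff}$(a)$ together with Remark \ref{mainthmremark}$(ii)$) plus Lemma \ref{dual4} for the strong non-quasianalyticity of $E$. You have merely unpacked those citations explicitly — including the correct verification of $\lim_{p\to+\infty}(l_p)^{1/p}=0$ via $\delta_q\le H_0q^{\alpha_0}$ and the ratio test — so there is nothing to add.
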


\begin{proof}
The first part holds by comment $(b)$ in Section \ref{Matuszeskasection} applied to $D$ (even under more general assumptions on the given sequence). The strong non-quasianalyticity for $E$ follows from Lemma \ref{dual4}.
\end{proof}


\end{document}